\newtheorem{thm}{Theorem}[section]
\newtheorem{prp}[thm]{Proposition}
\theoremstyle{definition}
\newtheorem{dfn}[thm]{Definition}
\newtheorem{exa}[thm]{Example}
\theoremstyle{remark}
\newtheorem{rem}{Remark}
\def\dist{\operatorname{d}}
\def\epsilon{\varepsilon}
\def\GL{\operatorname{GL}}
\def\Card{\operatorname{Card}}
\def\AAA{\mathcal{A}}
\def\BBB{\mathcal{B}}
\def\NN{\mathbb{N}}
\def\ZZ{\mathbb{Z}}
\def\RR{\mathbb{R}}
\def\TT{\mathbb{T}}
\title{$S$-adic expansions: a combinatorial, arithmetic and geometric approach}
\author{Val\'erie \textsc{Berth\'e\footnote{LIAFA, Universit\'e Paris Diderot
Paris 7 - Case 7014
F-75205 Paris Cedex 13, France.\newline e-mail: \texttt{berthe@liafa.univ-paris-diderot.fr}}\,}
 and Vincent \textsc{Delecroix}\footnote{IMJ, Universit\'e Paris Diderot
Paris 7 - Case 7012
F-75205 Paris Cedex 13, France.\newline e-mail: \texttt{delecroix@math.univ-paris-diderot.fr}}}
\begin{document}
\title{Beyond substitutive dynamical systems: $S$-adic expansions}
\maketitle

\begin{abstract} 
An $S$-adic expansion of an infinite word is a way of writing it as the limit of an infinite product of substitutions (i.e., morphisms of a free monoid). Such a description is related to continued fraction expansions of numbers and vectors. Indeed, with a word is naturally associated, whenever it exists, the vector of frequencies of its letters. A fundamental example of this relation is between Sturmian sequences and regular continued fractions. In this survey paper, we study $S$-adic words from different perspectives, namely word combinatorics, ergodic theory, and Diophantine approximation, 
 by stressing the parallel with continued fraction expansions.
\end{abstract} 

\tableofcontents

\section{Introduction}
Highly ordered structures may be obtained by the following procedure: start from a pattern of basic elements (this can be tiles or words, for instance), and repeatedly replace each basic element by a union of them according to a fixed rule. Consider for example the Fibonacci substitution $\sigma$ defined on the set of finite words on $\{a,b\}$ which replaces $a$ with $ab$ and $b$ with $a$. Starting from $a$ and applying repeatedly $\sigma$ we obtain the sequence of finite words $a$, $ab$, $aba$, $abaab$, \ldots that converges to the infinite word $abaababaab\ldots$ called the Fibonacci word. The set or words produced by substitutions are self-similar, i.e., they can be decomposed as a finite disjoint union of their images under some endomorphisms. Note that the notion of substitution occurs in various fields of mathematics (symbolic dynamics, ergodic theory, number theory, harmonic analysis, fractal theory), as well as in theoretical computer science (combinatorics on words and language theory), physics (quasicrystalline structures), and even in biology (bioinformatics, plant development), such as illustrated by the bibliography of~\cite{pyt}.

If one wants to go beyond the substitutive case, and thus, get more flexibility in the hierarchical structure, one might want to change the substitution rules at each iteration step. We get systems which have similarities which depend on the scale. This leads to the notion of \emph{$S$-adic representation} of an infinite word $w$. A \emph{purely substitutive word} is a limit $w = \displaystyle \lim_{n \to+ \infty} \sigma^n(a),$ where $\sigma: \AAA^* \rightarrow \AAA^*$ is a substitution and the letter $a \in \AAA$. An example of a purely substitutive word is the Fibonacci word. An $S$-adic expansion of an infinite word $w$ is a sequence $((\sigma_n,a_n))_{n \in \NN}$, where $\sigma_n: \AAA_{n+1}^* \rightarrow \AAA_n^*$ are substitutions and the letters $a_n$ belong to $ \AAA_n$, such that $$w = \lim_{n\to+\infty} \sigma_0 \, \sigma_1 \, \cdots \, \sigma_{n-1}(a_n).$$ Hence, a purely substitutive word is an $S$-adic word with a periodic $S$-adic expansion. Further natural requirements have to be added to the sequence $((\sigma_n,a_n))_{n \in \NN}$ in order that the limit makes sense and to forbid degenerate constructions (see Remark~\ref{rem:cass}, p.~\pageref{rem:cass}). We emphasize that the notion of $S$-adic expansion is dedicated to words with low factor complexity (that is, with a low number of factors of a given length); indeed, under some general conditions, an $S$-adic word has zero entropy (see Theorem~\ref{thm:zero_entropy}).

A first parallel with continued fraction expansions can already be drawn: infinite words are analogues of real numbers and substitutions play the role of digits or partial quotients. Moreover, an $S$-adic expansion of a word gives naturally a generalized continued fraction expansion of its letter frequencies: the vector of uniform letter frequencies of the word (if any) is the limit of an infinite product of non-negative integer matrices obtained by taking an abelianized version of the substitutions, i.e., their incidence matrices (see Theorem~\ref{thm:cone_and_frequencies} and Theorem \ref{thm:invariant_measures}).

The term `adic' refers to the inverse limit for the products (with respect to composition) of substitutions of the form $\sigma_0 \, \sigma_1 \, \cdots \, \sigma_{n-1}$, and the terminology $S$-adic, introduced by S.~Ferenczi~\cite{frank}, comes from A.~Vershik's terminology for adic systems~\cite{Ver82}, with the letter $S$ referring to `substitution'.
Roughly speaking, an adic transformation acts on a path space made of infinite paths in a graph (a Bratteli-Vershik diagram,
or a Markov compactum), by associating with a path its successor with respect to a lexicographic ordering.
Recall that any invertible dynamical system on a Cantor space is topologically conjugate to the successor
map of a properly ordered Bratteli diagram~\cite{HPS}.
This description through Bratteli diagrams leads to the characterization of the topological orbit
equivalence~\cite{GPS}, or to the description of the stationary ordered Bratteli diagrams as
the (disjoint) union of the set of substitutive minimal dynamical systems and the set of
stationary odometers~\cite{DHS99}. An equivalent way of viewing Bratteli diagrams is through
Kakutani-Rokhlin constructions. The $S$-adic formalism can be viewed as a combinatorial
counterpart of these constructions where dynamical systems are studied through symbolic codings.

Families of $S$-adic words appear in different ways. The most simple one consists in taking a finite set of substitutions and consider all limits we can obtain by composing these substitutions. A more involved version that we develop uses graph-directed constructions. Examples of $S$-adic families include Sturmian systems~\cite[Chap. 2]{Lothaire:2002}, \cite[Chap. 5]{pyt}, Arnoux-Rauzy words~\cite{AR91,CFM08,CFZ00}, interval exchange transformations~\cite{ra1}. Moreover some general classes of dynamical systems 
admit  $S$-adic expansions with `nice' properties such as linearly recurrent systems (characterized in~\cite{Dur03} as the primitive proper $S$-adic subshifts), and, more generally, systems of finite rank in the measure-theoretic sense (see e.g.~\cite{frank} and the survey~\cite{frank2}).

\subsubsection*{History and motivation.}
Working with $S$-adic expansions has occurred quite early in the literature under various forms. (For more on historical considerations, see e.g. \cite[Chap. 12]{pyt} and \cite{Dur03,Leroy}.) Let us mention for instance~\cite{Wen} for particular $S$-adic expansions generated by two substitutions over a two-letter alphabet, or else~\cite{Mozes}, where consistent rectangular tiling substitutions are composed.

Note that the need to go beyond the self-similar case also occurs in various fields. One finds for instance constructions that go beyond Hutchinson's theory for iterated function systems in fractal theory: different contraction transformations can be applied at each step, with the contraction ratios being possibly different at each level; see e.g. \cite{MU} for the notion of conformal infinite iterated function systems, and also \cite{Hua}, for the notion of Moran sets and class, and for a review of various related notions. The same idea has also been developed in the context of numeration systems. Indeed, fibred numeration systems consist in consecutive iterations of a transformation producing sequences of digits. A simple generalization is obtained by changing the transformation at any step, with, as a classic example, Cantor (that is, mixed radix) expansions. For more details, see the survey \cite{BBLT}, and more generally, for the notion of fibred systems, see \cite{Schweiger95}.

The notion of $S$-adic expansion together with the terminology has been made precise by S.~Ferenczi in~\cite{frank}. The aim of this paper was to investigate the relation between the factor complexity of an infinite word (that is, the number of its factors of a given length), and the measure-theoretic rank of the associated dynamical system (that is, the number of Rokhlin towers required to approximate it). The notion of (finite) rank in ergodic theory has then lead to various adic-type symbolic constructions; see for instance \cite[Chap. 4,6,7]{CANT} and the references therein, or else \cite{FFT}, in the flavor of standard rules in word combinatorics \cite[Chap. 2]{Lothaire:2002}. Note also the more recent notion of fusion tilings introduced in \cite{PS11}, which also includes Bratteli-Vershik systems as well as multidimensional cutting and stacking  transformations. 

Expansions of $S$-adic nature have now proved their efficiency for yielding convenient descriptions for highly structured symbolic dynamical systems (opposed to chaotic or unpredictable ones). If one wants to understand such a system (for instance, its invariant measures), it might prove to be convenient to decompose it via a desubstitution process: an $S$-adic system is a system that can be indefinitely desubstituted. This approach has proved its strength for symbolic systems with linear factor complexity (see Section \ref{sec:complexity} below for more details). The desubstitution process can often be seen in terms of inductions, that is, first return maps. This is the seminal approach undertaken by G.~Rauzy in~\cite{ra0,ra1} for interval exchanges, and as a further example, fruitfully developed in~\cite{CFM08,CFZ00} for the study of Arnoux-Rauzy words.

Note also that numerous properties might then be deduced from a `right' $S$-adic representation of a dynamical system; consider as an illustration a circle rotation with its combinatorial counterpart, namely a Sturmian system (see Example~\ref{exa:sturm}), which yields to the study of occurrences of powers of factors~\cite{ice}, discrepancy estimates~\cite{Adam2,Adam3}, or else, the construction of transcendental numbers through their expansions (see e.g. the survey \cite[Chap. 8]{CANT} and the references therein).

\subsubsection*{Contents.} We have chosen to focus in the present survey paper on the parallel between continued fractions and $S$-adic words. This parallel originates in the seminal study of Sturmian dynamical systems, which are completely described in terms of $S$-adic expansions governed by the continued fraction expansion. For more details, see \cite{afh,AF1,ice,LRi}, \cite[Chap. 2]{Lothaire:2002}, or else, \cite[Chap. 5]{pyt}.

Let us briefly sketch the contents of the present paper. Basic definitions and preliminaries in symbolic dynamics will be recalled in Section \ref{sec:def}, by focusing on substitutive dynamics (that corresponds to periodic $S$-adic expansions) and shifts of finite type (that will be used to describe families of $S$-adic systems). Section~\ref{sec:representation} is devoted to the introduction of the definitions related to $S$-adic expansions. We consider in Section~\ref{sec:graph} (and later also in Section~\ref{subsec:definition_Lyapunov})
graph-directed $S$-adic systems, that is, families of $S$-adic words generated by $S$-adic 
graphs, with, as a main motivation, the fact that dynamical quantities associated with a graph-directed $S$-adic system (such as the entropy and Lyapunov exponents) are linked to generic properties of the $S$-adic words that belong to this system. The parallel with continued fractions expansions is stressed in Section~\ref{sec:cf}. We then handle the notion of $S$-adic expansions from a combinatorial, ergodic and arithmetic prospective. The combinatorial viewpoint is first tackled in Section \ref{sec:complexity} with a special focus on the factor complexity. Dynamical properties such as primitivity, linear recurrence, frequencies and invariant measures are discussed in Section~\ref{sec:dynamics}. Lastly,
 a quantitative and arithmetic viewpoint, including strong convergence issues and balance properties of words, together with deviations of Birkhoff averages, is handled in Section~\ref{sec:Lyapunov_exponents}.

\subsubsection*{What this paper is not about.}
In this paper we focus on certain topics while letting others aside. We try now to provide references to other related works on $S$-adic systems.

The theory of interval exchanges and the associated translation surfaces is a research  field which has been  very active during  the past 30 years. If interval exchanges and their symbolic codings may be thought as particular cases of $S$-adic systems,  there is not enough room here  to speak about the tremendous list of results that are specific to them. Let us just  mention the most striking ones: the existence of nice invariant measures on the space of interval exchanges~\cite{Masur,Veech},  which leads to the resolution of Keane's conjecture on the genericity of unique ergodicity, the prevalence of weak-mixing~\cite{Avila}, continued fractions and associated $S$-adic expansions of trajectories of billiards in regular polygons~\cite{ArnouxHubert,SmillieUlcigrai,Ferenczi-2ngon},  and lastly, codings of rotations and modular flow~\cite{Arnoux-modulaire,AF1}.

\smallskip

We will not consider the spectral theory of dynamical systems,  and in particular, the existence of (topological or measurable) eigenvalues and the question of weak-mixing.
For linearly recurrent systems, topological and measurable eigenvalues are completely characterized~\cite{Cortez,Bressaud,Bressaud2}. Note that  the formulation of this criterion is $S$-adic in nature.

The Pisot conjecture is a conjecture about the characterization of purely discrete spectrum for purely substitutive systems~\cite{pyt,CANT} (or $\beta$-numerations) of Pisot type. It is related to the notion of Pisot number through the eigenvalues of the incidence matrix of the substitution.  In particular cases, namely substitutive Arnoux-Rauzy words, the
 Pisot conjecture is known to hold~\cite{arnito,BJS}. Even for some $S$-adic Arnoux-Rauzy systems the purely discrete spectrum is still valid~\cite{BST}. Nevertheless, it is not true for all Arnoux-Rauzy words as proved in~\cite{CFM08}: there are examples of Arnoux-Rauzy systems that are weak-mixing.  The program initiated by G.Rauzy to prove that conjecture uses the so-called Rauzy fractal.
  The theory of Rauzy fractals has now proved its relevance in this framework and they  have many applications in various fields that range from number theory to discrete geometry; see for instance \cite{SieThus} or \cite[Chap. 5]{CANT} for a survey of the literature.
 However the parameters associated with Rauzy fractals for Pisot substitutions  are algebraic numbers. The question is now to be able to reach non-algebraic parameters, by working with suitable multidimensional continued fraction algorithms and their associated $S$-adic expansions; see \cite{BST} for the existence of Rauzy fractals associated with almost every two-dimensional toral translation, and see also Section \ref{sec:cf} for examples of $S$-adic expansions associated with multidimensional continued fractions.

\smallskip

Lastly, we will   not handle  multidimensional words. The notion of $S$-adic expansion may be indeed extended in that setting; see for example \cite{Aubrun} for a characterization of sofic $S$-adic multidimensional subshifts in terms of effectiveness of the directive sequence of substitutions.

\section{Preliminaries in symbolic dynamics}\label{sec:def}
\subsection{Words, languages and shifts}
Let $\AAA = \{1,2,\ldots,d\}$ be a finite alphabet.
Let $\AAA^*$ be the set of finite words, i.e., $\AAA^0 \cup \AAA^1 \cup \ldots$, and $\AAA^\NN$
the set of infinite words. The set $\AAA^\NN$ shall be equipped
with the product topology of the discrete topology on each copy of $\AAA$; this topology is
induced by the following distance: for two distinct infinite words $u$ and $v$ in $\AAA^\NN$, 
$\dist(u,v)=2^{-\min\{n \in \NN \ \mid \ u_n\neq v_n\}}$. Thus, two sequences are close to each other if
their first terms coincide. This topology extends to the set $\AAA^* \cup \AAA^\NN$ 
of finite and infinite words, with $\AAA^*$ being dense in $\AAA^* \cup \AAA^\NN$. It makes precise
the convergence of finite words to infinite words.

The set~$\AAA^*$ endowed with the operation of concatenation
forms a monoid. For a word~$u$ in $\AAA^*$, $|u|$~denotes its length and~$|u|_i$ the
number of occurrences of the letter~$i$ in~$u$. A~\emph{factor} of a finite or infinite word~$u$
is defined as the concatenation of consecutive letters occurring in~$u$. The set of factors of $u$ is called
its \emph{language}, and is denoted as $L_u$.
 
Let $T$ stand for the (one-sided) \emph{shift} on $\AAA^\NN$,
i.e., $T((u_n)_{n \in \NN}) = (u_{n+1})_{n \in \NN}$.
The shift $T$ is continuous and surjective on $\AAA^\NN$.
A \emph{symbolic dynamical system}, or \emph{shift}, or else \emph{shift space},  is a non-empty closed
subset of~$\AAA^\NN$, stable under the action of the shift. By considering factors of elements of a given shift
$X$, we may define its \emph{language}~$L(X) \subset \AAA^*$. The set~$L(X)$ is \emph{factorial} (it contains
all the factors of any of its elements), and \emph{extendable} (for any element $u$ of $L(X)$, there
exists a letter~$a$ such that~$wa$ is also in~$L(X)$). There is a well-known equivalence between shifts on
$\AAA$ and factorial extendable languages with letters in $\AAA$ (see e.g.~\cite{Lind}
or~\cite[Proposition 7.1.2]{CANT}), and we pass freely from one to the other: if $L$ is a factorial extendable
set of words, then the \emph{shift} $X_L$ defined by $L$ is the set of infinite words in $\AAA^{\NN}$ whose
factors belong to $L$. 

Let $u$ be an infinite word in~$\AAA^\NN$. Let $X_u$ be the orbit closure of the infinite word $u$ under the
action of the shift~$T$. More precisely, $X_u$ is  the closure in $\AAA^\NN$ of the set
$\{T^n(u) \mid n \in \NN\} = \{(u_k)_{k \geq n} \mid k \in \NN\}$.
The set~$X_u$ is closed and $T$-invariant and hence a shift. It coincides with the  set of infinite words whose
language is contained in $L_u$. We refer to $X_u$ as the \emph{symbolic dynamical system generated by~$u$}.

We will also sometimes consider biinfinite words, i.e.,  elements of $\AAA^\ZZ$.

\subsection{Substitutive systems and shifts of finite type}\label{subsec:subs_and_finite_type}
In this section we describe two types of, somewhat opposite, symbolic dynamical systems, namely self-similar
dynamical systems determined by a substitution and, on the other hand, shifts of finite type described
by finitely many adjacency rules. Their properties are very different, but in the context of substitutive
dynamics, and more generally of $S$-adic dynamics, there is a constant interplay between these two types of dynamics.

A morphism of the free monoid $\sigma \colon \AAA^* \rightarrow \BBB^*$ is completely determined by the images
of the letters in $\AAA$. We will often call such  a morphism a \emph{substitution}. A substitution
$\sigma: \AAA^* \rightarrow \BBB^*$ is \emph{non-erasing} if there is no letter in $\AAA$ whose image
under $\sigma$ is the empty word. We assume here that all substitutions are non-erasing. 
 
Let $\sigma: \AAA^* \rightarrow \BBB^*$ be a substitution where $\AAA = \{1,\ldots,d\}$ and $\BBB = \{1,\ldots,d'\}$.
The \emph{incidence matrix} of $\sigma$ is the $d' \times d$ matrix $M_\sigma = (m_{i,j})_{i,j}$ with entries
$m_{i,j}:=| \sigma(j)|_i$. It is a non-negative integer matrix.
A substitution $\sigma$ over the alphabet ${\cal A}$ is said to be \emph{primitive} if there exists a positive
integer $k$ such that, for every $i$ and $j$ in $\AAA$, the letter $i$ occurs in $\sigma^k(j)$.
It is \emph{positive} if, for every $i$ and $j$ in $\AAA$, the letter $i$ occurs in $\sigma(j)$, or equivalently,
all entries of the incidence matrix $M_{\sigma}$ are positive.

Substitutions are very efficient tools for generating infinite words and describing dynamical systems. Indeed,
let $\sigma$ be a non-erasing substitution over the alphabet $\AAA$. Assume that there exists a letter $i$ such
that $\sigma(i)$ begins with $i$ and $|\sigma(i)| \geq 2$. It is easily seen that
$\lim_{n\to+\infty}|\sigma^n(i)|=+\infty$. Then there exists a unique fixed point $u$ of $\sigma$ beginning with $i$.
This infinite word is obtained as the limit in $\AAA^* \cup \AAA^\NN$ of the sequence of words $(\sigma^n(a))_n$.
Such an infinite word is said to be \emph{generated by} $\sigma$, and more generally, it is said to be
\emph{purely substitutive} or \emph{purely morphic}. If $u \in \AAA^\NN$ is purely morphic and if
$\tau: \AAA^* \to \BBB^*$ is a morphism, then the word $v=\tau(u)$ is said to be \emph{morphic} or
\emph{substitutive}.

\begin{exa}[Fibonacci substitution] \label{ex:fibonacci}
We consider the substitution $\sigma$ on $\AAA = \{a,b\}$ defined by $\sigma(a)=ab$ and $\sigma(b)=a$.
Then, the sequence of finite words $(\sigma^n(a))_n$ starts with
\[
\sigma^0(a) = a, \ \sigma^1(a) = ab, \ \sigma^2(a) = aba, \ \sigma^3(a) = abaababa, \ \ldots
\]
Each $\sigma^n(a)$ is a prefix of $\sigma^{n+1}(a)$, and the limit word in $\AAA^\NN$ is
\[
abaababaabaababaababaabaababaabaababaababaabaababaababaabaab\ldots
\]
The above limit word is called the \emph{Fibonacci word} (see e.g. \cite{Lothaire:2002,pyt}).
\end{exa} 

If $\sigma$ is primitive, then there exists a letter $i$ and a positive integer $k$ such that $\sigma^k(i)$ begins
with $i$. Considering the infinite word generated by $\sigma^k$ starting with $i$ we obtain an infinite word $u$ which is periodic
for $\sigma$. Its language  only depends on $\sigma$ and is denoted  as $L_\sigma$. Similarly, we let $X_\sigma$ denote the shift associated with $\sigma$.
 It can be defined by
\[
L_\sigma = \bigcap_{n \in \NN} \overline{\sigma^n(\AAA)}
\]
where $\overline{M}$ is the \emph{factorial closure} of $M$ (i.e., the smallest factorial language that contains
$M$).

\smallskip

As a second class of shift spaces, we consider shifts of finite type (see e.g.~\cite{Lind}) and the more
general topological Markov shifts (see e.g.~\cite{Sarig,BuzziSarig}). Let $G = (V,E)$ be a directed graph
(with possible multiple edges), with vertex set $V$ and edge set $E$. Given an edge $e \in E$, we let
$s(e)$ denote the \emph{source} of $e$, that is, its initial vertex, and $r(e)$ its \emph{range}, that is,
its final vertex. The set of infinite paths in $G$ is
\[
\Sigma_G = \{\gamma =(\gamma_n)_n \in E^\NN\mid \forall n \geq 0,\, r(\gamma_n) = s(\gamma_{n+1})\}.
\]
The space $\Sigma_G$ is naturally a subshift of $E^\NN$. The action of $T$ on the path $\gamma$ simply forgets
the first edge $\gamma_0$ of the path. If $V$ and $E$ are finite, $\Sigma_G$ is a \emph{shift of finite type},
otherwise, it is a \emph{topological Markov shift}. In the former case, one can encode the graph $G$ through
its adjacency matrix $M$ where $M_{i,j}$ is the number of arrows from $i$ to $j$.

\smallskip

There exist deep relations between substitutive dynamical systems and shifts of finite type
and we refer to~\cite{can_sie,can_sie1} and~\cite[Chap. 7]{pyt} for the details. For recognizability
reasons highlighted in~\cite{can_sie}, we will work with the two-sided subshift $X_{\sigma}$. Let
$\sigma$ be a primitive substitution and let $M$ be its incidence matrix. In order to describe the infinite words that belong
to the two-sided subshift $X_{\sigma} \subset \AAA^\ZZ$ generated by $\sigma$, one uses the following
decomposition, called Dumont-Thomas prefix-suffix decomposition \cite{DT89}.
This decomposition is closely related to the so-called Dumont-Thomas numeration (see~\cite{DT89} and the
survey~\cite{BBLT} for more details). Any biinfinite word $u \in X_\sigma$ can be written as 
\begin{equation}\label{eq:ps}
u=\cdots\, \sigma^3(p_3)\, \sigma^2(p_2)\, \sigma(p_1)\, p_0 \cdot a_0\,  s_0\,  \sigma(s_1)\, \sigma^2(s_2)\, \sigma^3(s_3)\, \cdots
\end{equation}
where $a_0=u_0$, and for each $n$, $\sigma(a_{n+1})=p_n a_n s_n$, with $p_n,s_n \in \AAA^*$. Sequences
$((p_n,a_n,s_n))_{n \in \NN}$ are infinite paths in the prefix-suffix automaton defined as the following directed graph:
its states are the letters of the alphabet, and there exists an edge between $a$ and $b$, labeled by $(p,a,s)$
if $\sigma(b)$ can be written as $\sigma(b)=pas$. In other words, it is an infinite path in the graph associated
with the incidence matrix $M$.

Such a decomposition induces a map $X_\sigma \rightarrow \Sigma_M$ between the shift $X_\sigma$ of the substitution
and the subshift of finite type $\Sigma_M$ built from the incidence matrix  $M$ of $\sigma$.
This map is onto and one-to-one up to a countable number of points (these are precisely the infinite paths for which
the corresponding sequence of prefixes or suffixes is ultimately empty). The shift map $T$ on $X_\sigma$ is then
conjugate to an adic transformation on the subshift $\Sigma_M$ whereas the action of $\sigma$ on $X_\sigma$ is
conjugate to the shift map on $\Sigma_M$.

\subsection{Recurrence, return words and minimality} \label{subsec:recurrence}
An infinite word $u$ is said to be \emph{uniformly recurrent} if every factor of $u$ occurs infinitely
often and with bounded gaps. Note that this is a property of the language $L_u$. It can be shown
that $L_u$ is uniformly recurrent if and only if $X_u$ is \emph{minimal}, that is,  $\emptyset$ and $X_u$
are the only closed shift-invariant subsets of $X_u$. For more details, see \cite{Queffelec:10}.  

The \emph{recurrence function} $R(n)$ of a uniformly recurrent word $u$ is defined as
follows: for any $n$, $R(n)$ is equal to the smallest positive integer $k$ for which
every factor of size $k$ of $u$ contains every factor of size $n$. An infinite
word $u$ is said to be \emph{linearly recurrent} if there exists a constant
$C$ such that $R(n) \leq Cn$, for all $n$. The recurrence function is similarly defined for a factorial
language which is uniformly recurrent.

There is another version of the recurrence function which may be useful in other
contexts. Let $L$ be a factorial
language which is uniformly recurrent. A \emph{return word} of an element
$w$ in $L$ is a word $v$ that starts with $w$ such that $vw$ belongs to $L$
and has exactly two occurrences of $w$ (one at the beginning and one at the end).
Let $R'(n)$ be the maximum length of a return word of an element of length $n$ in $L$.
Then we have
\[
R(n) - n \leq R'(n) \leq R(n) - n + 1.
\]
In particular $R(n)$ is linear if and only if $R'(n)$ is also linear.

\begin{thm} \label{thm:subs_syst_are_linearly_recurrent}
Let $\sigma$ be a primitive substitution and $X_\sigma$ the associated shift.
Then, $X_\sigma$ is linearly recurrent.
\end{thm}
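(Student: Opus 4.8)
The plan is to bound the recurrence function of the language $L_\sigma = L(X_\sigma)$ linearly, by controlling \emph{uniformly} how many times a factor of length $n$ must be desubstituted before it sits inside the image of a single letter. After replacing $\sigma$ by a suitable power --- which changes neither $L_\sigma$ nor $X_\sigma$ and preserves primitivity --- I may assume some letter $i$ satisfies $\sigma(i) = i \cdots$; let $u$ be the generated fixed point, so that $L_u = L_\sigma$ and $u = \sigma^N(u_0)\,\sigma^N(u_1)\,\sigma^N(u_2)\cdots$ for every $N \ge 0$.

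First I would record the Perron--Frobenius estimates: since $M_\sigma$ is primitive with Perron eigenvalue $\lambda > 1$, there are constants $0 < c_1 \le c_2$ with $c_1\lambda^n \le |\sigma^n(a)| \le c_2\lambda^n$ for every letter $a$ and every $n \ge 1$, and in particular $\max_a|\sigma^n(a)| \le (c_2/c_1)\min_a|\sigma^n(a)|$ for all $n$. Next, again by primitivity, fix $k_0$ such that $\sigma^{k_0}(a)$ contains every letter, for every letter $a$. Since $L_\sigma$ has only finitely many factors of length $2$ and each of them occurs in $\sigma^m(i)$ once $m$ is large, I can fix a single $m_0$ with the property that every length-$2$ factor of $L_\sigma$ occurs in $\sigma^{m_0}(i)$. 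Combining these, for every length-$2$ word $ab \in L_\sigma$ and every letter $c$, the word $ab$ occurs in $\sigma^{m_0+k_0}(c)$, because $ab$ occurs in $\sigma^{m_0}(i)$ and $\sigma^{m_0}(i)$ is a factor of $\sigma^{m_0}\bigl(\sigma^{k_0}(c)\bigr)$.

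Now the embedding step. Given $w \in L_\sigma$ with $|w| = n$, let $N$ be least with $\min_a|\sigma^N(a)| \ge n$. Locating an occurrence of $w$ in $u = \sigma^N(u_0)\,\sigma^N(u_1)\cdots$, the inequality $n \le \min_a|\sigma^N(a)|$ forces $w$ to lie within two consecutive blocks, so $w$ is a factor of $\sigma^N(u_j u_{j+1})$ for some length-$2$ factor $u_j u_{j+1} \in L_\sigma$. By the previous paragraph, $\sigma^N(u_j u_{j+1})$ is a factor of $\sigma^{M}(c)$ for every letter $c$, where $M := N + m_0 + k_0$; hence $w$ occurs in $\sigma^M(c)$ for all $c$. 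Writing $u = \sigma^M(u_0)\,\sigma^M(u_1)\cdots$ as a concatenation of blocks, each of which contains $w$, consecutive occurrences of $w$ in $u$ are at distance at most $2\max_c|\sigma^M(c)|$; thus the maximal length $R'(n)$ of a return word to a factor of length $n$ satisfies $R'(n) \le 2\max_c|\sigma^M(c)|$.

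It remains to check that this is linear in $n$. For $n \ge 2$ one has $N \ge 1$, so the minimality of $N$ gives $\min_a|\sigma^{N-1}(a)| < n$, hence $c_1\lambda^{N-1} < n$ and $\lambda^N < (\lambda/c_1)\,n$; combined with $|\sigma^M(c)| \le c_2\lambda^M = c_2\lambda^{m_0+k_0}\lambda^N$ this yields $R'(n) \le C n$ with $C = 2c_2\lambda^{m_0+k_0+1}/c_1$ (the single case $n=1$ being trivial), and then $R(n) \le R'(n) + n \le (C+1)n$ by the inequality $R(n) - n \le R'(n)$ recalled above. Since $R$ depends only on $L(X_\sigma)$, the shift $X_\sigma$ is linearly recurrent. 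The delicate point --- and the reason primitivity is invoked twice --- is the uniformity: the desubstitution depth $N$ must grow only like $\log_\lambda n$, which is exactly the two-sided Perron--Frobenius bound together with the bounded ratio $\max_a|\sigma^n(a)|/\min_a|\sigma^n(a)|$, while the extra constant depth $m_0+k_0$ needed to make every length-$2$ factor appear under every letter must be independent of $w$, which uses that $L_\sigma$ has only finitely many length-$2$ factors.
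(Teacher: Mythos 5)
Your proof is correct and follows essentially the same route as the paper's: both rest on the Perron--Frobenius estimate $|\sigma^n(a)|\asymp\lambda^n$, a uniform depth (obtained from the finiteness of the set of length-$2$ factors plus primitivity) at which every length-$2$ factor appears in the image of every letter, and a desubstitution of a length-$n$ factor to depth $N\approx\log_\lambda n$ so that it sits inside $\sigma^N(ab)$ for some length-$2$ factor $ab$. The only cosmetic differences are that the paper first passes to a positive power and bounds $R(n)$ directly in terms of $R(2)$, whereas you keep $\sigma$ primitive and bound the return-word function $R'(n)$ via the block structure of the fixed point; the substance is the same.
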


\begin{proof}
  We first provide an upper bound for $R(2)$ and then prove how to use it to bound $R(n)$.

By taking possibly  a power, we may assume that $\sigma$ is positive. Let
  \[
  \beta_n^- = \min_{i \in \AAA} |\sigma^n(i)|
  \quad \text{and} \quad
  \beta_n^+ = \max_{i \in \AAA} |\sigma^n(i)|.
  \]
  By positivity, Perron-Frobenius theorem applies and the dominant eigenvalue of
  the incidence matrix $M_\sigma$ is positive and simple. Using once again  positivity,
  we have that all coefficients of $M_\sigma^n$ have roughly the same size. More precisely,
  there exists a constant $C$ such that
  \begin{equation} \label{eq:growth_matrix}
	C^{-1} \lambda^n \leq \beta^-_n \leq \beta^+_n \leq C \lambda^n.
  \end{equation}

  Let $w$ be a word of length $2$ in $L_\sigma$.
  There exists an image $\sigma^k(i)$, with $k \geq 0$ and $i \in \AAA$, that contains $w$.
Since $\sigma$ is positive, $w$ occurs in all  the images  of letters by $\sigma^{k+1}$.
  Now, each word of length greater than $2 \beta_{k+1}^+$ contains the image of  a letter by $\sigma^{k+1}$.
Since  there are finitely many words of length $2$, there is a $k$ which works for all
  of them and we have $R(2) \leq 2 \beta_{k+1}^+$. Remark that the same argument applied
  to words of length $n$ proves that the language $L_\sigma$ is uniformly
  recurrent or equivalently that the shift $X_\sigma$ is minimal.

  Now, we fix $n$ and provide an upper  bound for $R(n)$.
  Let $k$ be the smallest integer such that $\beta_k^- \geq n$.
  We have from~\eqref{eq:growth_matrix} that $k \leq \log n / \log \lambda + C'$ where $C' = \log C / \log \lambda + 1$.
  Now each word of length $n$ appears inside a word of the form $\sigma^k(i) \sigma^k(j)$ where $ij$ is a word of length $2$ in $L_\sigma$.
  We get that if $w$ contains all words of length $2$, then $\sigma^k(w)$ contains all words of length $n$.
  But $|\sigma^k(w)| \leq \beta_k^+ |w|$ from which we get
  \[
  R(n) \leq 2 R(2) \beta_k^+ \leq C'' n,
  \]
  with $C'' = 2 R(2) C \lambda^{C'}$,  with  the last inequality   being deduced from~\eqref{eq:growth_matrix}.
\end{proof}

Let us stress that the situation is really different for shifts of finite type, with respect to  minimality.
If the graph that determines a shift of finite type has several loops,
then it contains several periodic words (that are obtained by considering
the infinite paths that wrap  around a loop). Each periodic word has
a finite orbit under the shift $T$ which is hence a closed invariant
subset.

\subsection{A measure of chaos: factor complexity and entropy} \label{subsec:comp}
The (\emph{factor}) \emph{complexity function} of an infinite word $u$ counts the number of distinct factors of a
given length. We let $p_u(n)$ denote the number of factors of length $n$ of $u$.
The number $p_u(n+1)/p_u(n)$ is the mean number of ways to extend a word of length $n$.
Complexity function is hence a measure of predictability.

Here, we will focus on low complexity words. An infinite word $u$ is said to have
\emph{linear factor complexity} if $p_u(n)= \Theta(n)$, i.e., there exists
$C$ such that $C^{-1} n \leq p_u(n) \leq Cn$, and \emph{quadratic factor complexity} if $p_u(n)=\Theta(n^2)$. We say
that the word $u$ has \emph{at most linear} or \emph{at most quadratic complexity} if we have respectively 
$p_u(n) = O(n)$ (i.e.,  there exists $C$ such that $p_u(n) \leq Cn$), and $p_u(n) = O(n^2)$.

This notion of complexity extends in an immediate way to the language of a symbolic dynamical system $(X,T)$.
The exponential growth rate of the factor complexity $p_X(n)$ is equal to the topological entropy $h_X$ of the
system, that is,
\[
h_X = \lim_{n \to \infty} \frac{\log p_X(n)}{n} = \lim_{n \to \infty} \log \frac{p_X(n+1)}{p_X(n)}.
\]
The convergence above follows from the submultiplicativity property of the complexity function for factorial languages: $p_X(m+n) \leq p_X(m) p_X(n)$.
The complexity function is thus a refinement
of the notion of topological entropy, in particular when it has a low order of magnitude.
Note that languages with polynomial complexity have zero entropy, thus entropy does not distinguish between  them.
But, contrarily to the entropy which is invariant under measurable conjugation, the complexity function is not.
Nevertheless, if the symbolic systems $X$ and $Y$ are topologically conjugate (i.e.,  there exists a shift
invariant homeomorphism $X \rightarrow Y$),  then there exists a constant $C$ such that, for all $n$,
one has $p_X(n-c) \leq p_Y(n) \leq p_X(n+c)$, such as highlighted in~\cite{frank,Ferenczicomp}. This relies on the fact that  topological conjugacy 
is  equivalent to the existence of a sliding  block-code between both systems.
In particular, if $p(n)$ is polynomial, then both the exponent and the coefficient
of its leading term are preserved by topological conjugacy, whereas if $p(n)$ growths exponentially, then only
the entropy makes sense. Observe also that if $\sigma$ is a (non-erasing) substitution, then
$\displaystyle p_{\sigma(u)}(n) \leq \max_{a \in \AAA} |\sigma(a)| \  p_u(n)$, see \cite[Chap.4, Lemma~4.6.7]{CANT}.

We have the following elementary result.
\begin{thm} \label{thm:linear_rec_implies_linear_comp}
  Let $L$ be a language, $R$ its recurrence function and $p$ its complexity function.
  Then, $p(n) \leq R(n)$. In particular, linearly recurrent languages have at most linear
  complexity functions.
\end{thm}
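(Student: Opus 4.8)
The plan is to bound $p(n)$ by $R(n)$ using a simple covering argument. First I would fix $n$ and recall what $R(n)$ gives us: every factor of $L$ of length $R(n)$ contains every factor of $L$ of length $n$ as a subword. The key observation is then that the factors of length $n$ of $L$ are exactly the factors of length $n$ occurring inside any single factor $w$ of $L$ of length $R(n)$: on one hand every such factor of $w$ is in $L$ since $L$ is factorial; on the other hand, by the defining property of the recurrence function, $w$ already contains all factors of length $n$ of $L$.

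Next I would count. A word $w$ of length $R(n)$ has exactly $R(n) - n + 1$ factors of length $n$ (read off by sliding a window of width $n$), so the number of distinct factors of length $n$ of $L$ is at most $R(n) - n + 1 \leq R(n)$. This gives $p(n) \leq R(n)$ for every $n$, which is the first assertion. (One should check the degenerate case where $L$ has no factor of length $R(n)$; but since $L$ is uniformly recurrent and nonempty, $R(n)$ is finite and $L$ contains arbitrarily long words, so this does not arise.)

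Finally, the ``in particular'' statement is immediate: if $L$ is linearly recurrent, then by definition there is a constant $C$ with $R(n) \leq Cn$ for all $n$, hence $p(n) \leq R(n) \leq Cn$, so $L$ has at most linear complexity.

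I do not expect any serious obstacle here; the only point requiring a little care is making the window-counting argument precise — namely justifying that the map sending a position $i \in \{0, 1, \ldots, R(n)-n\}$ in a fixed length-$R(n)$ factor $w$ to the length-$n$ factor of $w$ starting at position $i$ is surjective onto the set of length-$n$ factors of $L$. This surjectivity is exactly the content of the recurrence function, so once that is spelled out the proof is complete.
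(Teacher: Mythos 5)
Your proof is correct: the sliding-window count inside a single factor of length $R(n)$ gives $p(n) \leq R(n)-n+1 \leq R(n)$, and the degenerate cases are properly dispatched. The paper states this result as ``elementary'' and omits the proof entirely, and your argument is exactly the standard one intended.
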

Note that, as a corollary, and  according also to Theorem \ref{thm:subs_syst_are_linearly_recurrent}, we get that a shift associated with a  primitive substitution has at
most linear complexity. More generally, purely
morphic words have a low factor complexity~\cite{Pansiot,CANT}: it is at most quadratic.
See also~\cite{Deviatov} for a study of the complexity function of some morphic words.

We warn the reader that there is no bound on $R(n)$ in terms of $p(n)$ as there exist Sturmian
words (of complexity $p(n)=n+1$) for which $R(n)$ grows very fast~\cite{MH2}.

\smallskip

The situation is completely different for shifts of finite type: they generally have positive entropy.
More precisely, let $M$ be a primitive matrix and consider the associated subshift of finite type.
Then, the complexity of the shift space grows exponentially with exponent the Perron-Frobenius eigenvalue of $M$.
We refert to~\cite{Lind} for a proof.

\subsection{Frequencies and invariant measures}\label{subsec:fm1}
Let $u$ be an infinite word. The \emph{frequency} of a letter $i$ in $u$ is defined as the limit when
$n$ tends towards infinity, if it exists, of the number of occurrences of $i$ in $u_0 u_1 \cdots u_{n-1}$ divided
by $n$.  The vector $f$ whose components are given by the
frequencies of the letters  is called the \emph{letter frequency vector}. The infinite word $u$ has \emph{uniform letter frequencies} if, for every letter $i$ of $u$, the number of
occurrences of $i$ in $u_k\cdots u_{k+n-1}$ divided by $n$ has a limit when $n$ tends to infinity, uniformly in $k$.

Similarly, we can define  the frequency   and the uniform frequency of a factor, and we say that $u$ has \emph{uniform frequencies} if all its
factors have uniform frequency.

An infinite word $u \in \AAA^\NN$ is said to be $C$-\emph{balanced} if for any pair $v,w$ of factors of the
same length of $u$, and for any letter $i \in \mathcal{A}$, one has $||v|_i - |w|_i| \leq C$. It is said
\emph{balanced} if there exists $C>0$ such that it is $C$-balanced.
\begin{prp}\label{prop:balanced}
An infinite word $u \in \AAA^\NN$ is balanced if and only if it has uniform letter frequencies and there exists
a constant $B$ such that for any factor $w$ of $u$, we have $||w|_i - f_i |w|| \leq B$ for all letter $i$ in $\AAA$,
where $f_i$ is the frequency of $i$. 
\end{prp}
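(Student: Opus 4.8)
The plan is to prove both implications directly from the definitions, with the forward direction being the substantive one. Suppose first that $u$ has uniform letter frequencies and that there is a constant $B$ with $||w|_i - f_i|w|| \leq B$ for every factor $w$ and every letter $i$. Given two factors $v,w$ of the same length, the triangle inequality gives $||v|_i - |w|_i| \leq ||v|_i - f_i|v|| + |f_i|w| - |w|_i| \leq 2B$, so $u$ is $2B$-balanced. This direction is immediate and requires no further work.

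For the converse, assume $u$ is $C$-balanced. First I would establish that each letter $i$ has a letter frequency. Fix $i$ and consider the sequence $a_n = |u_0 u_1 \cdots u_{n-1}|_i$. For a prefix of length $m+n$, split it into the prefix of length $m$ and the following block of length $n$; both blocks of length $n$ that one compares are factors, so $C$-balancedness yields $|a_{m+n} - a_m - a_n| \leq C$, i.e.\ the sequence $(a_n)$ is within $C$ of being subadditive and superadditive simultaneously. Hence $a_n / n$ is a Cauchy sequence: a standard argument (as in the Fekete-type lemma, but using the two-sided near-additivity) shows $|a_n/n - a_m/m| \le 2C/\min(m,n)$ up to lower-order terms, so the limit $f_i := \lim a_n/n$ exists. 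The same estimate, applied to an arbitrary window $u_k \cdots u_{k+n-1}$ rather than a prefix, and comparing it to the prefix of the same length via balancedness, shows the convergence is uniform in $k$, so the frequencies are in fact uniform.

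Next I would produce the constant $B$. Let $w$ be any factor of $u$ of length $n$. Since $w$ occurs in $u$, there is an occurrence as a window $u_k \cdots u_{k+n-1}$, and by uniform frequency $|w|_i$ is within, say, $C$ of $f_i n$ once one notes that for \emph{every} window of length $n$ the count $|w|_i$ lies in the interval $[\,\beta_n, \beta_n + C\,]$ where $\beta_n = \min$ over all length-$n$ factors of the $i$-count; and $f_i n$ lies in this same interval since $f_i n = \lim_{N} (1/N)\sum$ of $i$-counts of consecutive length-$n$ blocks, an average of numbers in $[\beta_n,\beta_n+C]$. Therefore $||w|_i - f_i n| \leq C$, and one may take $B = C$.

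The main obstacle is the first part of the converse: extracting \emph{existence} and \emph{uniformity} of the frequency from balancedness alone. The key observation that makes it routine is that $C$-balancedness gives near-additivity of the counting function in both directions (so both $\limsup$ and $\liminf$ of $a_n/n$ are squeezed), and that comparing an arbitrary window to a prefix of equal length costs only $C$, which is exactly what upgrades ordinary convergence to uniform convergence. Once that is in hand, the bound $B = C$ drops out by realizing $f_i n$ as a limit of averages of the (at most $C$-spread) window counts.
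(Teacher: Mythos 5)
Your proof is correct and follows essentially the same route as the paper's: the forward direction is identical, and in the converse you rely on the same key fact that all length-$n$ factors have $i$-counts confined to an interval $[\beta_n,\beta_n+C]$ of width $C$, which must also contain $f_i n$ because $f_i n$ is a limit of averages of such counts. The only cosmetic difference is that you obtain convergence of the normalized counts via near-additivity of the prefix counting function (a Fekete-type argument), whereas the paper proves the Cauchy property by decomposing a factor of length $pq$ in two ways; both yield the same final bound $B=C$.
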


\begin{proof}
Let $u$ be an infinite word with letter frequency vector $f$ and such that $||w|_i - f_i |w|| \leq B$ for every factor
$w$ and  every letter $i$ in $\AAA$. Then, for every pair of factors $w_1$ and $w_2$ with the  same length $n$, we have by
triangular inequality
\[
||w_1|_i - |w_2|_i| \leq ||w_1|_i - n f_i| + ||w_2|_i - n f_i| \leq 2B.
\]
Hence $L$ is $2B$-balanced (see also \cite[Proposition 7]{Adam0}).

Conversely, assume that $u$ is $C$-balanced for some $C>0$. We fix a letter $i \in \AAA$.
For every non-negative integer $p$, let $N_p$ be defined as an integer $N$ such that
for every word of length $p$ of $u$, the number of occurrences of the letter
$i$ belongs to the set $\{N,N+1,\cdots, N+C\}$. 
 
We first observe that the sequence $(N_p/p)_{p \in \NN}$ is a Cauchy sequence.
Indeed consider a factor $w$ of length $pq$, where $p,q \in {\mathbb N}$.
The number $|w|_i$ of occurrences of $i$ in $w$ satisfies
\[
p N_q \leq |w|_i \leq p N_q + p C, \ \quad q N_p \leq |w|_i \leq q N_p + q C.
\]
We deduce that $-q C \leq q N_p -p N_q \leq p C$ and thus $-C \leq N_p - p N_q /q \leq p C / q$.
 
Let $f_i$ stand for $\lim_q N_q/q$. By letting $q$ tend to infinity, one then deduces that 
$-C \leq N_p -p f_i \leq 0$. Thus, for any factor $w$ of $u$ we have
\[
\left|\frac{| w|_i }{|w| }- f_i \right|\leq \frac{C}{|w|},
\]
which  was to be proved.
\end{proof}

Note that  having frequencies is a property of an infinite  word while  having uniform frequencies is a property of the associated
language or shift. In particular, it makes sense to speak about uniform frequencies for languages or shifts. A probability measure $\mu$ on $X$ is
said invariant if $\mu(T^{-1} A) = \mu(A)$ for every measurable subset $A \subset X$.
An invariant probability measure on a shift $X$ is said \emph{ergodic} if any shift-invariant measurable set
has either measure $0$ or $1$ (which can be thought as a generalization of a situation where Kolmogorov 0-1 law holds).
If $\mu$ is an ergodic measure on $X$, then we know from  the Birkhoff  ergodic Theorem that $\mu$-almost every infinite word in $X$ has
frequency $\mu([w])$, for any factor $w$, where $[w] = \{u \in X; u_0 \ldots u_{n-1} = w\}$ is a
\emph{cylinder},  but this frequency is not necessarily uniform.
If $X$ is \emph{uniquely ergodic}, i.e., there
exists a unique shift-invariant probability measure on $X$, then  the unique invariant measure on $X$ is ergodic and the convergence
in the Birkhoff  ergodic Theorem holds for every infinite word in $X$. The property of  having uniform  factor frequencies  for a shift $X$ is actually equivalent to  unique ergodicity.  In that case, one
can recover the frequency of a factor $w$ of length $n$ as $\mu([w])$.  Balancedness is again a property of the associated shift and may be thought as a strong form of uniform letter frequencies. For more details on invariant measures and ergodicity, we refer to~\cite{Queffelec:10} and~\cite[Chap. 7]{CANT}.

Lastly, let us stress the fact that linear recurrence is related both to low factor complexity (by Theorem \ref{thm:linear_rec_implies_linear_comp})
and  to unique ergodicity, such as illustrated by the following result.

\begin{thm}[Linear recurrence \cite{DHS99}]\label{thm:LR}
Let $X$ be a linearly recurrent symbolic dynamical system. Then, $(X,T)$ is uniquely ergodic.
\end{thm}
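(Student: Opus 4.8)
The plan is to exploit the combinatorial characterization of linear recurrence in terms of return words, and to show that the bounded length of return words forces all factor frequencies to exist uniformly, with an explicit rate. Recall from Section~\ref{subsec:recurrence} that $X$ linearly recurrent means there is a constant $C$ with $R(n) \leq Cn$ for all $n$, and hence (by the comparison $R'(n) \leq R(n) - n + 1$) every return word of a factor $w$ of length $n$ has length at most $Cn$. Since unique ergodicity of a shift is equivalent to the existence of uniform factor frequencies (as stressed in Section~\ref{subsec:fm1}), it suffices to prove that every factor $w$ admits a uniform frequency.

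First I would fix a factor $w$ of length $n$ and list its return words $v_1, \ldots, v_r$; linear recurrence bounds each $|v_j|$ by $Cn$, and since $X$ is minimal (uniform recurrence), $r$ is finite and $w$ occurs with bounded gaps. The key structural observation is that any sufficiently long factor $u$ of $X$ containing an occurrence of $w$ decomposes, up to a bounded prefix and suffix of length $\le Cn$, as a concatenation of return words $v_{j_1} v_{j_2} \cdots v_{j_k}$, and the number of occurrences of $w$ in $u$ equals $k$ plus a bounded error. I would then run the classical Cauchy-sequence argument (exactly the one used in the proof of Proposition~\ref{prop:balanced}): compare the count of occurrences of $w$ in factors of length $p$ and of length $pq$. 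Writing $N_p$ for the number of occurrences of $w$ in a factor of length $p$ — which a priori ranges over an interval — one uses the fact that a length-$pq$ factor splits into roughly $q$ blocks of length $p$ (with bounded overlap correction controlled by $Cn$) to get an inequality of the shape $|qN_p - pN_q| \leq O(Cn \cdot q)$ after suitable normalization, so that $(N_p/p)_p$ is Cauchy; its limit is the desired frequency $f_w$, and the estimate $\big||u|_w - f_w|u|\big| = O(Cn)$ holds uniformly over all factors $u$, which is precisely uniformity of the frequency.

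Equivalently — and this is the cleaner route I would actually write — one can induce a return-word structure: the set of return words of $w$ is finite, and the sequence of return words read along any point of $X$ is itself a point of a subshift on the alphabet $\{v_1,\ldots,v_r\}$; a short induction on nesting of return words (the Durand–Host–Skau hierarchy) shows this induced system is again linearly recurrent on a strictly controlled alphabet, and one iterates. But for a self-contained survey-style proof, the direct subadditivity/Cauchy argument above is preferable.

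The main obstacle is handling the boundary terms correctly: when one tiles a long factor by return words of $w$, the leftmost and rightmost incomplete pieces contribute an error that must be shown to be $O(Cn)$ uniformly, independent of the length of the factor, and one must be careful that occurrences of $w$ straddling block boundaries are counted consistently (this is why return words, rather than arbitrary factorizations, are used — consecutive occurrences of $w$ are in bijection with the blocks). Once that bookkeeping is pinned down, the passage from "bounded fluctuation of occurrence counts" to "existence of a uniform frequency" is the same Cauchy argument already carried out in Proposition~\ref{prop:balanced}, and unique ergodicity follows from the stated equivalence between uniform factor frequencies and unique ergodicity.
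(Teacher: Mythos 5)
The paper itself gives no proof of Theorem~\ref{thm:LR}: it is stated with a reference to \cite{DHS99}, and the machinery the survey assembles for it is the characterization of linear recurrence as strongly primitive \emph{proper} $S$-adic (Theorem~\ref{thm:linearly_recurrent}) combined with the cone-contraction results (Theorem~\ref{thm:furs} and Theorem~\ref{thm:invariant_measures}). So your proposal must be judged on its own; and its central step does not work.

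The problem is the Cauchy argument you transplant from Proposition~\ref{prop:balanced}. That argument needs, for each length $p$, that the number of occurrences of $w$ in \emph{all} factors of length $p$ lies in an interval of width bounded independently of $p$ (that is exactly balancedness with respect to $w$); only then does tiling a length-$pq$ factor by length-$p$ blocks give $\lvert N_p/p - N_q/q\rvert \to 0$. Linear recurrence does not supply this. What it supplies is that consecutive occurrences of $w$ ($\lvert w\rvert = n$) are separated by at most $Cn$ and at least roughly $n/C$, so the count of $w$ in a factor of length $p$ is only pinned down to an interval of width of order $p/n$ --- the same order as the count itself. The fluctuation $\Delta_p$ of $\lvert u\rvert_w$ over factors $u$ of length $p$ being $o(p)$ is essentially \emph{equivalent} to the uniform frequency you are trying to establish, so invoking it makes the argument circular. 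Your claimed conclusion $\bigl\lvert\,\lvert u\rvert_w - f_w\lvert u\rvert\,\bigr\rvert = O(Cn)$ uniformly in $u$ is in fact false: it asserts balancedness, and the paper itself exhibits linearly recurrent counterexamples (the primitive substitution $a\mapsto abc$, $b\mapsto bba$, $c\mapsto cca$ has letter discrepancy growing like $p^{\log 2/\log 3}$; the Thue--Morse system is unbalanced for factors of length~$2$).

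Your alternative route --- decompose along return words and iterate the Durand--Host--Skau hierarchy --- is the correct one, but it is precisely the step you decline to carry out that contains the theorem. Decomposing a long factor into return words of $w$ gives $\lvert u\rvert_w/\lvert u\rvert = k/\sum_i \lvert v_{j_i}\rvert + o(1)$, and since the return words $v_j$ have different lengths, this ratio depends on the asymptotic proportions of the $v_j$ in the derived word; that is the same problem one level up. What closes the recursion is that the incidence matrices of the successive return-word substitutions are drawn from a finite set and are primitive within a bounded window, so the nested cones $M_0\cdots M_{n-1}\RR_+^d$ contract to a line by the Birkhoff/Hilbert-metric argument of Theorem~\ref{thm:furs}, whence unique ergodicity by Theorem~\ref{thm:invariant_measures}. (Alternatively one can quote Boshernitzan's criterion, since linear recurrence forces $\mu([w]) \geq 1/(C\lvert w\rvert)$ for every invariant $\mu$, but that criterion is itself a nontrivial theorem, not the elementary Cauchy estimate.) Without one of these contraction inputs, the proof does not close.
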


\smallskip

For subshifts of finite type, there are many invariant measures, but the \emph{pressure principle} provides a unique measure of maximal entropy (and more generally a unique measure associated with any weight function) \cite{Walters,Lind}. The measure of maximal entropy is built as follows. Consider an edge shift of finite type $\Sigma_M$ associated with the adjacency matrix $M$ of its underlying graph. If the matrix $M$ is primitive, then the subshift of finite type $\Sigma_M$ has a unique measure of maximal entropy, called the \emph{Parry measure}, and built from the Perron-Frobenius eigenvector of the matrix $M$. Indeed, let $M$ be a primitive matrix and let $\lambda$ and $p$ be respectively the Perron-Frobenius eigenvalue and an associated right eigenvector of $M$. Then, we associate with each edge $ij$ the weight $\frac{p_j}{\lambda p_i}$. The associated Markov measure is invariant for the shift on $\Sigma_M$; this is the Parry measure. It is possible to build many further invariant measures on $\Sigma_M$. Some of great importance are obtained by considering potentials (see e.g.~\cite{Lind}).

\smallskip

We know that  the  substitutive subshift $X_\sigma$ determined by a primitive substitution $\sigma$ is  uniquely
ergodic from Theorem~\ref{thm:subs_syst_are_linearly_recurrent} and \ref{thm:LR} . We provide another
proof of that result in Theorem~\ref{thm:invariant_measures}. Indeed all the information about the unique shift-invariant
measure for $X_\sigma$ can actually be obtained from the incidence matrix $M_\sigma$.
First of all, the Perron-Frobenius eigenvector of $M_\sigma$ is the  letter  frequency vector  of $X_\sigma$.
To obtain information for  word frequencies,  one then  needs the Dumont-Thomas prefix-suffix decomposition
recalled in Section~\ref{subsec:subs_and_finite_type}. It induces a map
$X_\sigma \rightarrow \Sigma_M$ between the shift $X_\sigma$ of a primitive substitution
$\sigma$ and the subshift of finite type $\Sigma_M$ built from the incidence matrix of $\sigma$.
The unique invariant measure on $X_\sigma$ is mapped on the Parry measure on $\Sigma_M$.
Note that  a random word in $X_\sigma$ is obtained by following a random walk in $\Sigma_M$ according to the Parry measure.
We will see that, in the $S$-adic framework, as the substitutions change at each step,
the infinite paths,   considered in the prefix-suffix automaton for substitutions, are seen on a Bratteli diagram (see Remark~\ref{rem:Bratteli}, p.~\pageref{rem:Bratteli}).

\begin{rem}\label{rem:Bosh}
Note that the notion of factor complexity discussed in the previous section also provides
information concerning the number of ergodic measures. 
As shown by M.~Boshernitzan~\cite{Bosh84}, the topological quantity $\liminf p_X(n)/n$
provides a bound for the number of ergodic measures; for more details, see~\cite[Chap. 7]{CANT}.
Furthermore, for minimal shifts, $\limsup p_X(n)/n <3$ implies unique ergodicity~\cite{Bosh84}.
This can be applied for instance to Arnoux-Rauzy words on a three-letter alphabet (they have factor complexity $2n+1$).
Note also that there exist examples of infinite words with exponential factor complexity
(and arbitrarily high entropy) and uniform frequencies~\cite{Grillenberger72}.
\end{rem}

\section{$S$-adic words and families}\label{sec:representation}
Now that we have recalled the main definitions and properties concerning symbolic dynamical systems, and substitutive ones in particular, we introduce the corresponding notions for $S$-adic words and associated symbolic systems.

\subsection{First definitions}
Let $S $ be a set of (non-erasing) morphisms of the free monoid. 
Let $s = (\sigma_n)_{n\in\NN} \in S^\NN$, with $\sigma_n: \AAA_{n+1}^* \rightarrow \AAA_n^*$, be a sequence of  substitutions, and let $(a_n)_{n\in\mathbb{N}}$ be a sequence of letters with $a_n \in \AAA_n$ for all $n$. We say that the infinite word $u \in \AAA^\NN$ admits $((\sigma_n,a_n))_n$ as an \emph{$S$-adic representation} if
$$u = \lim_{n\to\infty} \sigma_0 \sigma_1 \cdots \sigma_{n-1}(a_n).$$ The sequence $s$ is called the~\emph{directive sequence} and the sequences of letters $(a_n)_n$ will only play a minor role compared to the directive sequence. If the set $S$ is finite, it makes no difference to consider a constant alphabet (i.e., $\AAA_n^* = \AAA^*$ for all $n$ and for all substitution  $\sigma$ in $S$). 
 As we will constantly use products of   substitutions, we introduce the notation $\sigma_{[k,l)}$ for the product $\sigma_k \sigma_{k+1} \ldots \sigma_{k+l-1}$. In particular, $\sigma_{[0,n)} = \sigma_0 \sigma_1 \ldots \sigma_{n-1}$.

A word admits many possible $S$-adic representations. But some $S$-adic representations might be
useful to get information about the word. More precisely, some properties are actually equivalent
to have some $S$-adic representation of a special kind (see in particular Theorem~\ref{thm:minimality}
for minimality and Theorem~\ref{thm:linearly_recurrent} for linear recurrence).

Of course, an infinite word that is generated by a substitution is an example of an $S$-adic word
and Theorem~\ref{thm:subs_syst_are_linearly_recurrent} may be seen as a prototype of an $S$-adic result. 

In order to avoid degeneracy construction we introduce the following definition.
\begin{dfn}[Everywhere growing]\label{def:grow}
 An $S$-adic representation defined by the directive sequence
 $(\sigma_n)_{n \in \NN}$, where $\sigma_n: \AAA_{n+1}^* \rightarrow \AAA_n^*$,
 is \emph{everywhere growing} if for any sequence of letters $(a_n)_n $, one has
 \[
 \lim _{n \rightarrow +\infty} | \sigma_{[0,n)}(a_n)|= +\infty.
 \]
\end{dfn}
We define the $S$-adic language $L$ associated with the $S$-adic   directive sequence $(\sigma_n)_n$ assumed to be  everywhere growing  as 
\[
L = \bigcap_{n \in \NN} \overline{\sigma_0 \sigma_{1} \ldots \sigma_{n-1} (\AAA_n)},
\]
where, as in Section~\ref{subsec:subs_and_finite_type}, $\overline{M}$ is the factorial closure of $M$.
We also define the shift associated with $(\sigma_n)_n$ as the shift associated with $L$.

\begin{exa}[Sturmian words I]\label{ex:sturm1}
We define Sturmian words for which the Fibonacci word of Example~\ref{ex:fibonacci} p.~\pageref{ex:fibonacci}
is a particular case. We consider the substitutions $\tau_a$ and $\tau_b$ defined over
the alphabet $\AAA = \{a,b\}$ by 
$\tau_a \colon a \mapsto a, b \mapsto ab$ and $\tau_b \colon a \mapsto ba, b \mapsto b$.
Let $(i_n) \in \{a,b\}^\NN$.
The following limits
\begin{equation}\label{eq:sturm}
 u = \lim_{n\to\infty} \tau_{i_0 }\tau_{i_1} \cdots \tau_{i_{n-1}}(a) = \lim_{n\to\infty} \tau_{i_0} \tau_{i_1} \cdots \tau_{i_{n-1}}(b)
\end{equation}
exist and coincide whenever the directive sequence $(i_n)_n$ is not ultimately constant
(it is easily shown that the shortest of the two images by  $\tau_{i_0} \tau_{i_1} \ldots \tau_{i_{n-1}}$ is a prefix of the other).
This latter condition is equivalent to the everywhere growing property of Definition~\ref{def:grow}.
The infinite words thus produced belong to the class of Sturmian words.
More generally, a \emph{Sturmian word} is an infinite word whose set of factors coincides with the set of factors of
a sequence of the form~\eqref{eq:sturm}, with the sequence $(i_n)_{n\geq 0}$ being not ultimately constant.

Let us consider a second set of substitutions $\mu_a \colon a \mapsto a, b \mapsto ba$ and $\mu_1 \colon a \mapsto ab, b \mapsto b$.
In this latter case, the two corresponding limits do exist, namely 
\begin{equation} \label{eq:sturm_bis}
w_a = \lim_{n\to\infty} \mu_{i_0} \mu_{i_1} \cdots \mu_{i_n}(a)
\quad \text{and} \quad
w_b = \lim_{n\to\infty} \mu_{i_0} \mu_{i_1} \cdots \mu_{i_n}(b)
\end{equation}
for any sequence $(i_n) \in \{a,b\}^{\mathbb N}$. As $w_a$ starts with $a$ and $w_b$ with $b$ they do not coincide.
But, provided the directive sequence $(i_n)_n$ is not ultimately constant, the languages generated by $w_a$ and $w_b$ are the same,
and they also coincide with  the language generated by $u$ (for the same sequence $(i_n)$).
\end{exa}
\begin{exa}[Arnoux-Rauzy words]\label{exa:AR}
Let $\AAA=\{1,2,\ldots,d\}$. We define the set of Arnoux-Rauzy substitutions as $S_{AR} = \{\mu_i \mid i \in \mathcal{A}\}$ where
\[
\mu_i:\ i \mapsto i,\ j \mapsto ji\ \mbox{for}\ j \in \mathcal{A} \setminus \{i\}\,.
\]
Example~\ref{ex:sturm1} corresponds to the case $d=2$. An \emph{Arnoux-Rauzy word}~\cite{AR91} is an infinite word $\omega \in \mathcal{A}^\mathbb{N}$ whose set of factors coincides with the set of factors of a sequence of the form 
\[
\lim_{n\to\infty} \mu_{i_0} \mu_{i_1} \cdots \mu_{i_n} (1), 
\]
where the sequence $(i_n)_{n\geq 0} \in \mathcal{A}^\mathbb{N}$ is such that every letter in~$\AAA$ occurs infinitely often in~$(i_n)_{n\ge0}$. 
For more on Arnoux-Rauzy words, see~\cite{CFM08,CFZ00}.
\end{exa}

\begin{exa}[Three interval exchange words] \label{exa:3iet}
An interval exchange transformation is a map of the interval obtained as follows: cut the interval into pieces and reorder
them with respect to some permutation. Such a map comes with a natural coding which consists in giving a name
to each subinterval. Let us use the abbreviation $k$-iet for an interval exchange transformation with $k$ subintervals.
A $k$-iet word is by definition a word that is the natural coding of an orbit of a $k$-iet.
Rotations (for which the coding gives Sturmian words) are exactly the $2$-iet. Here, we represent  3-iet words
in an $S$-adic way and we refer to~\cite{Viana,Yoccoz}
for more details on the theory of interval exchanges.
In order to obtain an $S$-adic representation of an interval exchange transformation, one can use the so-called
\emph{Rauzy induction} introduced in~\cite{ra1}. For 3-iet words one obtains the following
\begin{center}
\includegraphics{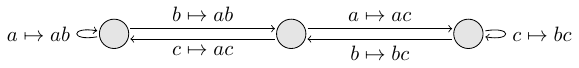}
\end{center}
Each edge is labeled by a substitution on $\{a,b,c\}$ where we omit letters that are mapped to themselves. We say that the letter $i$ \emph{wins} on a given edge if the substitution is of the form $i \mapsto ix$ or $i \mapsto xi$. Any infinite path in the above graph such that all letters win infinitely often gives a sequence of substitutions whose associated limit word is  a (natural) coding of a minimal 3-iet. 
For $S$-adic representations for 3-iet see~\cite{Adam2,Santini}, and for another type of induction for 3-iet see also~\cite{FerRocha,FHZ1,FHZ2,FHZ3}. Lastly, for $k$-iet, see~\cite{ra1,Ferzam}.

More generally, given a dynamical system that may be represented by a cutting and stacking construction (i.e., a Bratteli digram), the sequence of cuttings and stackings performed  provides naturally a sequence of substitutions that describes the orbits of this dynamical system (see~\cite{DHS99}).\end{exa}
 
\begin{exa}[$S$-adic expansions by return words]\label{ex:return}
Let $u$ be a uniformly recurrent word on $\AAA = \AAA_0$. Let $w$ be a non-empty factor
of $u$. Recall from Section~\ref{subsec:recurrence} that a return word of $w$ is a word
separating two successive occurrences of the word $w$ in $u$ (possibly with overlap). By
coding the initial word $u$ with these return words, one obtains an infinite word called
the \emph{derived word}, defined on a finite alphabet, and still uniformly recurrent. Indeed, start with  the letter $u_0$. There exist  finitely many  return words to $u_0$. Let $ w_1$, $w_2, \ldots, w_{d_1}$ be these return words, and consider the associated morphism $\sigma_0: \AAA_1 \rightarrow \AAA_0, \ i \mapsto w_i$, with $\AAA_1=\{1,\ldots,d_1\}$. Then, there exists a unique word $u'$ on $\AAA_1$ such that $u = \sigma_0(u')$. Moreover, $u'$ is uniformly recurrent. It is hence possible to repeat the construction and one obtains an $S$-adic representation of $u$. We refer to \cite{DHS99}, \cite{Dur03} and \cite{Leroy} for the details. The alphabets of this representation are a priori of unbounded size. In the particular case where
$u$ is a primitive substitutive word, then the set of derived words is finite. This is even a characterization (see \cite{Dur98}): a uniformly recurrent word is substitutive if and only if the set of its derived words is finite. 
\end{exa}

\begin{rem} \label{rem:Bratteli}
Let us briefly review one variation around the definition of $S$-adic words. Recall that $T$ denotes the shift on infinite sequences.
We extend to finite words by mapping a finite word of length $n$ to its suffix of length $n-1$. The map $T$ is continuous on $\AAA^* \cup \AAA^\NN$. We will abuse the notation in the sense that $T$ stands for the shift on the sets of words with different alphabets. One can define more general $S$-adic words by allowing products of the form
\[
T^{k_0}\, \sigma_0\, T^{k_1}\, \sigma_1\, \ldots\, T^{k_{n-1}}\, \sigma_{n-1} (a_n).
\]
These representations may be thought as a version of the Dumont-Thomas prefix-suffix decomposition (generalizing~\eqref{eq:ps})
\begin{equation} \label{eq:adic_DT}
\cdots\, \sigma_{[0,2)}(p_2)\, \sigma_{[0,1)}(p_1)\, p_0\, .\, a_0\, s_0\, \sigma_{[0,1)}(s_1)\, \sigma_{[0,2)}(s_2)\, \cdots
\end{equation}
where we keep only the right infinite word and $k_i = |p_i|$. This construction is especially useful as it may be used to describe all infinite words that belong to the language generated by the directive sequence $(\sigma_n)_n$. Note that possible sequences $((p_n,a_n,s_n))_n$ are not defined as paths on a fixed directed graph as it was the case for purely substitutive sequences (see the end of Section~\ref{subsec:subs_and_finite_type}). Nevertheless a similar construction can be done by building an infinite acyclic graph, namely a Bratteli diagram, as follows. The set of vertices of the graph is the disjoint union $\{\emptyset\} \sqcup \AAA_0 \sqcup \AAA_1 \sqcup \ldots$. Then, we put an edge between $\emptyset$ and each letter in $\AAA_0$, and an edge between a letter $a$ in $\AAA_n$ and $b$ in $\AAA_{n+1}$ if $\sigma_n(b) = pas$ for some words $p$ and $s$. With each infinite path in this diagram, provided that neither the sequence of prefixes nor the sequence of suffixes is ultimately empty, is associated the well-defined biinfinite word defined by~\eqref{eq:adic_DT}.
\end{rem}
We develop the case of Sturmian words in the Example~\ref{ex:sturm2} below (see also~\cite{ice}).

\begin{exa}[Sturmian words II]\label{ex:sturm2} We continue Example~\ref{ex:sturm1}.
Since the incidence matrices of the $\tau_i$'s (or of the $\mu_i$'s) generate $GL(2,{\mathbb N})$, any Sturmian language can be generated
with either $\{\tau_0,\tau_1\}$ or $\{\mu_0,\mu_1\}$ with the same directive sequence (seen as a sequence $(i_n)_n$ on $\{0,1\}$). More precisely, for any Sturmian language $L_{\alpha}$, there exists a unique infinite word $u$ of the form~\eqref{eq:sturm} (or $w_0$ or $w_1$ given by~\eqref{eq:sturm_bis}) whose language coincides with $L_{\alpha}$. Nevertheless, not every Sturmian word admits an $S$-adic expansion obtained by using either $\{\tau_0,\tau_1\}$ or $\{\mu_0,\mu_1\}$. Indeed, there are two ways of obtaining all possible Sturmian words. The first one consists in considering the set of substitutions $\{\tau_0,\tau_1,\mu_0,\mu_1\}$ (and even, only three substitutions are required). The other one, mentioned above, uses shifts. More precisely, every Sturmian word $u$ (on the alphabet $\{0,1\}$) admits a unique $S$-adic representation as an infinite composition of the form
\[
u = T^{c_0}\, \tau_0^{a_0}\, T^{c_1}\, \tau_1^{a_1}\, T^{c_2}\, \tau_0^{a_2}\, T^{c_3}\, \tau_1^{a_3}\, \cdots,
\]
where $T$ is the shift map, $a_k \geq c_k \geq 0$ for all $k$ and $a_k \geq 1$ for $k \geq 1$, and if $c_k=a_k$, then $c_{k-1}=0$.
The sequence $(a_k)_{k}$ turns out to be the sequence of partial quotients of the continued fraction associated with the slope $\alpha$ of $u$ (which can also be defined as the frequency of the letter $1$ in $u$), while $(c_k)_{k}$ is the sequence of digits in the arithmetic Ostrowski expansion of the intercept of $u$. When generating any Sturmian word using only $4$ substitutions and no shifted images, similar admissibility conditions  also occur. For more details, see \cite{afh,ice,LRi}, \cite[Chap. 2]{Lothaire:2002} and \cite[Chap. 5]{pyt}.

We insist on the fact that, from the viewpoint of languages or shifts, the occurrence of shifted images does not matter. 
\end{exa}

\begin{rem}\label{rem:cass}
To be `$S$-adic' is not an intrinsic property of an infinite word, but a way to construct it. Indeed, without further restriction, every infinite word $u=(u_n)_n \in \AAA^\NN$ admits an $S$-adic representation. Here, we recall the classical construction due to J. Cassaigne (see \cite{pyt2}). We consider $u=(u_n)_n \in \AAA^\NN$. We introduce a further letter $\ell \not \in \AAA$, and we work on $\AAA \cup \{\ell\}$.
 For every letter $a \in \AAA$, we introduce the substitution $\sigma_{a}$ defined over the alphabet $\AAA \cup \{\ell\}$ as
$\sigma_a(b)= b$, for $b \in \AAA$, and $\sigma_a (\ell)= \ell a$. We also consider the substitution $\tau_{\ell}$ over the alphabet $\AAA \cup \{\ell\}$ defined as $\tau_{\ell}(\ell )= u_0$, and $\tau_{\ell}(b)= b$, for all $b \in \AAA$. One checks that
\[
u = \lim _{n \to +\infty} \tau_{\ell} \, \sigma_{u_1} \, \sigma_{u_1} \, \ldots \, \sigma_{u_n} (\ell).
\]
Hence $u$ admits an $S$-adic representation with $S = \{\sigma_a\mid a \in {\mathcal A}\} \cup \{\tau_{\ell}\}$.
We stress the fact that, despite $u$ belongs to $\AAA^\NN$, this $S$-representation involves the larger-size alphabet $\AAA \cup \{\ell\}$. Observe also that this representation is not everywhere growing.
\end{rem}

\begin{prp}[\cite{pyt2}]  \label{prp:power_criterion}
  Let $L$ be a factorial extendable language on $\{0,1\}$ that contains
  $10^k1$ and $01^k0$ for all $k \in \NN$.
  Then, $L$ admits no $S$-adic expansion  for which  all the substitutions are defined on a 2-letter
  alphabet.
\end{prp}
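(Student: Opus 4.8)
Suppose, toward a contradiction, that $L$ has an everywhere growing $S$-adic representation $(\sigma_n)_n$ with every $\sigma_n$ on a two-letter alphabet (this non-degeneracy is genuinely needed: the all-identity sequence ``represents'' $\{0,1\}^*$, which satisfies the hypothesis). After relabelling I take every $\AAA_n = \{0,1\}$ and write $\varphi_n = \sigma_{[0,n)}$, so $L = \bigcap_n \overline{\varphi_n(\{0,1\}^*)}$. I would call a morphism $\tau$ of $\{0,1\}^*$ \emph{benign} if each of $\tau(0),\tau(1)$ is a power of a single letter. The one combinatorial fact I would isolate first: if neither $\tau(0)$ nor $\tau(1)$ equals $1^j$ for some $j$, then every maximal run of $1$'s inside any $\tau(w)$ has length at most $2\max\{|\tau(0)|,|\tau(1)|\}$, since such a run can meet at most two consecutive blocks $\tau(a)$ (a full interior block would have to be a power of $1$), and symmetrically for runs of $0$'s. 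The hypothesis on $L$ provides, for each $k$, a factor $10^k1$ (a long run of $0$'s delimited by $1$'s) and a factor $01^k0$; the plan is to show these two facts force the $\sigma_n$ into an extremely rigid shape, incompatible with everywhere growing.

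\textbf{Step 1: the leading substitution is a letter-blow-up.} I would prove the mini-lemma: for any two-letter $S$-adic language $L'$ satisfying the hypothesis of the proposition, the first substitution $\tau$ of its directive sequence is, up to exchanging $0$ and $1$, of one of the forms $0 \mapsto 0^{a},\ 1 \mapsto 1^{b}$ (``type I'') or $0 \mapsto 1^{a},\ 1 \mapsto 0^{b}$ (``type II''), with $a,b\ge1$. Indeed, since $L' \subseteq \overline{\tau(\{0,1\}^*)}$: if $\tau$ is not benign, say $\tau(1)$ contains both letters, then at least one letter $c \in \{0,1\}$ is a power of neither $\tau(0)$ nor $\tau(1)$, so by the remark all runs of $c$ in $L'$ are bounded, contradicting $10^k1 \in L'$ for all $k$ (if $c=0$) or $01^k0 \in L'$ for all $k$ (if $c=1$); and if $\tau$ is benign with $\tau(0),\tau(1)$ powers of the same letter, then $L'$ uses only one letter, contradicting $101,010 \in L'$. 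So $\tau$ is benign and bichromatic, i.e.\ of type I or II.

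\textbf{Step 2: propagate, then collapse.} Next I would pass to the shifted directive sequence $(\sigma_n)_{n\ge1}$ and its $S$-adic language $L' = \bigcap_{n\ge1}\overline{\sigma_1\cdots\sigma_{n-1}(\{0,1\}^*)}$, and check that it again satisfies the hypothesis. Assuming $\sigma_0$ is of type I (type II is the same after swapping the letters, which the hypothesis is invariant under), in any word $\sigma_0(v)$ all $1$'s come from blocks $\sigma_0(1)=1^{b}$ and every $0$-run is a concatenation of whole blocks $\sigma_0(0)=0^{a}$; hence a factor $10^k1$ of $\sigma_0(v)$ forces $10^{j}1$ in $v$ with $ja \ge k$, and a factor $01^k0$ forces $01^{j}0$ in $v$ with $jb \ge k$. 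Letting $k\to\infty$ shows $L'$ contains $10^j1$ and $01^j0$ for all $j$. Applying the mini-lemma to $L'$, $\sigma_1$ is again type I or II; iterating, \emph{every} $\sigma_n$ is type I or II. Since a composition of type I/II morphisms is again type I/II, each $\varphi_n$ maps each letter to a power of a single letter. Whichever of the two types $\varphi_n$ has, a maximal run of one letter in a word of $\varphi_n(\{0,1\}^*)$ that is bounded on both sides by the other letter has length a positive multiple of the length of the image of the corresponding source letter; so $101,010 \in L \subseteq \overline{\varphi_n(\{0,1\}^*)}$ force $|\varphi_n(0)| = |\varphi_n(1)| = 1$, for every $n$. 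Hence $|\varphi_n(a_n)| = 1$ for any sequence of letters $(a_n)_n$, contradicting everywhere growing.

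\textbf{Where the difficulty lies.} The substance is entirely in Steps 1--2: converting ``arbitrarily long two-sided runs of both letters in $L$'' into ``each $\sigma_n$ is a pure letter-blow-up''. The delicate points are (i) the bookkeeping for runs that straddle two blocks, handled by the remark above once one notes that a run touching three or more blocks would need a monochromatic interior block, and (ii) tracking the recurring letter-exchange produced by type II substitutions, which is harmless because the hypothesis ``$10^k1, 01^k0 \in L$ for all $k$'' and the definitions of type I and type II are all symmetric under swapping $0$ and $1$. Everything else (the decomposition $L = \bigcap_n \overline{\varphi_n(\{0,1\}^*)}$, the passage to the shifted directive sequence, the behaviour of factorial closures under morphisms) is routine.
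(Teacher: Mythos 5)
Your proof is correct and follows essentially the same route as the paper's: arbitrarily long runs of both letters force each composed substitution $\sigma_{[0,n)}$ to send the two letters to powers of two distinct single letters, and the factors $101$ and $010$ then force both images to have length one, contradicting non-degeneracy. The only differences are organizational — the paper applies this argument directly to each product $\sigma_{[0,n)}$ instead of propagating the hypothesis through the shifted directive sequences as you do, and it leaves the required non-degeneracy implicit in its final ``contradiction,'' whereas you rightly pin it down as the everywhere growing condition.
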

In particular this proposition applies to the well-known Champernowne word obtained by concatenating
the representations in base $2$ of the non-negative integers
\[
0\,1\,10\,11\,100\,101 \,110 \,111\, \cdots
\]
The language generated by the Champernowne word is $\{0,1\}^*$ and hence of complexity $p(n) = 2^n$. It is
nevertheless possible to build example of quadratic complexity by concatenation of the powers $0^k$ and
$1^k$ as follows:
\[
0\, 1\, 00\, 11\, 000\, 111\, 0000\, 1111\, \cdots
\]
One can check that this language has complexity $n(n+1)/2 + 1$. It is even possible to build morphic examples.

Let us also observe  that such a language $L$ is never uniformly recurrent. But it is possible to build an
infinite word $u$ on a 2-letter alphabet, with quadratic complexity, and which admits no everywhere growing
$S$-adic representation with all substitutions defined on a uniformly bounded alphabet (see Remark
 \ref{rem:no_bounded_adic}). 

Finally, let us mention that the presence of too much powers in a word forbids linear complexity~\cite[Corollary~3.24]{DLR}.

\begin{proof}
We prove it by contradiction. Assume that $(\sigma_n)_n$ is a directive sequence of such a language with all the 
$\sigma_n$ defined on $\{0,1\}$. Then, for each $n$, any word in $L$ is a factor of a word made of  a concatenation of
the two words $\sigma_{[0,n)}(0)$ and $\sigma_{[0,n)}(1)$. Since $u$ contains arbitrarily large ranges of $0's$, then
at least one of them contains only $0$'s. The same holds by considering the $1$'s. But the ranges between
two successive occurrences of $0$'s, and similarly, between two successive occurrences of $1$'s, take all
possible values. Hence the only possibility is that $\sigma_{[0,n)}$ equals the identity $0 \mapsto 0, 1 \mapsto 1$ or the 
permutation $0 \mapsto 1,\, 1 \mapsto 0$. This yields the desired contradiction.
\end{proof}

\begin{exa}[Sturmian words and Gauss continued fractions]\label{exa:sturm}
The usual continued fraction algorithm provides the best rational approximations of a real number $\alpha$ and describes in a natural way the combinatorial properties of Sturmian words, as well as those of their associated symbolic dynamical systems. Indeed Sturmian words are natural codings of rotations of the unit circle $\TT = \RR / \ZZ$. More precisely, let $\alpha \in (0,1)$ be an irrational number, and let $T_{\alpha} \colon x \mapsto x+\alpha \mod 1$ denote the rotation by $\alpha$ on the unit circle $\TT = \RR / \ZZ$. We obtain a natural coding on the alphabet $\AAA = \{a,b\}$ by considering the two-interval partitions made of $I_a = [0,1-\alpha)$ and $I_b = [1-\alpha,1)$. The words that are obtained as  natural codings of infinite orbits of $T_{\alpha}$ are exactly the Sturmian words of slope $\alpha$, and they define a shift $X_{\alpha} \subset \{0,1\}^\NN$. We have seen that the latter may be obtained as a natural $S$-adic symbolic dynamical system. Note that the parametrization of all rotations is naturally the circle $\TT = \RR / \ZZ$. But from our $S$-adic viewpoint, the rotations are parametrized by infinite sequences on the two-letter alphabet $\{0,1\}$ that encode the Gauss continued fraction transformation.
\end{exa}

\subsection{Main issues}
The questions one might want to ask concerning $S$-adic expansions are of a various nature. First, given an $S$-adic expansion (and not a word),  one has to prove the existence of a limit! This is a combinatorial (in the sense of word combinatorics) question.

Recall that primitive substitutions give rise to shifts which are minimal, uniquely ergodic and with linear complexity.
We will extend each of these properties into the $S$-adic framework. From a combinatorial viewpoint, we first might want to count the number of factors of a given length. This will be developed in Section~\ref{sec:complexity}. This question is deeper as it might occur at first view. Indeed, it was one of the main motivations for the introduction of $S$-adic expansions, in an attempt to get characterizations of infinite words with linear factor complexity function. Dynamical questions, such as the minimality and invariant measures of the associated symbolic dynamical system are then natural. This will be discussed in Section~\ref{sec:dynamics}.

We can refine the question of the unique ergodicity in a quantitative way through the study of discrepancy (or deviations of Birkhoff sums). It turns out that it is intimately related to Diophantine approximation. Namely, both the growth of discrepancies for $S$-adic systems and the Diophantine  approximation exponent for multidimensional continued fractions are related to the ratio of the two first Lyapunov exponents. Lyapunov exponents are introduced in Section~\ref{sec:Lyapunov_exponents} which is 
devoted to these  convergence quality issues.

\subsection{$S$-adic families} \label{sec:graph}
As observed in the previous examples, many $S$-adic words naturally appear in families. All of them are actually associated with a construction that we call $S$-adic graph. This section introduces these families of $S$-adic languages and how they can be studied through another dynamical system, namely the shift that operates on the set of directive sequences.

We first illustrate this approach with the family of Sturmian words. According to Example~\ref{exa:sturm}, a Sturmian shift $X_{\alpha}$ is determined by the associated rotation $x \mapsto x +\alpha$, and more precisely by its angle $\alpha$, which is a real number in $[0,1]$. The interval $[0,1]$ carries a natural measure, namely the Lebesgue measure. We thus can say that a property holds for \emph{almost all} Sturmian systems $X_{\alpha}$ if it holds for almost all ${\alpha} $ in $[0,1]$. Some properties hold for all the Sturmian systems, such as $1$-balancedness, or the fact that the complexity function equals $n+1$ for all $n$ (see e.g. \cite{Lothaire:2002}). But some other properties hold only generically. Let us recall from \cite{MH2} the following striking example of such a generic behavior. We use the notation $R_{\alpha}$ for the recurrence function of the Sturmian shift $X_{\alpha}$. Recall that $R_\alpha(n)$ is the smallest integer $k$ such that any factor in $X_\alpha$ of length $k$ contains all  the factors of length $n$.

\begin{thm}[\cite{MH2}]
Let $\Phi $ be a function which is positive and non-decreasing on ${\mathbb R}_+$ and such that $\lim_{ x \to +\infty} \Phi (x) = +\infty$.
For almost all $\alpha$, $\limsup _n \frac{R_{\alpha} (n) }{ n \Phi(\log n)} $ is finite or infinite according to the fact that the series
$\sum \frac{1}{\Phi(n)}$ is convergent or divergent.
\end{thm}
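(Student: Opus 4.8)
The strategy is to reduce the statement, via the classical continued--fraction formula for the recurrence function of a Sturmian shift, to a statement about the partial quotients of $\alpha$ that is then settled by the Borel--Bernstein theorem. Write the continued fraction expansion $\alpha = [0;a_1,a_2,\ldots]$, with convergents $p_k/q_k$, so that $q_{k+1} = a_{k+1}q_k + q_{k-1}$ and $q_{k+1}/q_k \in [a_{k+1},a_{k+1}+1)$.

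First I would recall Morse and Hedlund's description of $R_\alpha$ (see also~\cite{afh,ice,LRi} and~\cite[Chap.~2]{Lothaire:2002}): for $q_k \le n < q_{k+1}$ one has $R_\alpha(n) = \Theta(q_{k+1})$, with multiplicative constants independent of $\alpha$ and of $n$. Indeed, the $n+1$ factors of length $n$ of $X_\alpha$ correspond bijectively to the atoms of the partition of $\TT = \RR/\ZZ$ into intervals on which the first $n$ symbols of the $T_\alpha$-coding are constant; by the three-distance theorem, for $q_k \le n < q_{k+1}$ the smallest such atom has length $\asymp \|q_k\alpha\| \asymp 1/q_{k+1}$, so the longest return word of a factor of length $n$ --- which, by the bound $R(n)-n \le R'(n) \le R(n)-n+1$ recalled above, equals $R_\alpha(n)-n$ up to a bounded additive error --- has length comparable to the longest return time of $T_\alpha$ to an atom of this partition, namely $\Theta(q_{k+1})$. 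Since $n < q_{k+1}$ on the block, this gives $R_\alpha(n) = n + \Theta(q_{k+1}) = \Theta(q_{k+1})$ throughout $q_k \le n < q_{k+1}$. Because $\Phi$ is non-decreasing, the map $n \mapsto n\,\Phi(\log n)$ is non-decreasing, so the supremum of $R_\alpha(n)/(n\,\Phi(\log n))$ over the $k$-th block is, up to universal constants, attained at $n = q_k$ and equals $\Theta\!\big(q_{k+1}/(q_k\,\Phi(\log q_k))\big) = \Theta\!\big(a_{k+1}/\Phi(\log q_k)\big)$. Since the integers $n$ partition into these blocks, this yields
\[
\limsup_{n\to\infty}\frac{R_\alpha(n)}{n\,\Phi(\log n)} < \infty
\quad\Longleftrightarrow\quad
\limsup_{k\to\infty}\frac{a_{k+1}}{\Phi(\log q_k)} < \infty .
\]

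Next I would turn $\log q_k$ into a linear function of $k$. One always has $q_k \ge \phi^{\,k-1}$, with $\phi$ the golden ratio, so $\log q_k \ge (k-1)\log\phi$; and by L\'evy's theorem $\tfrac1k\log q_k \to \tfrac{\pi^2}{12\log 2}$ for Lebesgue-almost every $\alpha$, so in particular $\log q_k \le Ck$ eventually for a.e.\ $\alpha$ (this upper bound also follows from $q_k \le \prod_{j\le k}(a_j+1)$ together with the ergodic theorem, since $\int\log(a_1+1)$ is finite for the Gauss measure). As $\Phi$ is positive and non-decreasing, an elementary comparison shows that $\sum_k 1/\Phi(\beta k + \gamma)$ and $\sum_n 1/\Phi(n)$ converge or diverge simultaneously for every fixed $\beta>0$ and $\gamma \in \RR$; hence in the equivalence above we may sandwich $\Phi(\log q_k)$ between $\Phi((k-1)\log\phi)$ and $\Phi(Ck)$ (the latter a.e.) without affecting the convergence behaviour, and any residual index shifts $k \mapsto k\pm1$ are harmless, being absorbed into $\gamma$.

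Finally I would invoke the Borel--Bernstein theorem: for a positive non-decreasing $\psi$, the set of $\alpha$ with $a_k(\alpha) \ge \psi(k)$ for infinitely many $k$ has full or null Lebesgue measure according as $\sum_k 1/\psi(k)$ diverges or converges. If $\sum_n 1/\Phi(n) < \infty$, then applying this with $\psi(k) = \Phi((k-2)\log\phi)$ (for which $\sum_k 1/\psi(k) < \infty$) shows that, for a.e.\ $\alpha$, $a_{k+1} < \Phi((k-1)\log\phi) \le \Phi(\log q_k)$ for all large $k$, so $\limsup_k a_{k+1}/\Phi(\log q_k) \le 1$ and, by the equivalence, the recurrence ratio is finite. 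If $\sum_n 1/\Phi(n) = \infty$, then for each $M \in \NN$ the series $\sum_k 1/\big(M\,\Phi(C(k+1))\big)$ diverges, so by Borel--Bernstein, for a.e.\ $\alpha$, $a_{k+1} \ge M\,\Phi(C(k+1)) \ge M\,\Phi(\log q_k)$ for infinitely many $k$; intersecting over the countably many $M$ gives $\limsup_k a_{k+1}/\Phi(\log q_k) = +\infty$ a.e., and hence the recurrence ratio is infinite. The main obstacle is the first step: obtaining $R_\alpha(n) = \Theta(q_{k+1})$ on $q_k \le n < q_{k+1}$ with constants uniform in $\alpha$ requires either the explicit Morse--Hedlund formula for $R_\alpha$ in terms of the Ostrowski expansion of $\alpha$, or a careful analysis of the return words of Sturmian factors through the three-distance theorem; everything after that is a routine combination of L\'evy's theorem and the Borel--Bernstein theorem (equivalently, a Borel--Cantelli argument based on the Gauss--Kuzmin estimate $\mathrm{Leb}(a_{k+1} \ge t) \asymp 1/t$ and the mixing of the Gauss map).
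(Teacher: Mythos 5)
Your argument is correct. Note first that the paper itself contains no proof of this statement: it is quoted verbatim from Morse and Hedlund \cite{MH2} as an illustration of generic behaviour, so there is no in-text argument to compare against. Your route --- the explicit Morse--Hedlund formula giving $R_\alpha(n) = \Theta(q_{k+1})$ uniformly on each block $q_k \le n < q_{k+1}$, the reduction of the limsup to $\limsup_k a_{k+1}/\Phi(\log q_k)$ using the monotonicity of $n \mapsto n\,\Phi(\log n)$ and $q_{k+1}/q_k \asymp a_{k+1}$, the a.e.\ linearization $\log q_k \asymp k$ via $q_k \ge \phi^{k-1}$ and L\'evy's theorem, and finally the Borel--Bernstein dichotomy --- is precisely the classical proof of this metric result, and the individual steps check out: the comparison of $\sum_k 1/\Phi(\beta k + \gamma)$ with $\sum_n 1/\Phi(n)$ uses only positivity and monotonicity of $\Phi$, the index shifts are harmless, and the intersection over countably many $M$ in the divergence case is handled correctly. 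The only point treated sketchily is the one you flag yourself, namely the uniform two-sided estimate $R_\alpha(n) = \Theta(q_{k+1})$ with constants independent of $\alpha$; this is exactly the content of Morse and Hedlund's explicit computation of the recurrence function (one has $R_\alpha(n) = q_{k+1} + q_k + n - 1$ on the relevant range), and can be rederived through the three-distance theorem and the return-word inequality $R(n)-n \le R'(n) \le R(n)-n+1$ as you indicate, so nothing essential is missing.
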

In particular, for a.e. $\alpha$
$$\limsup \frac{R_{\alpha}(n)}{ n \log n}= +\infty, \ \mbox{ and } \limsup \frac{R_{\alpha}(n)}{ n( \log n)^a}= 0 \mbox{ for } a>1. $$

The idea behind this generic phenomenon for Sturmian words 
comes from the ergodicity of the renormalization dynamics provided by the Gauss map $x \mapsto \left\{ 1/x \right\}$ on the set of slopes $[0,1]$.
 
More generally, given a minimal symbolic dynamical system $(X,T)$ generated by an $S$-adic infinite word $u$ of the form $\lim_{n\to\infty} \sigma_{[0,n)}(a_n),$ two actions enter into play: the shift acting on the symbolic dynamical system $X$, and the shift acting on the associated directive sequence $s=(\sigma_n)_n$. Note that in the case of a substitutive system, this second action is somewhat trivial as the sequence $s$ is periodic. It is also useful to consider simultaneously all the sequences $u^{(k)}= \lim_{n\to\infty} \sigma_{[k,n)} (a_n)$ in some combinatorial questions (see as an illustration~\cite{CFZ00,BCS} for estimates of (im)balance properties for Arnoux-Rauzy sequences).

For Sturmian shifts (Example ~\ref{ex:sturm1}) or Arnoux-Rauzy shifts (Example~\ref{exa:AR}) we saw that any directive sequence (up to some non-degeneracy assumption) is allowed. For interval exchanges this is not the case and the order in which the substitutions are composed is governed by a finite set of rules encoded in a graph where substitutions are on edges, and paths correspond to allowed compositions (see Example~\ref{exa:3iet}). This is the viewpoint developed below, according to~\cite{delecroix}, where the notion of $S$-adic graph is introduced. By endowing the graph $G$ with ergodic invariant measures, Lyapunov exponents can be defined (see Section~\ref{subsec:definition_Lyapunov}) which control different properties of the associated graph-directed $S$-adic systems such as the global complexity and also some properties of generic $S$-adic sequences (with respect to that measure) such as the balancedness (see Section~\ref{subsec:deviation_Birkhoff_and_balancedness}).
 
More precisely, an \emph{$S$-adic graph} is a directed strongly connected graph $G = (V,E)$ (with possibly multiple edges), and moreover, with each edge $e$ in $E$ is associated a substitution $\tau_e: \AAA^* \rightarrow \AAA^*$ where $\AAA$ denotes some fixed alphabet\footnote{We restrict the definition of $S$-adic graph to a constant finite alphabet as we do not know natural examples where unbounded alphabets do appear (but we might have an unbounded number of substitutions). Nevertheless, it would still  be possible to have an alphabet associated with each vertex of the graph with the suitable compatibility conditions for the edges.}. We let $\tau$ denote the map $e \mapsto \tau_e$, and refer to the pair $(G,\tau)$ as an \emph{$S$-adic graph}. Each edge $e \in E$ joins a vertex $s(e)$, called the \emph{source}, to a vertex $r(e)$, called the \emph{range}. With the graph $G$ we associate the edge shift $\Sigma_G \subset E^\NN$ made of all infinite paths in $G$:
\[
\Sigma_G = \{(\gamma_n) \in E^\NN \mid \forall n \in \NN, \, r(\gamma_n) = s(\gamma_{n+1}) \}.
\]

With a point $\gamma \in \Sigma_G$, we associate a sequence of substitutions $(\tau_{\gamma_i})_i$ read on the edges of the infinite path $\gamma$. The set of substitutions involved is provided by $\{\tau_e\}$. We let $X_G(\gamma)=X_{\gamma}$ denote the $S$-adic dynamical system associated with the sequence $(\tau_{\gamma_i})_i$. The \emph{graph-directed $S$-adic system} $X_G$ associated with the $S$-adic graph $(G,\tau)$ is then defined as the family of the shift paces $X_G(\gamma)$ for $\gamma \in \Sigma_G$. The $S$-adic graph $(G,\tau)$ is said to \emph{parametrize} the family $X_G$. There is no restriction on the size of the graph which may be finite or infinite.

In the framework of $S$-adic graphs, we emphasize that we have two levels of dynamics: the $S$-adic shifts obtained $X_G(\gamma)$ along infinite paths $\gamma$ in $G$, and the shift on $\Sigma_G$. Those two dynamics are dramatically opposite. The languages of $S$-adic words usually have a low factor complexity (see Section~\ref{sec:complexity}), and thus, zero entropy, whereas the shift $\Sigma_G$ is a subshift of finite type with positive entropy. In Section~\ref{sec:Lyapunov_exponents}, we will introduce suitable measures on the shift space $\Sigma_G$ which may be used to provide generic results for $S$-adic sequences built from the $S$-adic graph $(G,\tau)$.

\subsection{Multidimensional continued fractions} \label{sec:cf}
The $S$-adic framework is intimately connected with continued fractions. Indeed, finding an $S$-adic description of a symbolic dynamical system is a first step toward the description of a multidimensional continued fraction algorithm that governs its 
letter frequency vector. More precisely, given a shift $X$ on $\AAA = \{1,\ldots,d\}$, we define the cone of letter frequencies $f \in \RR^d$ as the cone generated by letter frequencies of elements in $X$ (when they exist). For $S$-adic shifts the cone of letter frequencies is intimately related to the directive sequence $(\sigma_n)_n$ as shown in the following result.
\begin{thm} \label{thm:cone_and_frequencies}
Let    $(\sigma_n)_n$, with $\sigma_n: \AAA_{n+1}^* \rightarrow \AAA_n^*$, be a sequence of substitutions
which is everywhere growing and let $X$ be 
the associated $S$-adic shift.
Let $M_n$ be the incidence matrix of $\sigma_n$. Then, the cone
\[
\lim_{n \to \infty} M_0 M_1 \ldots M_n \RR^{d_n}_+,
\]
where $d_n$ stands for the cardinality of $\AAA_n$, 
is the convex hull of the set of half lines $\RR_+ f$ generated by the letter
frequency vectors $f$ of words in $X$.
\end{thm}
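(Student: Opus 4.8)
The plan is to prove the two inclusions separately, relating the nested sequence of cones $C_n := M_0 M_1 \cdots M_{n-1} \RR^{d_n}_+$ to the letter frequency vectors of points of $X$. Note first that the $C_n$ are nested: $C_{n+1} \subseteq C_n$ because $M_n \RR^{d_{n+1}}_+ \subseteq \RR^{d_n}_+$ (incidence matrices are nonnegative). Hence the limit cone $C_\infty := \bigcap_n C_n$ is well defined, closed, and convex, and the content of the theorem is that $C_\infty$ equals the convex hull $K$ of the half-lines $\RR_+ f$ where $f$ ranges over letter frequency vectors of elements of $X$.

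For the inclusion $K \subseteq C_\infty$, I would take $w \in X$ having a letter frequency vector $f \in \RR^{d_0}$ and show $f \in C_n$ for every $n$. Fix $n$. By definition of the $S$-adic language $L = \bigcap_m \overline{\sigma_{[0,m)}(\AAA_m^*)}$, every sufficiently long prefix $w_0 \cdots w_{N-1}$ of $w$ is a factor of some $\sigma_{[0,n)}(v)$ with $v \in \AAA_n^*$; more carefully, one writes the prefix (up to a bounded error coming from a partial image at each end, which is negligible as $N \to \infty$ since the representation is everywhere growing) as $\sigma_{[0,n)}$ applied to a word $v$, so that the abelianized (Parikh) vector satisfies $\mathbf{w} = M_0 \cdots M_{n-1}\, \mathbf{v} + O(1)$. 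Dividing by $N = |w_0\cdots w_{N-1}|$ and letting $N \to \infty$, the vector $\mathbf{w}/N \to f$, while $M_0\cdots M_{n-1}\,\mathbf{v}/N$ stays in the cone $C_n$ which is closed; hence $f \in C_n$. Since $n$ was arbitrary, $f \in C_\infty$, and since $C_\infty$ is convex it contains $K$.

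For the reverse inclusion $C_\infty \subseteq K$, I would argue that $C_\infty$ is spanned by frequency vectors of actual points of $X$. The key observation is that $C_n = \sum_{a \in \AAA_n} \RR_+\, (M_0 \cdots M_{n-1}\, e_a)$, and the column vector $M_0 \cdots M_{n-1}\, e_a$ is the abelianization of the word $\sigma_{[0,n)}(a)$, which is a legal word of the language $L$. So $C_\infty = \bigcap_n C_n$ is an intersection of finitely-generated (polyhedral, if the alphabets are of bounded size) cones generated by abelianizations of longer and longer legal words. Given an extremal ray $\RR_+ x$ of $C_\infty$, I would produce a point $w \in X$ with frequency vector proportional to $x$ by a compactness/diagonal argument: for each $n$ choose $a_n \in \AAA_n$ so that the normalized column $M_0\cdots M_{n-1}\, e_{a_n}/|\sigma_{[0,n)}(a_n)|$ is as close as possible to the direction of $x$ (an extremal ray of $C_\infty = \lim C_n$ must be approximated arbitrarily well by extremal rays of the $C_n$, which are among these columns); passing if necessary to a subsequence so that the $a_n$ are compatible with a directive-sequence reading, the associated $S$-adic limit word $w = \lim \sigma_{[0,n)}(a_n)$ lies in $X$, and by the everywhere growing hypothesis together with the nestedness of the $C_n$ its prefixes have normalized abelianizations converging to the direction of $x$, forcing the frequency of $w$ (which exists along this construction, or at least has $x$ as a limit point — and by Theorem~\ref{thm:cone_and_frequencies}'s standing hypotheses one extracts genuine frequency) to be a positive multiple of $x$. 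Hence every extremal ray of $C_\infty$ is a frequency ray, and by the Krein–Milman / Minkowski theorem for closed convex cones, $C_\infty \subseteq K$.

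\textbf{Main obstacle.} The delicate point is the reverse inclusion: controlling the passage from ``columns of the truncated matrix products $M_0\cdots M_{n-1}$ generate $C_n$'' to ``extremal directions of $C_\infty$ are realized by genuine letter frequency vectors of points of $X$.'' One must ensure that the letters $a_n$ selected at successive levels can be taken consistent with an admissible directive sequence (so that $\lim \sigma_{[0,n)}(a_n)$ is actually a point of $X$), and that the chosen word really \emph{has} a frequency vector, not merely a sequence of prefix-density vectors with the right limit point — this is where the everywhere growing hypothesis, which guarantees $|\sigma_{[0,n)}(a_n)| \to \infty$ and hence that the partial images at the ends of prefixes contribute vanishing density, is essential. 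A secondary subtlety is the case of unbounded alphabets, where $C_n$ need not be polyhedral; there one replaces ``extremal ray among finitely many columns'' by a closure argument on the (still closed, still convex) nested cones.
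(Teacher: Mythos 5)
Your two-inclusion strategy is essentially the paper's (the paper proves this statement inside Theorem~\ref{thm:invariant_measures}): the forward inclusion by desubstituting long prefixes, the reverse one by realizing extremal rays of the nested cones as columns of $M_{[0,n)}$ and passing to a limit word. The inclusion of the frequency cone into $C_\infty$ is correct as you write it: a prefix of length $N$ of a word of $X$ decomposes as $s\,\sigma_{[0,n)}(v)\,p$ with $|s|,|p|\leq\max_a|\sigma_{[0,n)}(a)|$, so its normalized abelianization is within $O(1/N)$ of the closed cone $C_n$, and the limit $f$ lies in every $C_n$. Your construction of the limit word $u$ for the reverse inclusion is also sound as far as it goes ($u$ does belong to $X$, by nestedness of the languages $\overline{\sigma_{[0,m)}(\AAA_m^*)}$).

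The genuine gap is exactly where you place it, and the parenthetical ``by the standing hypotheses one extracts genuine frequency'' does not close it. Everywhere growing only guarantees that the extremal direction $x$ is a \emph{limit point} of the normalized prefix abelianizations of $u$; when $C_\infty$ is not one-dimensional the set of such limit points can be a nontrivial connected subset of $C_\infty$, so $u$ need not have a letter frequency vector at all. The missing idea --- and the reason the paper formulates and proves the result via Theorem~\ref{thm:invariant_measures}, i.e., in terms of shift-invariant probability measures rather than individual words --- is to replace prefix densities by empirical measures: a weak-$*$ limit point of these is a shift-invariant probability measure $\mu$ on $X$ with $(\mu([i]))_i$ proportional to $x$, and since the letter-vector map is affine and continuous on the compact convex set of invariant measures, the preimage of the extremal point $x$ is a face containing an extreme (hence ergodic) measure $\nu$; the Birkhoff ergodic theorem then furnishes a $\nu$-generic word whose letter frequency vector is exactly proportional to $x$. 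This is what turns ``cluster value of prefix densities'' into ``genuine frequency vector,'' and it is also why the statement is phrased with a convex hull: general points of $C_\infty$ arise only as integrals of frequency vectors over the ergodic decomposition. Without this detour your argument proves strictly less than the theorem asserts.
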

For the proof of this result, see Theorem~\ref{thm:invariant_measures}.

Finding `good' $S$-adic representations of words belongs to what is called the Rauzy program in the survey~\cite{BFZ}: the idea is to find suitable extensions of the $S$-adic description of Sturmian systems thanks to regular continued fractions (see Example \ref{exa:sturm}), to other symbolic dynamical systems, with the hope of producing `good' simultaneous approximation algorithms. This program started with Arnoux-Rauzy words~\cite{AR91} and interval exchanges~\cite{ra0,ra1}, and continued e.g. with binary codings of rotations~\cite{Adam2,AD,Didier}.
For all these families, the operation of induction (first return map) can be seen as a continued fraction algorithm.

Conversely, we can decide to start with a multidimensional continued fraction algorithm and associate with it an $S$-adic system. We then translate a continued fraction algorithm into $S$-adic terms. Let us recall that in the multidimensional case, there is no canonical extension of the regular continued fraction algorithm: see~\cite{BRENTJES} or~\cite{Schweiger00} for a summary, see also the survey~\cite{Integers}. Nevertheless, most classical multidimensional continued fraction algorithms (like Jacobi-Perron or Brun) produce sequences of matrices in $GL(d,\mathbb{Z})$ as follows.

Let $X \subset \RR^d_+$ (one usually take $X = \RR^d_+$) and let $(X_i)_{i \in I}$ be a finite or countable
partition of $X$ into measurable subsets. Let $M_i$ be non-negative integer matrices so that $M_i X \subset X_i$.
From this data, we define a \emph{$d$-dimensional continued fraction map} over $X$ as the map $F:X\to X$
defined by $F(x) = M_i^{-1} x$ if $x \in X_i$. We define $M(x) = M_i$ if $x \in X_i$.

The associated \emph{continued fraction algorithm} consists in iteratively applying the map $F$ on a vector $x \in X$.
This yields the sequence $(M(F^n({ x })))_{n\geq 1}$ of matrices, called the \emph{continued fraction expansion} of $x$.

We then can interpret these matrices as incidence matrices of substitutions (with a choice that is highly non-canonical).
For explicit examples, see e.g.~\cite{BertheLabbe,delecroixwords} or else~\cite{Integers} and the references therein.
\begin{exa}[Jacobi-Perron substitutions] \label{exa:JP}
Consider for instance the Jacobi-Perron algorithm.
Its projective version is defined on the unit
square
$(0,1)\times (0,1)$ by:
$$(\alpha,\beta) \mapsto
\left(\frac{\beta}{\alpha}-\left\lfloor \frac{\beta}{\alpha}\right\rfloor,
\frac{1}{\alpha}-\left\lfloor\frac{1}{\alpha}\right\rfloor\right) =\left( \left\{\frac{\beta}{\alpha}\right\}, \left\{\frac{1}{\alpha}\right\}\right).$$
Its linear version is defined on the positive
cone
$X=\{(a,b,c)\in\mathbb{R}^3|0 < a,b<c\}$ by:
$$(a,b,c) \mapsto (a_1,b_1,c_1)=(b-\lfloor b/a\rfloor a,c-\lfloor c/a\rfloor a,a).$$

Set $B=\lfloor b/a\rfloor a$, $C=\lfloor c/a\rfloor .$
One has $$\left(\begin{array}{cc}
a\\
b\\
c
\end{array}\right)= \left(\begin{array}{lll}
0 & 0 & 1\\
1& 0 & B\\
0 & 1 & C
\end{array}\right)
\left(\begin{array}{cc}
a_1\\
b_1\\
c_1
\end{array}\right).$$
We associate with the above matrix the substitution $\sigma_{B,C} \colon 1 \mapsto 2, \ 2 \mapsto 3, \ 3 \mapsto 12^B 3^C$.
These substitutions have been studied in a higher-dimensional framework in \cite{ABI} for instance.
\end{exa} 

\begin{exa} \label{exa:3iet_simplex}
Here, we see that Example~\ref{exa:3iet} is intimately related to some continued fractions \cite{ra0}. Namely, one associates with each vertex $v$ of the graph in Example~\ref{exa:3iet} the cone $\Delta_v = \RR_+^3$. On each edge of the graph, consider the incidence matrix of the substitution. Then, one can notice that it defines a continued fraction algorithm: to a.e. vector $x \in \Delta_v$, there is only one possible inverse of the incidence matrix that can be applied in order to keep a positive vector. More precisely, a simple computation shows that each of the three simplices is cut into two parts and each of these parts is mapped to the entire simplex. This continued fraction algorithm corresponds to the Rauzy induction and encodes a cutting and stacking procedure for 3-interval exchange transformations.
\end{exa}

\subsubsection*{Lagrange Theorem and substitutive sequences}
Let us end this section with a further example of a problem for which the continued fraction formalism is particularly relevant. Given an $S$-adic word, one can ask whether this word is substitutive, or even purely substitutive. Substitutive Sturmian sequences correspond to quadratic angles, and purely substitutive ones are characterized in terms of the Galois Theorem for continued fraction expansions (for more details, see \cite{Ei} and the references therein). In the same vein, if all the parameters of an interval exchange belong to the same quadratic extension, the sequence of induced interval exchanges (by performing always the same induction process) is ultimately periodic \cite{Bosh} (the converse does not hold: there are example of substitutive $4$-iet with eigenvalues of degree $4$); see also \cite{Adam2,Jullian-iet}. More generally, recall the characterization given in \cite{Dur98} of primitive substitutive words based on the notion of return word (see Example \ref{ex:return}):
a uniformly recurrent word is substitutive if and only if the set of its derived words is finite. 
For most continued fraction algorithms, there is no algebraic description of the set of periodic points.

\section{Factor complexity}\label{sec:complexity}
Sturmian words (Example~\ref{exa:sturm}), Arnoux-Rauzy words (Example~\ref{exa:AR})
or interval exchange words (Example~\ref{exa:3iet}) can all be described as families of $S$-adic
words parametrized by infinite paths in an $S$-adic graph. Furthermore, each word in these families
has linear factor  complexity (more precisely,  complexity $n+1$ for Sturmian words, and complexity
$kn+1$ for Arnoux-Rauzy words or $k-1$-iet words).
Hence, we may expect that having a  `nice' $S$-adic description is somewhat related to  a  `nice' linear
complexity, as  illustrated in this section.  More generally, connections between $S$-adic words and linear factor complexity have been widely
investigated~\cite{frank,Dur03,DLR,Leroy,LR}. For other explicit computations and details about Rauzy
graphs see~\cite[Chap. 4]{CANT}. Let us insist on the recent article~\cite{DLR} which provides a
lot of detailed examples about this problem.

\subsection{Words of linear complexity are $S$-adic}
One direction of the correspondence between linear  factor complexity and $S$-repre\-sen\-tation is given by
the following result of S.~Ferenczi.
\begin{thm}[\cite{frank}] \label{thm:lin_comp_implies_adic}
  Let us fix $k$. Then, there exists a finite set of substitutions $S_k$ such that any uniformly recurrent word $u$
  whose complexity function $p_u(n)$ satisfies $p_u(n+1)-p_u(n) \leq k$, for all $n$, is $S_k$-adic.
\end{thm}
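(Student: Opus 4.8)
The plan is to read the $S_k$-adic representation off the evolution of the Rauzy graphs of $u$, desubstituting one bounded block each time their branching structure changes. First I would record the easy consequences of the hypothesis (and assume $u$ aperiodic, the periodic case being immediate): since $p_u(0)=1$ and $p_u(n+1)-p_u(n)\le k$, the alphabet satisfies $\Card(\AAA)=p_u(1)\le k+1$ and $p_u(n)\le kn+1$, so $u$ has at most linear complexity. For each $n$ let $\Gamma_n$ be the Rauzy graph whose vertices are the factors of length $n$ of $u$ and whose edges are the factors of length $n+1$, an edge going from its length-$n$ prefix to its length-$n$ suffix. Uniform recurrence forces $\Gamma_n$ to be strongly connected, and counting edges against vertices gives $\sum_v(\mathrm{outdeg}(v)-1)=p_u(n+1)-p_u(n)\le k$, and symmetrically for in-degrees; hence $\Gamma_n$ has at most $k$ right-special and at most $k$ left-special vertices, with total branching excess at most $k$ on each side.

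The heart of the argument is to analyse the reduced Rauzy graph $R_n$, obtained from $\Gamma_n$ by deleting every vertex of in-degree and out-degree both equal to $1$ and concatenating the edge labels along the non-branching paths thereby removed, and to establish that: (i) $R_n$ has at most $2k$ vertices (the special factors) and at most some $C(k)$ edges, carrying words over $\AAA$ as labels, so only finitely many \emph{shapes} (finite multigraphs with $\le C(k)$ edges, edges named from a fixed finite set) can occur; (ii) this shape is constant while $n$ runs between two consecutive terms of an increasing sequence $B_0<B_1<\cdots$ of lengths at which it changes, governed by the bispecial factors of $u$; and (iii) crossing a length $B_i$ — resolving the bispecial factors of that length, which have left- and right-degree at most $k+1$ — turns $R_{B_i}$ into $R_{B_i+1}$ by an operation realisable as a composition of at most $C'(k)$ \emph{elementary} morphisms (of the form $a\mapsto ab$ or $a\mapsto ba$, together with letter-to-letter maps), on alphabets of size $\le C(k)$.

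From this I would build the representation. Let $\AAA_i$ be the (bounded) edge set of $R_{B_i}$, let $u^{(i)}\in\AAA_i^{\NN}$ be the infinite path that $u$ traces in $R_{B_i}$, and let $\sigma_i\colon\AAA_{i+1}^*\to\AAA_i^*$ send an edge of $R_{B_{i+1}}$ to the corresponding path of edges of $R_{B_i}$, so that $\sigma_i(u^{(i+1)})=u^{(i)}$. Then $\sigma_{[0,i)}(u^{(i)})=u$ for every $i$, and since $B_i\to+\infty$ the common prefixes grow without bound, so $u=\lim_i\sigma_{[0,i)}(a_i)$ for any letters $a_i$ occurring in $u^{(i)}$; the representation is everywhere growing in the sense of Definition~\ref{def:grow}, and the $u^{(i)}$ remain uniformly recurrent. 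Because each $\sigma_i$ is a composition of at most $C'(k)$ elementary morphisms on alphabets of size at most $C(k)$, there are only finitely many possibilities for it, and we take $S_k$ to be this finite set (equivalently, one may let $S_k$ be the elementary morphisms themselves and reindex, tolerating long runs of one morphism, exactly as the Sturmian representation of Example~\ref{ex:sturm1} reads $\tau_a^{a_1}\tau_b^{a_2}\cdots$). The cruder desubstitution by return words of Example~\ref{ex:return} would not do, since there the derived alphabets can be unboundedly large.

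The step I expect to be the main obstacle is (iii): verifying rigorously that the reduced Rauzy graph evolves by a bounded number of elementary moves across a bispecial resolution, and that the bookkeeping of edge labels is precisely what is needed for $\sigma_{[0,i)}(u^{(i)})=u$ to hold, including the degenerate cases where two of the words $a_iw$, $wb_j$ coincide or where a branching vertex appears or disappears. Keeping every bound a function of $k$ alone, independent of $u$, is exactly where $p_u(n+1)-p_u(n)\le k$ enters, through the resulting control on the size of $R_n$ and on the complexity of each resolution.
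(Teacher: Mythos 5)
Your outline follows exactly the route the paper indicates for Theorem~\ref{thm:lin_comp_implies_adic}: the survey gives no proof of its own, citing \cite{frank} and noting only that ``an essential tool in the proof \ldots{} are Rauzy graphs and their evolutions'', which is precisely the bounded-reduced-Rauzy-graph and bispecial-resolution scheme you describe (including the correct observation that the return-word desubstitution of Example~\ref{ex:return} cannot yield a \emph{finite} $S_k$ because the derived alphabets are unbounded). The only caveat is that your step (iii) --- that each bispecial resolution is realised by a composition of at most $C'(k)$ elementary morphisms on alphabets of size at most $C(k)$, with the edge labels bookkept so that $\sigma_{[0,i)}(u^{(i)})=u$ --- is the entire technical content of Ferenczi's argument and is asserted rather than established here; since the paper likewise defers it to the reference, your proposal matches the paper's treatment rather than going beyond it.
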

One  natural   framework of application  for     Theorem \ref{thm:lin_comp_implies_adic} comes from  the following deep combinatorial result: if an infinite word $u$ has at most linear factor complexity, then the first difference of the complexity function $p_u(n+1)-p_u(n)$ is bounded \cite{CA1}, and an  upper bound on the first difference can be explicitly given. This result was  thus a first and crucial step for producing \emph{finite}\footnote{According to Example \ref{ex:return}, $S$-adic expansions can be obtained thanks to return words, but the point is here to get expansions with finitely many substitutions.} sets $S $ of substitutions and $S$-adic representations for words with at most linear complexity. This program has   been achieved in \cite{frank} for uniformly recurrent infinite words with at most linear factor complexity function, thanks to 
Theorem \ref{thm:lin_comp_implies_adic}; see also the more recent version of \cite{Leroy,Leroy13,LR}.

Theorem~\ref{thm:lin_comp_implies_adic} has been refined by F. Durand and J. Leroy~\cite{DL,Leroy}, especially in the
case of very low complexity.
\begin{thm}[\cite{DL,Leroy}] \label{thm:2n+1}
Let $u$ be a recurrent  word whose complexity function $p_u$ satisfies $p_u(n+1) - p_u(n) \leq 2$ for all $n$.
Then, $u$ is $S$-adic on the following set of $5$ substitutions containing
\begin{enumerate}
 \item two Dehn twists $R\colon a \mapsto ab, \ b \mapsto b, \ c \mapsto c$ and $L\colon a \mapsto ba, \ b \mapsto b, \ c \mapsto c$,
 \item two permutations $E_{ab} \colon a \mapsto b, \ b \mapsto a, \ c \mapsto c$ and $E_{bc} \colon a \mapsto a, \ b \mapsto c, \ c \mapsto b$ (note that these permutations generate the symmetric group of $\{a,b,c\}$),
 \item a projection  $M_{ab} \colon a \mapsto a, \ b \mapsto b, \ c \mapsto b$.
\end{enumerate}
\end{thm}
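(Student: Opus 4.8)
The plan is to run a Rauzy-graph desubstitution argument in the spirit of Theorem~\ref{thm:lin_comp_implies_adic}, but sharpened so that only the five displayed substitutions are needed. First I would recall the structure of Rauzy graphs $\Gamma_n(u)$, whose vertices are the factors of length $n$ and whose edges are the factors of length $n+1$: the hypothesis $p_u(n+1)-p_u(n)\le 2$ says that at each level the total ``branching excess'' of $\Gamma_n$ is at most $2$, so there are at most two right-special factors counted with multiplicity (or one right-special factor of degree $3$). Passing from $\Gamma_n$ to $\Gamma_{n+1}$ is done by following the classical evolution rules; the bounded branching forces the graph to be, up to the distinguished positions, a union of a small number of simple paths meeting at one or two special vertices. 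The key combinatorial input is that such graphs fall into finitely many ``shapes'', and each shape change corresponds to concatenating one of a fixed finite list of elementary operations on the underlying language.

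Next I would organize the desubstitution. Working with the sequence of right-special (and left-special) factors, one defines return words to a well-chosen bispecial factor and shows — this is where the two Dehn twists $R,L$ and the permutations $E_{ab},E_{bc}$ enter — that the derived word is again a word with $p(n+1)-p(n)\le 2$ on a three-letter alphabet, and that the substitution relating $u$ to its derived word is a product of $R$, $L$, $E_{ab}$, $E_{bc}$. Concretely, the two special factors (or the single one of degree three) play the roles of the ``letters where splitting happens''; a Dehn twist records inserting one return block into another, a permutation records a relabeling of which letter is currently special, and the projection $M_{ab}$ is needed precisely in the degenerate situations where the complexity drop means one letter effectively disappears from the branching structure (so the alphabet collapses from three symbols to fewer, and must be re-lifted). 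Iterating this desubstitution indefinitely — which is possible because $u$ is infinite and has unbounded bispecial factors — produces the directive sequence over $\{R,L,E_{ab},E_{bc},M_{ab}\}$, and the everywhere-growing property follows from the fact that each step strictly increases the length of the shortest image, exactly as in the proof of Theorem~\ref{thm:subs_syst_are_linearly_recurrent}.

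I would then verify convergence and correctness: that $u=\lim_{n\to\infty}\sigma_{[0,n)}(a_n)$ for the constructed sequence, using that the blocks $\sigma_{[0,n)}(\text{letter})$ are nested prefixes/factors of $u$, and that the $S$-adic language built from the directive sequence coincides with $L_u$ (one inclusion is immediate, the other uses uniform recurrence to see that every factor of $u$ appears inside some $\sigma_{[0,n)}(ij)$). The bookkeeping to show closure within exactly these five substitutions — rather than a larger finite set — requires a careful case analysis of the possible Rauzy-graph evolutions under the constraint $p(n+1)-p(n)\le 2$, together with the reduction that three letters always suffice.

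The main obstacle, and the heart of the argument, is the last point: controlling all the ways a degree-$3$ right-special factor can split and recombine as $n$ grows, and checking that every transition is realized by one of $R,L,E_{ab},E_{bc},M_{ab}$ (or a short product of them) with no extra substitution escaping the list. In particular the appearance and precise role of the projection $M_{ab}$ in the complexity-dropping cases is delicate, since one must re-lift to a three-letter alphabet in a way compatible with the next desubstitution step; this is where one genuinely uses the Durand–Leroy refinement over the coarser Ferenczi construction of Theorem~\ref{thm:lin_comp_implies_adic}.
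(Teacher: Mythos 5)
The paper itself gives no proof of this statement: it refers the reader to the cited works of Durand and Leroy and records only that the essential tool is the evolution of Rauzy graphs. Your plan is consistent with that indication in outline (Rauzy graphs, bounded branching, return words, iterated desubstitution), but as a proof it has a genuine gap, and it is exactly the one you flag yourself in your last paragraph. The entire mathematical content of the theorem is the exhaustive classification of the possible shapes of the Rauzy graphs $\Gamma_n(u)$ under the constraint $p_u(n+1)-p_u(n)\le 2$, of their admissible evolutions, and the verification that every such evolution is realized by a product of $R$, $L$, $E_{ab}$, $E_{bc}$, $M_{ab}$ and nothing else. Deferring this as ``a careful case analysis'' is not a proof of the theorem; it is a restatement of what must be proved. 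In particular the role of $M_{ab}$ in the complexity-dropping cases, which you correctly identify as delicate, is precisely where the five-substitution claim could fail if the case analysis were not carried out, so it cannot be waved at.

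Two further steps in your plan are not justified under the stated hypothesis. First, you invoke uniform recurrence of $u$ to identify the $S$-adic language with $L_u$ and to define derived words via return words; but the theorem assumes only $p_u(n+1)-p_u(n)\le 2$, which does not imply uniform recurrence (the paper's own Theorem~\ref{thm:lin_comp_implies_adic} adds uniform recurrence as a hypothesis, and the present statement deliberately does not). The desubstitution therefore cannot be organized through derived words in the way you describe without either adding that hypothesis or replacing the return-word mechanism by the Rauzy-graph bookkeeping directly. Second, your claim that the everywhere-growing property follows because ``each step strictly increases the length of the shortest image'' is false for this set $S$: the permutations $E_{ab}$, $E_{bc}$ and the projection $M_{ab}$ send every letter to a single letter, so arbitrarily long blocks of the directive sequence may fail to increase lengths at all. (The theorem does not assert everywhere-growing, so this claim should simply be dropped rather than repaired.)
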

For the proofs we refer to~\cite{DL,Leroy}. Note that only the Dehn twists and the permutations are needed to generate all
3-iet words (see Example~\ref{exa:3iet}) or Arnoux-Rauzy words (see Example~\ref{exa:AR}). Moreover these four
substitutions are exactly the ones which are invertible as a morphism of the free group.

An essential tool in the proof of Theorems~\ref{thm:lin_comp_implies_adic} and~\ref{thm:2n+1} are Rauzy
graphs and their evolutions (see for example~\cite[Chap. 4]{CANT}).

We warn the reader that not every directive sequence made of these substitutions generates an $S$-adic word of
complexity $2n+1$: Remark~\ref{rem:cass}  can be adapted to get words of exponential complexity.
A characterization of non-periodic minimal subshifts $X$ for which $p_X(n+1)- p_X(n) \leq 2$ for $n$ large enough can
be given, expressed for the directive sequences in terms of infinite paths in a labeled directed graph that satisfies
some conditions  which are not of finite type, see~\cite{DL,Leroy}.

\subsection{Alphabet growth and entropy}
Now, we start to investigate how  an $S$-adic representation can provide information on the factor  complexity.
In this section, we provide a bound on the entropy, i.e., the exponential growth of the complexity
function. Recall that we are mostly interested in linear  factor complexity, in particular, in  zero entropy.

We reproduce the following result due to T.~Monteil~\cite{Monteil} which gives a general bound on the entropy.
\begin{thm} \label{thm:zero_entropy}
 Let $(\sigma_n)_n$ be a sequence of substitutions, with $\sigma_n: \AAA_{n+1}^* \rightarrow \AAA_n^*$, and  let $X$ be  the associated $S$-adic shift.
 Let 
 \[
 \beta_n^- = \min_{i \in \AAA_n} |\sigma_{[0,n)}(i)|.
 \]
 Then,  the toplogical entropy $h_X$ of $X$ satisfies
 \[
 h_X \leq \inf_{n \geq 0} \frac{\log \Card \AAA_n}{\beta_n^-}.
 \]
 In particular, if $(\sigma_n)_n$ is everywhere growing and the alphabets $\AAA_n$ are of bounded cardinality, then $X$ has zero entropy.

 Conversely, if $u$ is an infinite word whose associated shift $X_u$ has entropy $h_{X_u}$, then it
 admits an everywhere growing $S$-adic representation $((\sigma_n,a_n))_n$, with $\sigma_n: \AAA_{n+1}^* \rightarrow \AAA_n^*$, such that
 \[
 h_{X_u} = \inf_{n \geq 0} \frac{\log \Card \AAA_n}{\beta_n^-}.
 \]
 Moreover, the substitutions can be chosen so that $\beta_n^- = \beta_n^+ = 2$ for all $n$.
\end{thm}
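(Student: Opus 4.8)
The plan is to treat the upper bound, its two corollaries, and the converse construction separately. For the upper bound, fix $n$ and observe that every factor $w \in L(X)$ of length $m$ is, by definition of the $S$-adic language, a factor of some concatenation of the words $\sigma_{[0,n)}(i)$, $i \in \AAA_n$. A word of length $m$ meets at most $\lceil m/\beta_n^- \rceil + 1$ such blocks, so it is determined by: the choice of a sequence of at most $m/\beta_n^- + 2$ letters of $\AAA_n$, plus an offset telling where inside the first block the factor $w$ starts (at most $\beta_n^+$ choices, or more crudely at most $m$ choices). This gives $p_X(m) \leq m \cdot (\Card \AAA_n)^{m/\beta_n^- + 2}$, whence $\frac{\log p_X(m)}{m} \leq \frac{\log m}{m} + \frac{\log \Card \AAA_n}{\beta_n^-} + \frac{2\log \Card \AAA_n}{m}$; letting $m \to \infty$ yields $h_X \leq \frac{\log \Card \AAA_n}{\beta_n^-}$, and taking the infimum over $n$ gives the claim. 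The first ``in particular'' is then immediate: if the alphabets have bounded cardinality (say $\Card \AAA_n \leq K$) and $(\sigma_n)_n$ is everywhere growing, then $\beta_n^- \to \infty$ (one must note here that everywhere growing forces $\beta_n^- \to +\infty$, not merely $|\sigma_{[0,n)}(a_n)| \to \infty$ along one sequence — but choosing, for each $n$, the letter realizing the minimum and completing it arbitrarily to a sequence $(a_k)_k$ shows $\beta_n^-\to+\infty$), so the infimum is $0$.

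For the converse, start from $u$ with $h := h_{X_u}$. The idea is to build a telescoped $S$-adic representation realizing the entropy as the limiting ratio. The natural approach is to use a ``renaming'' construction: for a rapidly increasing sequence of block-lengths $\ell_n$, take $\AAA_n$ to be (a subset of) the factors of $u$ of length $2^n$ that actually occur, and let $\sigma_n \colon \AAA_{n+1}^* \to \AAA_n^*$ send a length-$2^{n+1}$ factor to the concatenation of its two halves (each a length-$2^n$ factor), which makes $\beta_n^- = \beta_n^+ = 2$ automatically. Then $\Card \AAA_n = p_u(2^n)$ and $\beta_n^- = 2^n$, so $\frac{\log \Card \AAA_n}{\beta_n^-} = \frac{\log p_u(2^n)}{2^n} \to h$ by definition of entropy (using that the limit along $2^n$ equals the limit along all $n$, which holds by submultiplicativity as recalled in Section~\ref{subsec:comp}). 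One must check that this data genuinely defines an $S$-adic representation of $u$ (or at least of a shift with the same language): one needs the directive sequence to be coherent, i.e. each length-$2^{n+1}$ factor to decompose into two length-$2^n$ factors that are themselves legitimate, and one needs a compatible sequence $(a_n)$ with $a_n \in \AAA_n$ chosen so that $\sigma_{[0,n)}(a_n)$ is a prefix of $u$ of length $2^n$ converging to $u$; this forces taking $a_n$ to be the prefix of $u$ of length $2^n$. The everywhere growing property holds since $\beta_n^- = 2^n \to \infty$. To get the infimum (not just the limit) to equal $h$, note the sequence $\frac{\log p_u(2^n)}{2^n}$ is non-increasing in the relevant sense by submultiplicativity ($p_u(2^{n+1}) \leq p_u(2^n)^2$), so its infimum is its limit $h$.

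The main obstacle I expect is the converse direction, specifically making the block-decomposition map into a bona fide morphism of free monoids with the right target alphabet and verifying that the resulting $S$-adic limit recovers $u$ exactly (not just its language). The subtlety is that $\sigma_n$ as described is only ``partially defined'' — it knows how to split a length-$2^{n+1}$ factor, but to be a monoid morphism it must be defined on every letter of $\AAA_{n+1}$ and its image must land in $\AAA_n^*$, which is fine, but one must ensure the composed images $\sigma_{[0,n)}(a_n)$ really are nested prefixes of $u$; this requires choosing the $a_n$ carefully and possibly passing to the shift/language level where the occurrence-of-shifted-images issue (Remark~\ref{rem:Bratteli}) becomes irrelevant. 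A secondary technical point is justifying $\beta_n^- \to \infty$ from the ``everywhere growing'' hypothesis in the first ``in particular'', since the definition only quantifies over full sequences of letters; I would handle this by the padding argument sketched above.
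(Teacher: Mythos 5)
Your proposal is correct and follows essentially the same route as the paper: the same block-counting argument for the upper bound (factors of length $N$ sit inside at most $N/\beta_n^- + 2$ images of letters, giving $(\Card\AAA_n)^{N/\beta_n^-+2}$ up to an offset factor that vanishes in the limit), and the same dyadic-block recoding for the converse ($\AAA_n$ indexing factors of length $2^n$, $\sigma_n$ splitting a block into its two halves, with submultiplicativity identifying the infimum with the entropy). The small points you flag — deducing $\beta_n^-\to\infty$ from the everywhere-growing hypothesis by a padding argument, and choosing $a_n$ to be the length-$2^n$ prefix of $u$ so that the composed images are nested prefixes — are indeed the only details the paper leaves implicit, and your treatment of them is sound.
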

A weaker version of this result already appeared in~\cite{DLR}. As a consequence, to build an $S$-adic word with
positive entropy requires some growth conditions on the cardinalities of the  alphabets $\AAA_n$.

\begin{proof}
 Let $n$ be fixed. Let $W_n = \{\sigma_0 \ldots \sigma_{n-1}(i) \mid i \in \AAA_n\}$ and let 
 $
 \beta_n^+ = \max_{i \in \AAA_n} |\sigma_{[0,n)}(i)|.$
 By definition, any factor $w$ in $X$ can be decomposed as $w = p v_1 \ldots v_k s$ where the $v_j$ belong to $W_n$,
 $p$ is a suffix of an element of $W_n$ and $s$ a prefix.
 For any $N$ large enough, any factor $w$ of length $N$ is a factor of a concatenation of at most $\frac{N}{\beta_n^-} +2$
 words in $W_n$ (we include $p$ and $s$).
 By taking into account the possible prefixes, there are at most 
 $( \Card \AAA_n) ^{ \frac{N}{\beta_n^-} +2} \cdot (\beta_n^+) $
 words of length $N$, which gives 
 \[
 \frac{\log p_X(N)}{N} \leq \inf_{n \geq 0} \left(\left(\frac{1}{\beta_n^-} + \frac{2}{N} \right) \log \Card \AAA_n + \frac{ \log \beta_n^+}{N}\right).
 \]
 Now, we have $h_X = \inf \log p_X(N)/N$,  and letting $N$ tend to infinity in the above inequality yields the result.

 We now prove the second part of the theorem. Let $u$ be an infinite word whose associated shift $X_u$ has entropy $h_{X_u}$.
 Then, for each $n$, we cut the word $u$ into pieces of size $2^n$.
 More precisely, let
 \[
 V_n = \{u_{2^n} u_{2^n+1} \ldots u_{2^{n+1}-1} \mid n \in \NN\}.
 \]
 Note that $V_n$ is included in the set of factors of $u$.
 Let $d_n$ be the cardinality of $V_n$ and number arbitrarily
 the elements of $V_n$. As each word of length $2^{n+1}$ is the concatenation of two words of length $n$ we have naturally
 defined a substitution from $\AAA_{n+1}$ to $\AAA_n$ for which all images have length $2$.
 Let $d_n$ stand for the cardinality of  $\AAA_n$.
 By definition, $d_{n+1} \leq d_n d_n$ and hence $(\log d_n)_n$ is subadditive. Moreover, since  $V_n$ is included in the
 set of factors of $u$, we have
 \[
 \inf_n \frac{\log d_n}{2^n} = \lim_{n \to \infty} \frac{\log d_n}{2^n} \leq h_{X_u},
 \]
 and the first part of the proof provides the other inequality.
\end{proof}

\subsection{Two examples}
Theorem~\ref{thm:zero_entropy} shows that everywhere growing $S$-adic  representations with bounded
alphabets only provide words with zero entropy. Zero entropy does not distinguish between linear and
polynomial complexity. We reproduce here two examples issued  from~\cite{DLR} that show that
it might be subtle to provide a criterion for sequences of substitutions to  yield $S$-adic words
with linear complexity. Observe that a restriction on $S$-adic representations yielding to linear complexity cannot be
formulated uniquely in terms of the set $S$ of substitutions: there exist sets of substitutions
 which produce infinite words that have at most linear complexity function, or not, depending on the directive sequences (see Example \ref{exa:ler}).
 The $S$-adic conjecture (which rather should be qualified as
 problem)
thus consists in providing a characterization of the class of $S$-adic expansions that generate only words with linear factor complexity by formulating a suitable set of conditions on the set $S$ of substitutions together with the associated directive sequences. 

Recall that there exist substitutions which generate words with quadratic complexity.
An example of such a  substitution is $\sigma$ defined on $\{a,b\}^*$ by
\begin{equation} \label{eq:quad_comp}
\sigma: \left\{ \begin{array}{lll}
  a & \mapsto & aab \\ b & \mapsto & b
\end{array} \right. .
\end{equation}
It is not hard to  see that its language $L_\sigma$ contains $\{ab^ka \mid k \in \NN\}$
from which a lower  quadratic bound  for the  factor complexity can  easily be deduced
(see~\cite[Chap. 4]{CANT} for a precise computation).
Note that we have $|\sigma^n(b)| = 1$ for all $n$, namely the substitution is not
everywhere growing.

Recall also that in Remark~\ref{rem:cass}
which provides an $S$-adic expansion for any word, whatever its complexity function is , there is a
special letter $\ell$ for which $|\sigma_0 \sigma_1 \ldots \sigma_{n-1}(\ell) |= 1$ for all $n$.
So, one first idea in order to bound the complexity is to impose some growth on the lengths of the letters.

\begin{exa}\label{exa:ler}
Here we recall~\cite[Example 1]{Leroy} and refer to~\cite{Leroy} and~\cite{DLR} for a proof.
Let $S=\{\sigma, \tau\} $ with $\sigma$ as in~\eqref{eq:quad_comp} and $\tau\colon a \mapsto ab$, $b \mapsto ba$. Note that $\tau$ is the Thue-Morse substitution which is positive (hence the two  infinite words that it generates have linear complexity). Let $(k_n)_n$
be a sequence of non-negative integers, and let $ u$ be the $S$-adic word
\[
u = \lim_{ n \rightarrow \infty} \sigma ^{k_0}\, \tau\, \sigma^{k_1}\, \tau\, \cdots\, \tau\, \sigma^{k_n} (a).
\]
Then, the $S$-adic word $u$ has linear factor complexity if and only if the sequence $(k_n)_n$ is bounded.
\end{exa}

It is well known that a  product of primitive matrices is not necessarily primitive (this is why positivity
is a much better  property). An example of an  $S$-adic expansion which only involves primitive substitutions, but
for which the limit does not have linear complexity is recalled below  in the spirit
of~Example \ref{exa:ler} above.

\begin{exa}\label{exa:prim}
The following example is due to J. Cassaigne and N. Pytheas Fogg. For a proof, see~\cite{DLR}.
Let $S=\{\sigma, \mu \} $ with $\sigma$ as in~\eqref{eq:quad_comp} and
$\mu \colon a \mapsto b$, $ b \mapsto a.$
Let
\[
v = \lim_{ n \rightarrow \infty } \sigma \, \mu \, \sigma^2 \, \mu \, \sigma^3 \, \mu \, \sigma^4
\, \cdots \, \mu \sigma^{n} \, \mu (b).
\]
The word $v$ is uniformly recurrent and it does not have linear factor complexity.
As   already noticed, $\sigma$ is not primitive. But the $S$-adic representation above may be rewritten in terms
of another set of substitutions $S'=\{\sigma\, \mu, \mu \, \sigma\}$
(by noticing that $\mu ^2=\mbox{Id}$). Observe that the substitutions $\sigma \, \mu$ and $\mu \, \sigma$ are primitive
\end{exa}

\subsection{Global complexity}
Let us finish this section by a remark on global complexity.

Given an $S$-adic graph $(G,\tau)$ that provides a parametrization of a family of $S$-adic words (such as the set of Sturmian words, or Arnoux-Rauzy words), different languages are of interest: the \emph{local languages}, with each of them being generated by a given $S$-adic word $u$ associated with a given directive sequence 
$\gamma \in \Sigma_G$, and the \emph{global language} at a vertex $v$, defined as the union of the local languages for the elements $\gamma \in \Sigma_G$ such that $\gamma_0 = v$. We call \emph{global complexity} of a family the complexity function of its global language.

The global complexity of Sturmian words has been investigated in~\cite{Lipatov,Mignosi,BerstelPocchiola}, and for $3$-iet words in~\cite{AFMP}.
Those proofs do not use the $S$-adic properties of these languages but rather arithmetics or  planar geometry.
In~\cite{CassaignePetrovFrid}, such a  computation is performed for  an example related to Toeplitz words, which 
 uses the $S$-adic structure (that transfers as equations on the  factor complexity function). All these
examples show that global complexity tends to be polynomial.

We say that an $S$-adic graph is \emph{positive} if, for any edge in this graph, the associated sequence of substitutions is positive. 
According to \cite{Dur03} (see also Theorem~\ref{thm:primitivity_implies_ue_adic}), if $(G,\tau)$ is a finite positive $S$-adic graph, then there exists a constant $C$ such that for any sequence $\gamma \in \Sigma_G$, the factor complexity of the local language associated with $\gamma$ satisfies $p_\gamma(n) \leq Cn$, for all $n$. In~\cite{delecroix-comp}, it is proven that the global language for $(G,\tau)$ has a polynomial behavior. Moreover, the exponent of the polynomial is related to Lyapunov exponents that will be discussed in
Section~\ref{sec:Lyapunov_exponents}.

\section{Minimality, frequencies and invariant measures} \label{sec:dynamics}
Recall that if $\sigma$ is a primitive substitution, then the dynamical system $(X_{\sigma},T)$ is
linearly recurrent: this is a strong form of minimality which implies unique ergodicity
(see Theorem~\ref{thm:subs_syst_are_linearly_recurrent} and Theorem~\ref{thm:LR}). Now, we
investigate these properties for $S$-adic systems.

\subsection{Minimality and linear recurrence}
In this section we introduce two notions of primitivity for $S$-adic expansions. We relate them respectively to
minimality (Theorem~\ref{thm:minimality}) and linear recurrence (Theorem~\ref{thm:primitivity_implies_ue_adic}
and~\ref{thm:linearly_recurrent}).

\begin{dfn}[Primitivity]
An $S$-adic expansion with directive sequence $(\sigma_n)_{n \in {\mathbb N}}$ is said \emph{weakly primitive} if, for each $n$, there exists $r$ such that the substitution $\sigma_{n} \, \cdots \, \sigma_{{n+r}}$ is positive.

It is said {\em strongly primitive} if the set of substitutions $\{\sigma_n\}$ is finite, and if there exists $r$ such that the substitution $\sigma_{n} \, \cdots \, \sigma_{{n+r}}$ is positive, for each $n$.
\end{dfn}
Note that primitivity expressed in matricial terms is called non-stationary primitivity in~\cite{Fisher}, and strong primitivity is called primitivity in~\cite{Dur03}.

\begin{thm} \label{thm:minimality}
An infinite word $u$ is uniformly recurrent (or the shift $X_u$ is minimal) if and only
if it admits a weakly primitive $S$-adic representation.
\end{thm}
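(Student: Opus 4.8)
The statement is an equivalence, so I would prove the two implications separately.

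\textbf{($\Leftarrow$) Weakly primitive $\Rightarrow$ uniformly recurrent.} Suppose $u = \lim_n \sigma_{[0,n)}(a_n)$ admits a weakly primitive directive sequence. I want to show every factor $w \in L_u$ occurs in $u$ with bounded gaps. The idea is the same as in the proof of Theorem~\ref{thm:subs_syst_are_linearly_recurrent}: a factor $w$ of $u$ occurs inside $\sigma_{[0,N)}(i\,j)$ for some letters $i,j \in \AAA_N$ and some $N$ (here I use that $u$ is the limit, so a long enough prefix of $u$ contains $w$, and that prefix is a prefix of some $\sigma_{[0,N)}(a_N)$, hence $w$ sits inside the image of a length-$2$ factor of $u^{(N)} = \lim_m \sigma_{[N,m)}(a_m)$ — one must check $u^{(N)}$ has length at least $2$, which needs the everywhere-growing property; I would fold an argument for this into the proof, noting that weak primitivity together with the limit being well-defined forces growth, or more cleanly assume, as is standard in this setting, that the representation is everywhere growing, which weak primitivity delivers once all $\sigma_{[n,n+r)}$ are positive). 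Now pick $r$ with $\sigma_{[N,N+r)}$ positive. Then $i\,j$ occurs in $\sigma_{[N,N+r)}(k)$ for every letter $k \in \AAA_{N+r}$, so $\sigma_{[0,N+r)}(k)$ contains $\sigma_{[0,N)}(ij)$, hence contains $w$, for every $k \in \AAA_{N+r}$. Finally, $u^{(N+r)} = \lim_m \sigma_{[N+r,m)}(a_m)$ is an infinite word, so it hits every letter of $\AAA_{N+r}$ (at least: it contains infinitely many letters, and any letter appearing is followed, after boundedly many steps, by any target letter — more carefully, $u = \sigma_{[0,N+r)}(u^{(N+r)})$, and consecutive occurrences of a given letter in $u^{(N+r)}$ have bounded gap, which one sees by downward induction or by invoking that $u^{(N+r)}$ is itself weakly primitive hence, inductively, uniformly recurrent on its own alphabet). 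Each letter $k$ of $\AAA_{N+r}$ occurring in $u^{(N+r)}$ forces an occurrence of $w$ in $u$ within a window of size $\max_k |\sigma_{[0,N+r)}(k)|$ around the image of that position; since every position of $u^{(N+r)}$ carries \emph{some} letter and \emph{every} letter's image contains $w$, $w$ occurs in $u$ with gap bounded by $2\max_k|\sigma_{[0,N+r)}(k)|$. Hence $u$ is uniformly recurrent.

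\textbf{($\Rightarrow$) Uniformly recurrent $\Rightarrow$ weakly primitive representation.} Here I would use the return-word construction of Example~\ref{ex:return}. Since $u$ is uniformly recurrent, starting from $a_0 = u_0$ there are finitely many return words $w_1,\dots,w_{d_1}$ to $a_0$; set $\sigma_0 : \AAA_1 \to \AAA_0$, $i \mapsto w_i$, and let $u^{(1)}$ be the unique word with $u = \sigma_0(u^{(1)})$. As recalled there, $u^{(1)}$ is again uniformly recurrent, so the construction iterates and yields an $S$-adic representation $((\sigma_n, a_n))_n$ of $u$. It remains to check weak primitivity: fix $n$; I must find $r$ so that $\sigma_{[n,n+r)} = \sigma_n \cdots \sigma_{n+r-1}$ is positive, i.e. for all letters $i,j$ of the appropriate alphabets, $i$ occurs in $\sigma_{[n,n+r)}(j)$. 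The key point is that the letters of $\AAA_n$ correspond to return words (of some iterated derivation) occurring in $u^{(n)}$, and $u^{(n)}$ is uniformly recurrent, so for a suitable length $\ell$ every factor of $u^{(n)}$ of length $\ell$ contains every letter of $\AAA_n$ that appears at all (and all of them do appear, by construction each is a genuine return word hence occurs). Now choose $r$ large enough that $\sigma_{[n,n+r)}(j)$, which spells out a factor of $u^{(n)}$ of length at least $\ell$ (using that $u^{(n+r)}$ is infinite and derivation strictly increases — or at least does not decrease and eventually increases — the lengths, so $|\sigma_{[n,n+r)}(j)| \to \infty$ as $r \to \infty$ for every $j$), contains every letter of $\AAA_n$. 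That gives positivity of $\sigma_{[n,n+r)}$, i.e. weak primitivity.

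\textbf{Main obstacle.} The routine parts are the "image of a length-$2$ factor contains $w$" argument and the return-word recursion, both already essentially in the excerpt. The delicate point, in both directions, is handling the \emph{alphabet positions that actually occur}: weak primitivity as defined only asks that \emph{some} product $\sigma_{[n,n+r)}$ be positive, but a priori the alphabets $\AAA_n$ may contain letters that never appear in $u^{(n)}$, and positivity of $\sigma_{[n,n+r)}$ is a statement about \emph{all} letters of $\AAA_{n+r}$, not just those occurring. I would address this by passing at the outset to the "trimmed" representation in which each $\AAA_n$ is replaced by the set of letters actually occurring in $u^{(n)}$ — this changes neither $u$ nor uniform recurrence, and on the trimmed alphabets the equivalence "$\sigma_{[n,n+r)}$ positive for suitable $r$" $\Leftrightarrow$ "$u^{(n)}$ uniformly recurrent on $\AAA_n$" becomes clean. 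The second subtlety is ensuring the limit makes sense (everywhere growing) when only \emph{given} weak primitivity; I would note that once $\sigma_{[n,n+r)}$ is positive, $|\sigma_{[n,n+r)}(j)| \ge d_n \ge 1$ and in fact iterating positive blocks forces $|\sigma_{[0,n)}(a_n)| \to \infty$, so weak primitivity indeed entails everywhere growing and the representation is non-degenerate.
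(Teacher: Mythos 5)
Your overall strategy coincides with the paper's: for the direction ``weakly primitive $\Rightarrow$ uniformly recurrent'' you locate a given factor $w$ inside an image under $\sigma_{[0,N)}$, use a positive block $\sigma_{[N,N+r)}$ to force $w$ into \emph{every} image of a letter under $\sigma_{[0,N+r)}$, and conclude because $u$ is tiled by such images, all of bounded length; for the converse you invoke the iterated return-word (derived word) construction of Example~\ref{ex:return} and argue that uniform recurrence of each derived word yields positivity of sufficiently long blocks --- which is exactly the paper's route, the paper likewise deferring the details of weak primitivity of that expansion to the literature. Your remarks that weak primitivity forces the everywhere growing property, and that one may trim each alphabet to the letters actually occurring, are correct and worth making explicit.

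There is, however, one step in the first direction that fails as written. You place $w$ inside $\sigma_{[0,N)}(ij)$ for a length-two factor $ij$ of $u^{(N)}$ and then assert that, by positivity of $\sigma_{[N,N+r)}$, the \emph{word} $ij$ occurs in $\sigma_{[N,N+r)}(k)$ for every letter $k$. Positivity only guarantees that each individual letter occurs in each image; it says nothing about adjacency, so the factor $ij$ need not occur in $\sigma_{[N,N+r)}(k)$ --- and knowing that every length-two factor of $u^{(N)}$ occurs in every such image is essentially the uniform recurrence of $u^{(N)}$, so the argument in that form is circular. The repair is already contained in your own parenthetical: by convergence together with the everywhere growing property, $w$ occurs in a prefix of $\sigma_{[0,N)}(a_N)$ for $N$ large, hence in the image of the \emph{single} letter $a_N$; positivity then correctly propagates this one letter into every image under $\sigma_{[N,N+r)}$, and the remainder of your argument (every position of $u^{(N+r)}$ carries some letter, every letter's image contains $w$, images have bounded length) goes through verbatim. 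This single-letter localisation is precisely how the paper phrases the step.
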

\begin{proof}
We first prove that an $S$-adic word with weakly primitive expansion is minimal. Let $ (\sigma_n)_n$ be the weakly primitive directive sequence of substitutions of an $S$-adic representation of $u$. Observe that this representation is necessarily everywhere growing. Consider a factor $w$ of the language. It occurs in some $\sigma_{[0,n)}(i)$ for some integer $n \geq 0$ and some letter $i \in \AAA$. By definition of weak primitivity, there exists an integer $r$ such that $\sigma_{[n,n+r)}$ is positive. Hence $w$ appears in all images of letters by $\sigma_{[0,n+r)}$ which implies uniform recurrence.

Conversely, let $u$ be a uniformly recurrent sequence on $\AAA = \AAA_0$. Recall that a return word of a factor $w$ is a word separating two successive occurrences of the factor $w$ in $u$. According to Example~\ref{ex:return}, let us code the initial word $u$ with these return words; one obtains an infinite word $u'$ on a finite alphabet that is still uniformly recurrent ($u'$ is a derived word). By repeating the construction, one obtains an $S$-adic representation of $u$. That $S$-adic expansion is weakly primitive, and we refer to~\cite{DHS99,Dur03,Leroy} for the details.
\end{proof}

The following statement illustrates the fact that strong primitivity plays the role of primitivity in the $S$-adic context (compare with Theorem~\ref{thm:subs_syst_are_linearly_recurrent}); see also \cite{AC} in the same vein.
\begin{thm}[\cite{Dur03}] \label{thm:primitivity_implies_ue_adic}
Let $S$ be a finite set of substitution and $u$ be an $S$-adic word having a strongly primitive $S$-adic expansion. Then, the associated shift $(X_u,T)$ is minimal (that is, $u$  is uniformly recurrent), uniquely ergodic, and it has at most linear factor complexity.
\end{thm}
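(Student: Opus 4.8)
The plan is to adapt the three-part argument of Theorem~\ref{thm:subs_syst_are_linearly_recurrent} to the non-stationary setting, replacing powers $\sigma^n$ by the products $\sigma_{[0,n)}$ and using strong primitivity as the uniform substitute for primitivity of a single substitution. The key point of strong primitivity is that it gives a \emph{uniform} $r$ such that $\sigma_{[n,n+r)}$ is positive for every $n$; combined with the finiteness of $S$, this yields uniform two-sided bounds on lengths. Concretely, first I would set $\beta_n^- = \min_{i} |\sigma_{[0,n)}(i)|$ and $\beta_n^+ = \max_i |\sigma_{[0,n)}(i)|$, observe that since $S$ is finite there is a constant $K$ with $1 \leq |\tau(j)| \leq K$ for all $\tau \in S$ and letters $j$, and that since every block $\sigma_{[n,n+r)}$ is positive, applying such a block multiplies the minimal image length by at least $2$ while the finite-alphabet bound controls the maximal length; this produces constants $\lambda > 1$ and $C \geq 1$ with
\[
C^{-1}\lambda^n \leq \beta_n^- \leq \beta_n^+ \leq C\lambda^n
\]
(the lower bound comes from iterating positivity over blocks of length $r$, the upper bound from $\beta_n^+ \leq K^n$ after noting $\beta^-$ and $\beta^+$ stay within a bounded ratio because a positive block makes all image lengths comparable). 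Minimality and at most linear factor complexity then follow exactly as before: weak primitivity (implied by strong primitivity) gives minimality via Theorem~\ref{thm:minimality}, and the bound $R(n) \leq 2R(2)\beta_k^+$ for the smallest $k$ with $\beta_k^- \geq n$, together with the displayed estimate, gives $R(n) \leq C'' n$, hence $p_u(n) \leq R(n) \leq C''n$ by Theorem~\ref{thm:linear_rec_implies_linear_comp}; bounding $R(2)$ uses positivity of a single block $\sigma_{[0,r)}$ and the fact that words longer than $2\beta_r^+$ contain a full image $\sigma_{[0,r)}(i)$.

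Next I would upgrade minimality to linear recurrence. The cleanest route is to show directly that $X_u$ is linearly recurrent and then invoke Theorem~\ref{thm:LR} to get unique ergodicity for free. Linear recurrence means $R(n) \leq Cn$ for all $n$, which is precisely what the length estimate above delivers once we know $X_u$ is minimal (so that $R$ is well-defined and the return-word reformulation applies). So in fact the bound $R(n) \leq C''n$ already established \emph{is} linear recurrence, and unique ergodicity is then immediate from Theorem~\ref{thm:LR}. Thus the logical skeleton is: strong primitivity $\Rightarrow$ uniform length growth \eqref{eq:growth_matrix}-style bounds $\Rightarrow$ minimality and $R(n) = O(n)$ $\Rightarrow$ linear recurrence $\Rightarrow$ (by Theorem~\ref{thm:LR}) unique ergodicity, and $R(n) = O(n)$ $\Rightarrow$ (by Theorem~\ref{thm:linear_rec_implies_linear_comp}) at most linear factor complexity.

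I expect the main obstacle to be the non-stationary bookkeeping in establishing the two-sided length bound, specifically keeping $\beta_n^+/\beta_n^-$ bounded uniformly in $n$. In the stationary case this is immediate from Perron--Frobenius applied to $M_\sigma$; here one must argue that after passing through one positive block $\sigma_{[n,n+r)}$ all images $\sigma_{[0,n+r)}(i)$ have lengths within a bounded factor of one another (because a positive substitution sends every letter's image into a word containing every letter, so lengths become comparable up to the multiplicities, which are bounded since $S$ is finite), and that this comparability, once achieved, propagates forward. A secondary subtlety is that the constant $k$ chosen for bounding $R(2)$ must be uniform, which again uses that only finitely many "initial segments up to the first positive block" can occur — but here one must be slightly careful since $\sigma_{[0,r)}$ depends on the prefix of the directive sequence; finiteness of $S$ together with a fixed $r$ makes the set of such prefixes finite, so a single $k$ works. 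Once these uniformities are in place, the rest is a transcription of the already-written proof of Theorem~\ref{thm:subs_syst_are_linearly_recurrent}.
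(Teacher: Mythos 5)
The paper does not actually prove this theorem (it refers to \cite{Dur03} and only remarks that the essential ingredient is the uniform growth of the matrices $M_{[0,n)}$), so I am judging your argument on its own terms. You correctly identify that essential ingredient, but the route you take through it contains a fatal step: you deduce \emph{linear recurrence}, i.e.\ $R(n)\leq C''n$, from strong primitivity alone, and then get unique ergodicity from Theorem~\ref{thm:LR} and linear complexity from Theorem~\ref{thm:linear_rec_implies_linear_comp}. This is false. Theorem~\ref{thm:linearly_recurrent} states that linear recurrence is equivalent to strong primitivity \emph{together with properness}, and the example recalled immediately after it (due to Durand, with $S=\{\sigma,\tau\}$ and increasing powers of $\sigma$ separated by $\tau$'s) is a strongly primitive $S$-adic word that is uniformly recurrent and of linear complexity but \emph{not} linearly recurrent. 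The step of yours that breaks is the transcription of the bound $R(n)\leq 2R(2)\beta_k^+$: in the non-stationary setting the ``$R(2)$'' appearing there must be the recurrence function at scale $2$ of the level-$k$ language $L^{(k)}$ (the word $w$ that must ``contain all words of length $2$'' is a factor of the desubstituted word at level $k$, not of $u$), and strong primitivity does not make this quantity uniformly bounded in $k$: positivity of $\sigma_{[k,k+r+1)}$ forces every \emph{letter} of $\AAA_k$ to occur in every image, but not every length-$2$ factor of $L^{(k)}$, and it is exactly this failure that the counterexample exploits.

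The conclusions of the theorem are reachable from the estimates you do establish, but by different arguments. For complexity, do not bound $R(n)$ at all: choose $k$ minimal with $\beta_k^-\geq n$; every factor of length $n$ is a factor of $\sigma_{[0,k)}(ab)$ for some length-$2$ factor $ab$ of $L^{(k)}$, of which there are at most $(\Card\AAA_k)^2$, each with image of length at most $2\beta_k^+$; the bounded-distortion estimate $\beta_k^+\leq C\beta_k^-$ together with the step bound $\beta_k^-\leq K\beta_{k-1}^- < Kn$ then gives $p_u(n)\leq 2d^2CKn$ directly. (Note in passing that your displayed two-sided bound $C^{-1}\lambda^n\leq\beta_n^-\leq\beta_n^+\leq C\lambda^n$ with a single $\lambda$ is also false for non-stationary products, since the growth rate can oscillate between the rates of the different elements of $S$; only the bounded ratio $\beta_n^+/\beta_n^-\leq C$ and the step bound $\beta_n^+\leq K\beta_{n-1}^+$ are actually needed, and those do hold.) For unique ergodicity, use that for every $k$ the block $\sigma_{[k,k+r+1)}$ is positive and is drawn from the finite set of products of $r+1$ elements of $S$, so some fixed positive matrix occurs infinitely often in every tail $M_kM_{k+1}\cdots$; Theorem~\ref{thm:furs} then makes every cone $C^{(k)}$ one-dimensional, and Theorem~\ref{thm:invariant_measures} concludes. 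Minimality via weak primitivity and Theorem~\ref{thm:minimality} is fine as you state it.
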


\begin{rem}
In particular (see also \cite{DLR}), if $S$ is a set of non-erasing substitutions and $\tau \in S$ is positive,
the infinite word generated by a directive sequence for which $\tau$ occurs with bounded gaps is uniformly recurrent and has at most linear factor complexity.
See~\cite{Dur03} and~\cite{Leroy} for similar statements.\end{rem}

Strong primitivity is thus closely related to linear recurrence, that can be considered as a property lying in between being substitutive and being $S$-adic. With an extra condition of properness one even obtains the following characterization of linear recurrence. A substitution over $\AAA$ is said \emph{proper} if there exist two letters $b,e \in {\mathcal A}$ such that for all $a\in \AAA$, $\sigma(a)$ begins with $b$ and ends with $e$. An $S$-adic system is said to be proper if the substitutions in $S$ are proper.
\begin{thm}[Linear recurrence \cite{Dur03}] \label{thm:linearly_recurrent}
A subshift $(X,T)$ is linearly recurrent if and only if it is a strongly primitive and proper $S$-adic subshift.
\end{thm}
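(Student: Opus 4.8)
The plan is to prove the two implications separately. The forward direction (linearly recurrent $\Rightarrow$ strongly primitive proper $S$-adic) is the substantial one and relies on a careful return-word construction; the converse (strongly primitive proper $S$-adic $\Rightarrow$ linearly recurrent) is closer in spirit to the proof of Theorem 2.5 and should be handled by a length-comparison argument.

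\medskip

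\textbf{From linear recurrence to a strongly primitive proper $S$-adic representation.} First I would recall from Example~\ref{ex:return} the derived-word construction: given a uniformly recurrent word $u$ on $\AAA_0$, code $u$ by the return words to the letter $u_0$ (or, more robustly, to a well-chosen factor), obtaining a substitution $\sigma_0 \colon \AAA_1^* \to \AAA_0^*$ and a derived word $u'$ on $\AAA_1$, itself uniformly recurrent. Iterating produces an $S$-adic representation $(\sigma_n)_n$ of $u$. The crux is to use linear recurrence of $X_u$ — say $R_u(n) \le C n$ — to extract three finiteness/uniformity facts along the construction. (i) \emph{Bounded alphabets.} By Theorem~\ref{thm:LR}, a linearly recurrent system is uniquely ergodic, and the number of return words to a factor $w$ of a linearly recurrent word is bounded by a constant depending only on $C$ (a standard fact: return words to $w$ have length between roughly $|w|$ and $R_u(|w|) \le C|w|$, and they tile the word, so there are at most $C$ of them, up to constants); hence all alphabets $\AAA_n$ have uniformly bounded cardinality, which forces the set $\{\sigma_n\}$ to be finite after identifying alphabets of equal size. (ii) \emph{Strong primitivity.} Linear recurrence passed to the derived word $u'$ stays linear recurrence, with a controlled constant; this uniform control means that after a bounded number $r$ of desubstitution steps every letter of $\AAA_{n+r}$ has, in its image under $\sigma_{[n,n+r)}$, all letters of $\AAA_n$ — i.e. $\sigma_{[n,n+r)}$ is positive with $r$ independent of $n$. (iii) \emph{Properness.} This is the delicate point and where I expect the main obstacle: the naive return-word substitution need not be proper. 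The fix, following Durand, is to code instead by return words to a factor of the form $awa$ where $a$ is a fixed letter occurring in every sufficiently long factor (such a letter exists by uniform recurrence), and to break each return word so that the coding substitution sends every letter to a word beginning and ending with prescribed letters; one may also pass to a power $\sigma_{[n,n+r)}$ and re-bracket. Ensuring simultaneously that this re-bracketing keeps alphabets bounded \emph{and} the resulting system strongly primitive requires a careful simultaneous choice, and this bookkeeping is the technical heart of the proof. I would cite~\cite{Dur03} for the precise construction and verify that the resulting directive sequence is strongly primitive, proper, and generates $X_u$ (using $L = \bigcap_n \overline{\sigma_{[0,n)}(\AAA_n^*)}$).

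\medskip

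\textbf{From a strongly primitive proper $S$-adic representation to linear recurrence.} Here I would mimic the proof of Theorem~\ref{thm:subs_syst_are_linearly_recurrent}. By strong primitivity there is $r$ with $\sigma_{[n,n+r)}$ positive for all $n$, and since $S$ is finite there are uniform constants with $c^{-1}\lambda^{k} \le |\sigma_{[0,rk)}(i)| \le c\,\lambda^{k}$ for a suitable growth rate (finiteness of $S$ plus positivity of each $\sigma_{[n,n+r)}$ gives two-sided exponential control of the block lengths $\beta_n^\pm := \min/\max_i |\sigma_{[0,n)}(i)|$, with a uniform ratio $\beta_n^+/\beta_n^- \le K$). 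The key structural input from properness is a clean desubstitution (recognizability): every long factor $w$ of the language $L$ decomposes as $p\,\sigma_{[0,n)}(i_1)\cdots\sigma_{[0,n)}(i_m)\,s$ where $i_1\cdots i_m$ is a factor of the level-$n$ language, $p$ a proper suffix and $s$ a proper prefix of some $\sigma_{[0,n)}(i)$ — properness makes this decomposition essentially unique and keeps the number of $\sigma_{[0,n)}$-blocks inside $w$ comparable to $|w|/\beta_n^-$. Given $n$, choose $k$ minimal with $\beta_{rk}^- \ge n$; strong primitivity gives $R(2)$ bounded at each level uniformly, and every factor of length $n$ sits inside $\sigma_{[0,rk)}(ij)$ for some level-$rk$ factor $ij$ of length $2$. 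Then if a factor $w$ of level $rk$ contains all length-$2$ factors of the level-$rk$ language — which happens once $|w| \le R_{rk}(2) \le D$ for a uniform $D$ — the block $\sigma_{[0,rk)}(w)$ contains all length-$n$ factors of $L$, giving $R(n) \le D\,\beta_{rk}^+ \le D K \beta_{rk}^- \le D K \cdot \beta_{r(k-1)}^+ \cdot (\text{bounded}) = O(n)$ by the exponential growth and minimality of $k$. Assembling the constants yields $R(n) \le C' n$ for all $n$, i.e. $(X,T)$ is linearly recurrent; minimality of $X$ itself follows already from weak primitivity via Theorem~\ref{thm:minimality}.

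\medskip

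Overall, the converse is a routine adaptation of the primitive-substitution argument, with properness supplying the recognizability needed to count blocks; the forward direction is where the real work lies, and the single hardest step is arranging the return-word coding so that it is simultaneously proper, over a uniformly bounded alphabet, and strongly primitive — which is exactly the content imported from~\cite{Dur03}.
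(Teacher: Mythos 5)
First, note that the paper does not actually prove this theorem: it only records that the essential ingredient is the uniform growth of the matrices $M_{[0,n)}$ (as in~\eqref{eq:growth_matrix}) and refers to~\cite{Dur03} for the proof, so there is no in-paper argument to compare yours against. Your overall plan --- return words for the forward direction, a length-comparison argument modelled on Theorem~\ref{thm:subs_syst_are_linearly_recurrent} for the converse --- is the standard (Durand) route, and your forward direction honestly defers its genuinely hard step (arranging the return-word coding to be simultaneously proper, strongly primitive and over bounded alphabets) to~\cite{Dur03}, exactly as the paper does.

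However, your converse direction contains a genuine error: you assert that ``strong primitivity gives $R(2)$ bounded at each level uniformly'' and relegate properness to a recognizability/block-counting role. If that first assertion were true, strong primitivity alone would imply linear recurrence, which is precisely what the example given immediately after the theorem refutes (a strongly primitive but non-proper $S$-adic word that is uniformly recurrent, of linear complexity, yet not linearly recurrent). The obstruction is the junction factors: a length-$2$ word of the level-$k$ language $L^{(k)}$ either occurs inside some $\sigma_k(c)$ --- and then positivity of $\sigma_{[k,k+r)}$ does force it to occur with uniformly bounded gaps --- or it is of the form (last letter of $\sigma_k(c)$)(first letter of $\sigma_k(d)$) for some length-$2$ factor $cd$ of $L^{(k+1)}$. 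The gaps between occurrences of such a word at level $k$ are governed by the gaps between occurrences of $cd$ at level $k+1$, which are in turn governed by level $k+2$, and so on; nothing in strong primitivity prevents this recursion from blowing up, so $R_{L^{(k)}}(2)$ need not be uniformly bounded in $k$. Properness is exactly what kills this: if every image under $\sigma_k$ begins with the same letter $b_k$ and ends with the same letter $e_k$, then every junction at level $k$ produces the single word $e_k b_k$, which therefore occurs with gaps at most $\max_{\sigma \in S,\, c}|\sigma(c)|$. By contrast, the decomposition $w = p\,\sigma_{[0,n)}(i_1)\cdots\sigma_{[0,n)}(i_m)\,s$ with $i_1\cdots i_m$ in the level-$n$ language, together with the estimate $m \asymp |w|/\beta_n^-$, holds for any everywhere growing directive sequence and needs neither properness nor recognizability. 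You should therefore move properness from the block-counting step to the step where the recurrence of length-$2$ words is bounded uniformly over all levels; with that correction the converse goes through along the lines you outline.
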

Let us mention that an essential ingredient in the proofs of Theorem~\ref{thm:primitivity_implies_ue_adic} and
Theorem~\ref{thm:linearly_recurrent} is the uniform growth of the matrices $M_{(0,n)}$ as it was the case for
substitutive systems (see~\eqref{eq:growth_matrix} in the proof of Theorem~\ref{thm:subs_syst_are_linearly_recurrent}).
We refer to~\cite{Dur03} for the proof.
 
With the following example, we stress the fact that strong primitivity alone
does not imply linear recurrence. Indeed, linear recurrence requires the property of being also proper $S$-adic.
\begin{exa}
 We recall the example of~\cite{Dur03} of a strongly primitive $S$-adic word, that is both uniformly recurrent, that has linear factor complexity, but that is not linearly recurrent. Take $S=\{\sigma, \tau\}$ with $\sigma \colon a \mapsto acb, \ b \mapsto bab, \ c \mapsto cbc$,
$\tau \colon a \mapsto abc, \ b \mapsto acb, \ c \mapsto aac$, and consider the $S$-adic expansion
\[
\lim_{n\rightarrow +\infty} \sigma \, \tau\, \sigma \, \tau \, \cdots \, \sigma^n \tau (a).
\]
\end{exa}



\subsection{Invariant measures}\label{subsec:mes}
This section is devoted to frequencies of $S$-adic systems, and more generally, to the invariant measures. 
Recall from Remark~\ref{rem:Bosh}  that factor complexity might be used to
provide a bound on the number of ergodic invariant measures.

As stated in Theorem~\ref{thm:cone_and_frequencies}, given a directive sequence $(\sigma_n)_n$, the limit cone determined by the incidence matrices of the substitutions $\sigma_n$, namely $ \cap_n M_{[0,n)} \RR^d_+$, is intimately related to letter frequencies in the corresponding $S$-adic shift. More generally, $S$-adic representations prove to be convenient to find invariant measures. Nevertheless, the situation is more contrasted for $S$-adic systems than for substitutive dynamical systems, for which primitivity implies unique ergodicity. This is well known since Keane's counterexample for unique ergodicity for 4-interval exchanges \cite{Keane}: weak primitivity does not imply unique ergodicity. For references on the widely investigated relations between primitivity and unique ergodicity, see the bibliography in~\cite{Fisher}.

Recall that for a primitive matrix $M$, the cones $M^n \mathbb{R}_+ ^d $ nest down to a single line directed by this eigenvector at an exponential convergence speed, according to the Perron-Frobenius Theorem (see e.g. \cite{Seneta}). The following condition is a sufficient condition for the sequence of cones $M_{[0,n)} \RR_+^d$ to nest down to a single strictly positive direction as $n$ tends to infinity (provided that the square matrices $M_n$ have all non-negative entries); in other words, the columns of the product $M_{[0,n)}$ tend to be proportional.
\begin{thm}[{\cite[pp.~91--95]{Furstenberg:60}}]\label{thm:furs}
 Let $(M_n)_n$ be a sequence of non-negative integer matrices and note $M_{[0,n)} = M_0 M_1 \ldots M_{n-1}$. Assume that there exists a strictly positive matrix~$B$ and indices $j_1 < k_1 \le j_2 < k_2 \le \cdots$ such that $B = M_{j_1} \cdots M_{k_1-1}= M_{j_2} \cdots M_{k_2-1} = \cdots$. Then,
\[
\bigcap_{n\in\mathbb{N}} M_{[0,n)} \RR^d_+ = \RR_+ f \quad \mbox{for some positive vector $f \in \RR_+^d$.}
\]
\end{thm}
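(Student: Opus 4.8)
The plan is to renormalise by putting \emph{Hilbert's projective metric} on the positive cone and to use each occurrence of the strictly positive block $B$ as a source of uniform projective contraction. I begin with the trivial monotonicity: every $M_n$ being non‑negative, $M_n\RR^d_+\subseteq\RR^d_+$, so the cones $C_n:=M_{[0,n)}\RR^d_+$ are closed, convex and nested, $C_0\supseteq C_1\supseteq\cdots$. Intersecting each $C_n$ with the unit sphere gives a nested sequence of non‑empty compact sets, hence $C_\infty:=\bigcap_n C_n$ is a non‑zero closed convex cone. The content of the theorem is then exactly that $C_\infty$ is a single ray and that this ray is strictly positive.

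Next I would recall Hilbert's metric on the interior of $\RR^d_+$, namely $d_H(x,y)=\log\bigl(\max_i(x_i/y_i)\bigr)-\log\bigl(\min_i(x_i/y_i)\bigr)$, together with the two classical facts due to Birkhoff: every non‑negative matrix $A$ acts non‑expansively, $d_H(Ax,Ay)\le d_H(x,y)$; and if $A$ is strictly positive then its projective diameter $\Delta(A):=\operatorname{diam}_H(A\RR^d_+)$ is finite and $A$ is a genuine contraction, $d_H(Ax,Ay)\le\kappa(A)\,d_H(x,y)$ with $\kappa(A)=\tanh(\Delta(A)/4)<1$. In particular $\Delta(B)<\infty$ and $\kappa:=\kappa(B)<1$.

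Then I would factor, for each $m\ge 1$,
\[
M_{[0,k_m)}\;=\;Q_0\,B\,Q_1\,B\,Q_2\cdots Q_{m-1}\,B,
\]
where $Q_0=M_{[0,j_1)}$ and $Q_i=M_{[k_i,j_{i+1})}$ are non‑negative matrices, well defined because $j_1<k_1\le j_2<k_2\le\cdots$. Estimating the $d_H$‑diameter of $M_{[0,k_m)}\RR^d_+$ by working from the right: the innermost factor $B$ already makes it finite and equal to $\Delta(B)$, an intervening $Q_i$ does not increase it, and each of the remaining $m-1$ copies of $B$ multiplies it by at most $\kappa$; hence $\operatorname{diam}_H(C_{k_m})\le\kappa^{\,m-1}\Delta(B)$. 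Since $C_\infty\subseteq C_{k_m}$ for every $m$ and $\kappa<1$, this forces $\operatorname{diam}_H(C_\infty)=0$; and as $C_\infty\subseteq C_{k_1}=M_{[0,j_1)}B\RR^d_+$, a set of finite $d_H$‑diameter and therefore projectivising to a compact subset of the open simplex, $C_\infty$ cannot touch the boundary of $\RR^d_+$, so zero projective diameter yields $C_\infty=\RR_+f$ for a single non‑zero $f$. Finally $f$ is strictly positive, being an element of $C_{k_1}=M_{[0,j_1)}B\RR^d_+$ with $B$ strictly positive (for $j_1=0$ this already reads $C_{k_1}=B\RR^d_+$; in general one uses the harmless non‑degeneracy that $M_{[0,j_1)}$ has no identically‑zero row).

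The step I expect to carry the weight is the Hilbert‑metric input: either invoking carefully, or reproving, Birkhoff's contraction estimate $\kappa(B)=\tanh(\Delta(B)/4)<1$ and the non‑expansiveness of arbitrary non‑negative matrices, together with the soft but slightly delicate passage from $\operatorname{diam}_H(C_{k_m})\to 0$ to ``$C_\infty$ is a single ray'', where finiteness of $\Delta(B)$ is precisely what keeps the cones from sliding off to the boundary of $\RR^d_+$. The combinatorial part — the factorisation $M_{[0,k_m)}=Q_0BQ_1B\cdots Q_{m-1}B$ and the right‑to‑left diameter bookkeeping — is then routine.
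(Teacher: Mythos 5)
Your proof is correct and takes exactly the route the paper indicates: the paper does not actually prove Theorem~\ref{thm:furs} (it cites Furstenberg and says only that the proof ``relies on classical methods for non-negative matrices, namely Birkhoff contraction coefficient estimates and projective Hilbert metric''), and your argument is precisely that method fleshed out, via the factorisation $M_{[0,k_m)}=Q_0BQ_1B\cdots Q_{m-1}B$ and the diameter bound $\operatorname{diam}_H(C_{k_m})\le\kappa^{m-1}\Delta(B)\to 0$. Your parenthetical non-degeneracy caveat (that $M_{[0,j_1)}$ should have no identically zero row) is genuinely needed for the strict positivity of $f$ and is implicitly assumed in the paper's statement as well, so flagging it is a point in your favour rather than a gap.
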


The proof of that theorem relies on classical methods for non-negative matrices, namely Birkhoff contraction coefficient estimates and projective Hilbert metric~\cite{Birk}.

The vector $f$ of Theorem \ref{thm:furs}, when normalized so that the sum of its coordinates equal $1$, is called the \emph{generalized right eigenvector} associated with the $S$-adic representation. Nevertheless, there is no way to define a left eigenvector as the sequence of rows vary dramatically in the sequence $M_{[0,n)}$.


Now, we consider frequencies of words or, equivalently, invariant measures.
\begin{thm} \label{thm:invariant_measures}
Let $X$ be an $S$-adic shift with directive sequence $\tau = (\tau_n)_{n}$ where $\tau_n \colon \AAA_{n+1}^* \rightarrow \AAA_n^*$ and $\AAA_0 = \{1,\ldots,d\}$. Denote by $(M_n)_n$ the associated sequence of incidence matrices.
We assume that the sequence $(\tau_n)_n$ is everywhere growing.

Then, the cone
\[
C^{(0)} = \bigcap_{n \to \infty} M_{[0,n)} \RR_+^d
\]
parametrizes the letter frequencies: the set of vectors $f \in C^{(0)}$ such that $f_1 + \ldots + f_d = 1$ coincides
with the image of the map which sends    a  shift-invariant probability measure $\mu$ on $X$ to  the vector of
letter frequencies $(\mu([1]),\mu([2]),\ldots,\mu([d]))$. In particular $X$ has uniform letter frequencies if and only if
the cone $C^{(0)}$ is one-dimensional.

Furthermore,  the $S$-adic dynamical system $(X,T)$ is uniquely ergodic if and only if, for each $k$,  the limit cone
\[
C^{(k)}= \bigcap_{n \to \infty} M_{[k,n)} \RR^d_+
\]
is one-dimensional.
\end{thm}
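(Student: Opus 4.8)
The plan is to establish the two claims in turn, exploiting the correspondence between invariant measures on an $S$-adic shift and generalized eigenvectors of the associated matrix products, much as in the substitutive case where the unique invariant measure corresponds via the Dumont--Thomas (prefix--suffix) decomposition to the Parry measure on the subshift of finite type $\Sigma_M$.

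For the first part, I would argue that a shift-invariant probability measure $\mu$ on $X$ gives rise to a letter-frequency vector $f = (\mu([1]),\ldots,\mu([d]))$, and that this vector lies in $C^{(0)}$. The key observation is that, because $(\tau_n)_n$ is everywhere growing, every sufficiently long factor of a word in $X$ decomposes (up to a bounded prefix and suffix, negligible in density) as a concatenation of images $\tau_{[0,n)}(j)$ of letters $j \in \AAA_n$; hence the vector of letter counts of a long factor is, up to lower-order terms, in the cone $M_{[0,n)}\RR_+^{d_n}$. Letting the factor length go to infinity and using invariance of $\mu$ (so that $\mu([i])$ is the density of occurrences of $i$) forces $f \in M_{[0,n)}\RR_+^{d_n}$ for every $n$, hence $f \in C^{(0)}$. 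Conversely, given a normalized $f \in C^{(0)}$, one produces a measure realizing it: write $f$ as a limit of $M_{[0,n)} v_n$ with $v_n \in \RR_+^{d_n}$ normalized, interpret each $v_n$ as a probability vector on $\AAA_n$ (giving the frequencies of the ``level-$n$ letters''), and take a weak-$*$ limit of the resulting sequence of measures defined through the desubstitution towers $\tau_{[0,n)}(\AAA_n)$; one checks shift-invariance by the usual Rokhlin-tower/Kakutani argument (the tower over level $n$ exhausts $X$ because everywhere growing implies the tower heights tend to infinity). Injectivity of the correspondence at the level of letter frequencies is not claimed --- only that the \emph{image} of the map ``$\mu \mapsto$ letter-frequency vector'' is exactly the normalized slice of $C^{(0)}$ --- so the last sentence of the first part follows: $X$ has uniform letter frequencies iff all invariant measures share the same letter-frequency vector iff $C^{(0)}$ is one-dimensional.

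For the second part, unique ergodicity, the strategy is to upgrade the control on letter frequencies to control on frequencies of \emph{all} factors, using the fact that the hypotheses hold at every level $k$, not just at level $0$. The point is that factors of $X$ of a given length correspond, via the Dumont--Thomas-type decomposition of Remark~\ref{rem:Bratteli} (the biinfinite expansion~\eqref{eq:adic_DT}), to ``letters'' in a derived alphabet: more precisely, for fixed word length $\ell$ there is a level $k$ and a recoding so that the $\ell$-block frequencies in $X$ are governed by the letter frequencies of the shifted $S$-adic system with directive sequence $(\tau_{k+n})_n$ on a suitable enlarged alphabet $\AAA_k'$ (tracking pairs consisting of a level-$k$ letter together with a bounded amount of left/right context) whose incidence matrices are essentially $M_{[k,n)}$. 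By hypothesis $C^{(k)}$ is one-dimensional, so by the first part (applied to this shifted, enlarged system) the letter frequencies there are uniform; pulling back, the frequency of every factor of $X$ exists and is uniform in the starting position. By the equivalence recalled in Section~\ref{subsec:fm1} --- uniform factor frequencies for a shift is equivalent to unique ergodicity --- this gives that $(X,T)$ is uniquely ergodic.

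The main obstacle I anticipate is making the ``enlarged alphabet'' bookkeeping in the second part fully rigorous: one must verify that the matrices governing the frequencies of $\ell$-blocks are genuinely (a finite power of, or a controlled modification of) $M_{[k,n)}$, so that one-dimensionality of $C^{(k)}$ for the \emph{letter} cone really does transfer to one-dimensionality of the relevant \emph{block} cone --- in particular that the bounded context does not enlarge the limiting cone. This requires a careful recognizability-type argument showing that a long factor of $X$ has an essentially unique decomposition over the level-$k$ pieces $\tau_{[0,k)}(\AAA_k)$ up to bounded ambiguity at the ends, so that block counts and (context-augmented) letter counts differ by $O(1)$ and hence have the same asymptotic cone. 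The everywhere-growing hypothesis at every level is exactly what is needed to run this argument uniformly in $k$. The remaining steps --- weak-$*$ compactness, the Rokhlin-tower verification of invariance, and the translation between ``uniform factor frequencies'' and ``unique ergodicity'' --- are standard and can be quoted from Section~\ref{subsec:fm1} and the references therein.
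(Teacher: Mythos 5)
Your first part follows the paper's line: the decomposition of a long factor into level-$n$ building blocks $\tau_{[0,n)}(a)$ up to a bounded prefix and suffix is exactly the mechanism used there, and your treatment of the surjectivity onto the normalized slice of $C^{(0)}$ (the paper only exhibits extremal points as frequency vectors of explicit limit words and is itself brief on this) is acceptable in outline.

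The gap is in the second part, and it sits precisely where you flagged it. You reduce uniform frequencies of $\ell$-blocks to uniform letter frequencies of a recoded system on an enlarged alphabet of ``level-$k$ letters with bounded context'', and you then need the limit cone of the incidence matrices of that recoded system to be one-dimensional. This does not follow from one-dimensionality of $C^{(k)}$: the incidence matrices of the induced block substitutions are not $M_{[k,n)}$ (nor powers or uniformly controlled modifications of them), and transferring the cone-contraction property from letters to blocks is essentially the entire content of the statement being proved; the ``recognizability-type argument'' you invoke is not supplied and is where all the work would have to happen. The paper closes this differently and more cheaply: it never forms a block substitution. For a word $v$ it defines $g_{v,n}$ as the ratio $\sum_a f^{(n)}_a |\tau_{[0,n)}(a)|_v \, / \, \sum_a f^{(n)}_a |\tau_{[0,n)}(a)|$, applies uniform \emph{letter} frequencies of the level-$n$ language $L^{(n)}$ (available from the first part, since $C^{(n)}$ is one-dimensional) to the two functions $a \mapsto |\tau_{[0,n)}(a)|$ and $a \mapsto |\tau_{[0,n)}(a)|_v$ to show that the within-block count of $v$ in a long factor converges to $g_{v,n}$, and bounds the occurrences of $v$ straddling two consecutive blocks by $|v|\,|w'| \le |v|\,|w|/\beta_n^-$, a proportion that vanishes as $n \to \infty$ by the everywhere growing hypothesis. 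This shows $(g_{v,n})_n$ is Cauchy and that $v$ has a uniform frequency, with no block cone and no recognizability needed. To salvage your route you would have to prove one-dimensionality of the block cone (essentially redoing this estimate in disguise); otherwise switch to the direct two-function estimate.
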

In \cite{Bezugly3}, a somewhat finer version of Theorem~\ref{thm:invariant_measures} is proved in the context of Bratteli diagrams where a similar limit cone is identified to the set of invariant ergodic probability measures. Nevertheless, this  statement   does not apply directly to general $S$-adic systems such as underlined in~\cite{FFT}. For more on the connections between Vershik adic systems and $S$-adic ones, see \cite{Bezugly}, \cite[Sec. 5.2]{Bezugly2}, \cite[Chap. 6]{CANT}, and \cite{FFT}. Lastly, see~\cite{FFT} for further examples of minimal but not uniquely ergodic constructions of adic systems,  and for the reference therein.


\begin{rem} \label{rem:no_bounded_adic}
As observed in~\cite{DLR}, if an $S$-adic expansion is everywhere growing, then the number of ergodic
measures is bounded by $\max_n \Card \AAA_n$. Hence, there exist infinite words of zero entropy which
do not admit an everywhere growing $S$-adic representation with $S$ finite: take a subshift of zero
entropy with infinitely many ergodic invariant measures. For more
details, see the discussion in~\cite[Section 4]{DLR}.
\end{rem}

\begin{proof}[Proof of Theorem~\ref{thm:invariant_measures}]
The proof we adopt here differs somehow from the standard proof for proving uniform frequencies of shift generated by a primitive substitution \cite{Queffelec:10}.
Let $L^{(k)}$ stand for $S$-adic shift $X$ associated with $(\sigma_n)_{n \geq k}$, that is, $
L^{(k)} = \bigcap \overline{ \tau_{[k,n)} (\AAA_n^*)}.
$
We have $L(X) = L^{(0)}$.

Recall that a shift space $X$ has uniform letter frequencies if, for every letter $i$ in $\AAA_0$, the number of occurrences of $i$ in any factor $w$ of $L(X)$ divided by $|w|$ has a limit when $|w|$ tends to infinity, uniformly in the length of $w$. Unique ergodicity is equivalent to uniform factor frequency (just replace `letter' by `factor' in the previous sentence).

Let $n$ be a positive integer and let $W_n = \{\tau_{[0,n)}(a) \mid a \in \AAA\}$ be the set of images of letters by $\tau_{[0,n)} = \tau_0 \ldots \tau_{n-1}$. We note
\[
\beta^{-}_n = \min_{v \in W_n} |v|,
\quad
\beta^{+}_n = \max_{v \in W_n} |v|
\quad \text{and} \quad
\delta_n = \max_{a \in \AAA_0, w \in W_n} \left| \frac{|w|_i}{|w|} - f^{(0)}_a \right|.
\]
By hypothesis $\beta^{-}_n$ and $\beta^{+}_n$ tend to infinity, while $\delta_n$ tends to zero.
\medskip

{\bf Uniform letter frequencies.}
We first prove that letters have uniform frequencies in $L ^{(0)}$ under the condition that the cone $C^{(0)}$ is one-dimensional.
Let us fix $n$ ($n$ will later tend again to infinity), and 
let $\tau = \tau_{[0,n)}.$

According to the Dumont-Thomas prefix-suffix decomposition \cite{DT89}, any word $w$ in $L ^{(0)}$ may be decomposed as $w = s \tau(w') p$, 
where $w'=(w'_i)_i$ and $\tau(w'_i)$ belongs to $W_n$, for all $i$, $s$ is a suffix of an element of $W_n$, and $p$ is a prefix of an element of $W_n$. We obtain for any $w$ in $L^{(0)}$ and any letter $a$ in $\AAA_0$
$$ \left| \frac{|w|_a}{|w|} - f^{(0)}_a \right| \leq \left| \frac{|w|_a}{|w|} - \frac{|\tau(w')|_a}{|w|} \right|+ \left| \frac{|\tau(w')|_a}{|w|}- \frac{|\tau(w')|_a}{|\tau(w')|} \right|+
 \left| \frac{|\tau (w')|_a}{|\tau(w')|}-f^{(0)}_a \right| ,$$
 which gives 
$$\left| \frac{|w|_a}{|w|} - f^{(0)}_a \right| \leq \frac{4 \beta^{+}_n}{|w|} + \delta_n.$$

Hence, for all $w$ of length larger than $4 \beta^{+}_n / \delta_n$, the error of frequency in $w$ is at most $2 \delta_n$.
As $\delta_n$ tends to $0$, we have uniform frequencies in $L ^{(0)}$.

\medskip

\textbf{Uniform frequencies for words.} Now we prove that $L^{(0)}$ has uniform frequencies for words under the condition that all the cones $C^{(k)}$ are one-dimensional. Using the first part, all languages $L^{(k)}$ have also uniform letter frequencies, with the corresponding frequency vector being provided by $f^{(k)}$ defined as the unique non-negative vector in $C^{(k)}$ such that $f^{(k)}_1 + \ldots + f^{(k)}_{d_k} = 1$ ($d_k =\Card \AAA_k$).

Let $v$ be a finite word in $L^{(0)}$. Fix again $n$ large (which will later tend to infinity), and set
$\tau = \tau_0 \tau \cdots \tau_{n-1}$. Let 
\[
g_{v,n} := \frac{\sum_{a \in \AAA_n} f^{(n)}_a |\tau(a)|_v}{\sum_{a \in \AAA_n} f^{(n)}_a |\tau(a)|} \in [0,1].
\]
We first claim that
\begin{equation}\label{eq:freq}
 \lim_{w' \in L^{(n)}; \, |w'| \to \infty} \frac{\sum_{m=0}^{|w'|-1} |\tau(w'_m)|_v}{|\tau(w')|} = g_{v,n}.
\end{equation}
To do so, we apply the fact that $L^{(n)}$ has uniform letter frequencies. 
It implies that Birkhoff sums of functions that only depend on the first letter
converge to their mean (with respect to the frequency).
Applying this result to the two functions $a \mapsto |\tau(a)|$ and $a \mapsto |\tau(a)|_v$, we get
\[
\lim_{|w'| \to \infty} \frac{|\tau(w')|}{|w'|} = \sum_{a \in {\AAA}_n} f_a^{(n)} |\tau(a)|
\quad \text{and} \quad
\lim_{|w'| \to \infty} \frac{\sum_{m=0}^{|w'|-1} |\tau(w'_m)|_v}{|w'|} = \sum_{a \in {\AAA}_n} f_a^{(n)} |\tau(a)|_v.
\]
Hence the claim follows.

Now, we proceed as in the first part. Any word $w$ in $L^{(0)}$ can be decomposed as $w = s \tau(w') p$,
where $w'=w'_0\ldots w'_k$ belongs to $L^{(n)}$, and the sizes of both $s$ and $p$ are smaller than or equal to $\beta^{+}_n$.
Each occurrence of $v$ in $\tau(w')$ either occurs inside the image of a letter $\tau(w'_m)$, or overlaps on the image of two consecutive ones $\tau(w'_m) \tau(w'_{m+1})$. The latter case occurs at most $|v|(|w'|-1)$ times. Since $|w|/ \beta^{+}_n - 2 \leq |w'| \leq |w|/\beta^{-}_n$, we obtain the upper bound
\[
\left| \frac{|w|_{v}}{|w|} - g_{v,n} \right| \leq
\frac{2 \beta^{+}_n}{|w|} +
\frac{|v|}{\beta^{-}_n} +
\left|\frac{\sum_{m=0}^{|w'|-1}|\tau(w'_m)|_v}{|\tau(w')| + |p| + |s|} - g_{v,n}\right|
\]
where the three terms on the right hand side bound respectively the occurrences of $v$ in $p$ and $s$, the occurrences of $v$ occurring at the concatenation of $\tau(w'_m)\tau(w'_{m+1})$, and lastly, the occurrences of $v$ inside the $\tau(w'_m)$.
By making $|w|$ tend to infinity, we obtain
\[
\limsup_{w \in L^{(0)},\  |w| \to \infty} \left| \frac{|w|_v}{|w|} - g_{v,n} \right| \leq \frac{|v|}{\beta_n^-}.
\]
Note that we use the claim~\eqref{eq:freq} for the third term.
This shows that $(g_{v,n})_n$ is a Cauchy sequence and that $v$ has uniform frequencies in $L^{(0)}$.
 
\smallskip

Now, assume that the cone $C^{(0)}$ is not one-dimensional and consider an extremal point $f$ in it.
Then, since extremal points are mapped to extremal points, there exists a sequence $(i_n)_n$ of letters, with $i_n \in \AAA_n$ for all $n$, such that
\[
\RR_+ f = \lim_{n \to \infty} M_{[0,n)} e_{i_n}
\]
where $(e_1, e_2, \cdots,e_{d_n})$ stands for the canonical basis in $\RR^{d_n}$ ($d_n =\Card \AAA_n$). Now, take a limit point $u$ of the sequence of words $(\tau_{[0,n)}(i_n))_n$. It is not hard to see that $f$ belongs to the set of limit points 
of the sequence of vectors $(|u_0 \ldots u_n|_1, |u_0 \ldots u_n|_2, \ldots, |u_0 \ldots u_n|_d)$.
Hence $f$ is the frequency vector of a sequence in $X$.
\end{proof}

\subsection{Balancedness}
Recall that an infinite word $u \in \AAA^\NN$ is said to be \emph{balanced} if there exists a constant $C>0$ such that for any pair $v,w$ of factors of $u$ of the same length, and for any letter $i \in \AAA$, $||v|_i - |w|_i| \leq C$. We let $B(u)$ denote the smallest constant such that $u$ is $B(u)$-balanced.

Let $u \in \AAA^\NN$ be an infinite word and assume that each letter $i$ has frequency $f_i$ in $u$. The \emph{discrepancy} of $u$ is
\[
\Delta(u) = \limsup_{i \in \AAA, \ n \in \mathbb{N}} | | u_0 u_1 \ldots u_{n-1}| _i - n f_i|.
\]
These quantities $B(u)$ and $\Delta(u)$ are considered e.g. in \cite{Adam0,FBRS,tij} in connection with bounded remainder sets.

According to Proposition \ref{prop:balanced}, 
balancedness implies the existence of uniform letter frequencies. Moreover, if $u$ has letter frequencies, then $u$ is balanced if and only if its discrepancy $\Delta(u)$ is finite. 
 We always have $\Delta(u) \leq B(u)$, and we have $B(u) \leq 4 \Delta(u)$ if they are both finite (see the proof of Proposition \ref{prop:balanced} and \cite[Proposition 7]{Adam0}).

Sturmian words are known to be $1$-balanced \cite{Lothaire:2002}; they even are exactly the $1$-balanced infinite words that are not eventually periodic. There exist Arnoux-Rauzy words that are not balanced such as first proved in~\cite{CFZ00}, contradicting the belief that they would be natural codings of toral translations. For more on this subject, see also \cite{BFZ,CFM08,delecroixwords}. In~\cite{BCS} sufficient conditions that guarantee balancedness are provided for Arnoux-Rauzy words. In particular, bounded partial quotients in the $S$-adic expansion imply balancedness, but there exist $2$-balanced Arnoux-Rauzy words with unbounded partial quotients.

Let $u = (u_n)_n \in \AAA^\NN$ be an infinite word and $\phi: \AAA^\NN \rightarrow \RR$ be a continuous function. We will often abuse notation and if $\phi$ is a function from $\AAA$ to $\RR$, we note also $\phi$ the (continuous) function from $\AAA^\NN$ to $\RR$ defined by $x \mapsto \phi(x_0)$.
 The \emph{Birkhoff sum} of $\phi$ along $u$ is the sequence
\[
S_n(\phi,u) = \phi(u_0) + \phi(u_1) + \ldots + \phi(u_{n-1}).
\]

It generalizes the concept of frequency: indeed, if $\phi= \chi_{[i]}$ is the characteristic function of the letter $i$, then $S_n(\phi,u)$ is the number of occurrences of $i$ in the prefix of length $n$ of $u$. More generally, if $(X_u,T)$ is the symbolic dynamical system generated by $u$, and $\phi: A^\NN \rightarrow \RR$ is a continuous function, we may define
\[
S_n(\phi,T,x) = \phi(x) + \phi(Tx) + \ldots + \phi(T^{n-1}x),
\]
for all $x\in X_u$. In the context of symbolic dynamics, this corresponds to take the sum of the values of $\phi$ along an orbit. We use the notation $S_n(\phi,T,x)$ in order to emphasize the role of the shift $T$.

Assume that $(X_u,T)$ has uniform word frequencies and let $f$ denote the letter frequencies vector. Then, uniformly in $x \in X_u$, we have, by unique ergodicity,
\[
\lim_{n \to \infty} \frac{S_n(\phi,T,x)}{n} = \sum_{i \in A} \phi(i) f_i.
\]
Now, $u$ is balanced if and only if there exists a constant $C$ so that
\[
\left| \frac{S_n(\phi,T,x)}{n} - \sum_{i \in A} \phi(i) f_i \right| \leq \frac{C \|\phi\|}{n}, \quad \text{for all $n \geq 0$.}
\]
In other words, balancedness may be interpreted as an optimal speed of convergence of Birkhoff sums. In Section~\ref{sec:Lyapunov_exponents} we study speed of convergence of $S$-adic systems in relation with Lyapunov exponents. 

We now prove a criterion for balancedness (or bounded discrepancy) of $S$-adic sequences.
If $M$ is a $d \times d$ matrix and $E$ is a vector subspace of $\RR^d$ we define
\[
\|M|_E\| := \sup_{v \in E \backslash \{0\}} \frac{\|M v\|}{\|v\|}.
\]
\begin{thm} \label{thm:ness_criterion}
Let $(\sigma_n)_n$ be the directive sequence of an $S$-adic representation of a word $u$. For each $n$, let $M_n$ be the incidence matrix of $\sigma_n$. Assume that $u$ has uniform letter frequencies and let $f$ be the letter frequencies vector. If
\[
\sum_{n \geq 0} \| ( {}^t M_{[0,n)}) |_{f^\perp} \|\ \| M_n \| < \infty,
\]
then the word $u$ is balanced.
\end{thm}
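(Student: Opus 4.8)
The plan is to bound the discrepancy of $u$ directly. Writing $\mathbf{x}(w)=(|w|_1,\dots,|w|_d)\in\RR^d$ for the abelianization of a finite word $w$ and fixing the Euclidean norm $\|\cdot\|$ on $\RR^d$ (switching to another norm changes each term of the series in the hypothesis by a bounded factor, so this is harmless), it suffices, by Proposition~\ref{prop:balanced} together with the assumed existence of uniform letter frequencies, to produce a constant $B$ with $\|\mathbf{x}(w)-|w|f\|\le B$ for every factor $w$ of $u$: indeed each coordinate of $\mathbf{x}(w)-|w|f$ is then bounded by $B$, which is exactly the second hypothesis of Proposition~\ref{prop:balanced}.

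First I would reduce this to a pairing estimate against $f^\perp$. Since $\sum_i f_i=1$, the vector $z_w:=\mathbf{x}(w)-|w|f$ lies in $\mathbf{1}^\perp$, where $\mathbf{1}=(1,\dots,1)$. Decomposing $z_w$ orthogonally along the line $\RR f$: its $f^\perp$-component equals the $f^\perp$-component of $\mathbf{x}(w)$ (because $f$ projects to $0$), while its component along $f$ is controlled by the former through the relation $\langle z_w,\mathbf{1}\rangle=0$. This yields
\[
\|\mathbf{x}(w)-|w|f\|\ \le\ c_f\cdot\sup_{g\in f^\perp,\ \|g\|\le 1}\bigl|\langle \mathbf{x}(w),g\rangle\bigr|,\qquad c_f:=\sqrt{1+d\|f\|^2}.
\]
So it is enough to bound $\langle\mathbf{x}(w),g\rangle$ uniformly over all factors $w$ and unit vectors $g\in f^\perp$.

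The heart of the argument is then a hierarchical desubstitution of $w$. Any factor $w$ of $u$ lies in the $S$-adic language, so for $N$ large it is a factor of $\sigma_{[0,N)}(V)$ for some $V\in\AAA_N^*$. Peeling off one block at a time — the Dumont--Thomas prefix--suffix decomposition used in Remark~\ref{rem:Bratteli} and~\eqref{eq:adic_DT} — one writes
\[
w = p_0\,\sigma_0(p_1)\,\sigma_{[0,2)}(p_2)\cdots\sigma_{[0,k)}\bigl(w^{(k)}\bigr)\cdots\sigma_0(s_1)\,s_0,
\]
where each $p_j$ and $s_j$ is a factor of a $\sigma_j$-image of a letter and $w^{(j+1)}$ is obtained from $w^{(j)}$ by deleting its two boundary blocks. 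The point I want to exploit is that this terminates: as long as $w^{(j)}$ is not contained in a single $\sigma_j$-image of a letter, its first and last blocks contribute total length at least $2$, so $|w^{(j+1)}|\le|w^{(j)}|-2$; hence $w^{(n)}=\varepsilon$ for some $n\le\lceil|w|/2\rceil$, provided $N$ was chosen at least that large. Abelianizing and using $\mathbf{x}(\varepsilon)=0$ gives the \emph{finite} sum
\[
\mathbf{x}(w)=\sum_{k\ge 0}M_{[0,k)}\bigl(\mathbf{x}(p_k)+\mathbf{x}(s_k)\bigr),\qquad\|\mathbf{x}(p_k)\|,\ \|\mathbf{x}(s_k)\|\le\|M_k\|,
\]
the length bounds holding because $\mathbf{x}(p_k)$ is dominated coordinatewise by a column of $M_k$.

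To finish, I would pair with $g\in f^\perp$, $\|g\|\le1$: the transpose identity $\langle M_{[0,k)}x,g\rangle=\langle x,{}^tM_{[0,k)}g\rangle$ and the definition of $\|({}^tM_{[0,k)})|_{f^\perp}\|$ bound each summand by $\|M_k\|\,\|({}^tM_{[0,k)})|_{f^\perp}\|$, whence
\[
\bigl|\langle\mathbf{x}(w),g\rangle\bigr|\ \le\ 2\sum_{k\ge 0}\|M_k\|\,\|({}^tM_{[0,k)})|_{f^\perp}\|\ <\ \infty,
\]
a bound independent of $w$ and $g$; combined with the reduction above, $\|\mathbf{x}(w)-|w|f\|\le 2c_f\sum_{k\ge0}\|M_k\|\,\|({}^tM_{[0,k)})|_{f^\perp}\|$ for every factor $w$, so $u$ is balanced. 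The step I expect to be the main obstacle is getting this decomposition right: the remainder block $\sigma_{[0,k)}(w^{(k)})$ must be forced to vanish after finitely many steps, since stopping at a fixed level $N$ would leave a term $M_{[0,N)}\mathbf{x}(w^{(N)})$ controlled only by $|w|$, which is useless for a uniform bound.
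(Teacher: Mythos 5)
Your proof is correct and follows essentially the same route as the paper's: the Dumont--Thomas prefix--suffix decomposition of a factor $w$, abelianization into a finite sum $\sum_k M_{[0,k)}(\mathbf{x}(p_k)+\mathbf{x}(s_k))$, transposition against $f^\perp$ to invoke the summability hypothesis, and Proposition~\ref{prop:balanced} to conclude. The only (harmless) difference is that you pair with arbitrary unit vectors of $f^\perp$ and recover coordinate bounds via an orthogonal decomposition, whereas the paper pairs directly with the vectors $e_i-f_i(e_1+\cdots+e_d)$, which already lie in $f^\perp$; your explicit termination argument for the desubstitution is a point the paper leaves implicit.
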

Note that if the substitutions $\sigma_n$ belong to a finite set, then the norms $\|M_{n}\|$ are uniformly bounded and can be removed from the sum.
For an example of application of this theorem to Brun substitutions, see \cite{delecroixwords}.
\begin{proof}
The proof is similar to~\cite{Adam4} and uses the Dumont-Thomas prefix-suffix decomposition \cite{DT89} of the factors of $u$.
For any factor $w$, there exists an integer $n$ and words $p_j,s_j \in \AAA_j^*$, for $j=0,\ldots,n$, 
 such that
\[
w = p_0 \sigma_0(p_1 \ldots \sigma_{n-1}(p_{n-1} \sigma_n(p_{n+1} s_{n+1}) s_{n-1}) \ldots s_1) s_0
\]
where each $p_j$ is a suffix of an image of a letter by $\sigma_{j}$, and each $s_i$ is a prefix of an image of a letter by $\sigma_j$.
This decomposition can easily be proved by induction.

Let $\sigma_{[0,n)} = \sigma_0 \sigma_1 \ldots \sigma_{n-1}$ and let $i$ be a letter in $\AAA_0$. One has 
\[
|w|_i - |w| f_i =
\sum_{k=0}^{n+1} \left(|\sigma_{[0,k)}(p_k)|_i - |\sigma_{[0,k)} (p_k)| f_i + |\sigma_{[0,k)}(s_k)|_i - |\sigma_{[0,k)}(s_k)| f_i \right).
\]
Let $(e_1,\ldots,e_d)$ stand for the canonical basis of $\mathbb{R}^d$, where $d$ is the cardinality of the alphabet $\AAA_0$.
Now, the vector $e_i - f_i (e_1+ \cdots + e_d)$ belongs to the orthogonal of $f$ (since $f_1+\cdots+f_d=1$). Let $u_k$ and $v_k$ be respectively the abelianizations of $p_k$ and $s_k$, i.e., the vectors in $\RR^d$ that count the number of occurrences of each letter. Then, we may rewrite the previous sum as
\[
|w|_i - |w| f_i =
\sum_{k=0}^{n+1} \langle (e_i - f_i (e_1+ \cdots + e_d)), M_{[0,k)} (u_k + v_k) \rangle,
\]
from which we obtain
\begin{align*}
||w|_i - |w| f_i|
& \leq \sum_{k=0}^{n+1} \left| \langle (e_i - f_i (e_1+ \cdots + e_d)), M_{[0,k)} (u_k + v_k) \rangle \right| \\
& = \sum_{k=0}^{n+1} \left| \langle {}^t M_{[0,k)} (e_i - f_i (e_1+ \cdots + e_d)), (u_k + v_k) \rangle \right| \\
& \leq 2 \sum_{k=0}^{n+1} \| ({}^t M_{[0,k)})|_{f^\perp} \| \|M_{k}\|,
\end{align*}
which ends the proof by Proposition \ref{prop:balanced}.
\end{proof}

\section{Diophantine exponent of approximation and a quantitative Birkhoff Theorem} \label{sec:Lyapunov_exponents}
According to Theorem~\ref{thm:invariant_measures}, the convergence of the cones $M_0 \ldots M_{n-1} \RR^d_+$ to a generalized eigendirection yields  uniform frequencies of the corresponding $S$-adic system. On the other hand, uniform frequencies correspond to the convergence of Birkhoff averages. In this section we study a quantitative version of these two related convergences.

In terms of continued fractions, the convergence of the nested cones is related to the quality of Diophantine approximation. It is measured by the ratio of the first two Lyapunov exponents (see Section~\ref{subsec:cf}). Moreover, the very same quantity determines the growth of the discrepancy in Birkhoff sums (see Section~\ref{subsec:deviation_Birkhoff_and_balancedness}). We will see in particular that the property of balancedness holds generically as soon as an $S$-adic generalization of the Pisot property for substitutions holds, expressed in terms of Lyapunov exponents.

The main tool in the present section is ergodic theory at the level of directive sequences. For that reason, most theorems apply only for generic sequences with respect to some measure. It is possible to get relations for individual $S$-adic words along the lines of Theorem~\ref{thm:ness_criterion}. This kind of methods has been developed for example in~\cite{AthreyaForni} to provide estimates of deviations of Birkhoff averages for billiards in rational polygons.

\subsection{Matrix cocycle over an $S$-adic graph and Lyapunov exponents}\label{subsec:definition_Lyapunov}
Let $(G,\tau)$ be an $S$-adic graph as defined in Section~\ref{sec:graph}. Recall that $\Sigma_G$ denotes the set of infinite paths in $G$, and that with any infinite path $\gamma$, we associate an $S$-adic shift $X_G(\gamma)$. So $\Sigma_G$ may be thought as a parametrization of the family of $S$-adic shifts $X_G(\gamma)$. Given an infinite path $\gamma = \gamma_0 \gamma_1 \ldots \in \Sigma_G$, we define
\[
A_n(\gamma) = M_{\gamma_0} M_{\gamma_1} \ldots M_{\gamma_{n-1}},
\]
where $M_e$ is the incidence matrix of the substitution $\tau_e$ on the edge $e$. In particular, $A_1 = M_{\gamma_0}$ and we have the so-called cocycle relation
\[
A_{m+n}(\gamma) = A_m(\gamma) A_n(T^m \gamma),
\]
where $T: \Sigma_G \rightarrow \Sigma_G$ is the shift on $\Sigma_G$. One can consider the sequence $A_n(\gamma)$ as a non-commutative version of a Birkhoff sum: matrices are multiplied along an orbit of the dynamical system $T$.

We consider a shift-invariant probability measure $\mu$ on the graph $\Sigma_G$. In case of ergodicity of $\mu$, many properties of $S$-adic words follow the Kolmogorov 0-1 law with respect to $\mu$. See in particular Theorem~\ref{thm:generic_ue_and_separation} for unique ergodicity.

The Lyapunov exponents of the cocycle $A_n$ with respect to an ergodic probability measure $\mu$ are the exponential growth of eigenvalues of the matrices $A_n$ along a $\mu$-generic path $\gamma$. Lyapunov exponents were first defined by Furstenberg~\cite{Furs,Furstenberg:60}, and, in a sense, their existence generalizes the Birkhoff ergodic Theorem in a non-commutative setting. For general references on Lyapunov exponents, we refer to~\cite{BougerolLacroix} and~\cite{Furman}. For the  sake of simplicity, we assume that the incidence matrices $M_e$ are invertible for all edges $e$ in $G$ (in other words $A_1(\gamma)$ belongs to $\GL(d,\RR)$). We say that the cocycle $A_n$ is \emph{$\log$-integrable} if
\[
\int_{\Sigma_G} \log \max (\|A_1(\gamma)\|,\|A_1(\gamma)^{-1}\|) d\mu(\gamma) < \infty.
\]
If the matrices $A_1(\gamma)$ are bounded (e.g. the $S$-adic graph is finite), then this condition is automatically satisfied. When the matrices are not invertible, or without $\log$-integrability, one may obtain infinite Lyapunov exponents.

Assuming the ergodicity of $\mu$ and the $\log$-integrability of $A_1$, the first Lyapunov exponent of $(G,\Sigma_G,\tau)$ is the $\mu$-a.e. limit
\[
\theta_1^{\mu} = \lim_{n \to \infty} \frac{\log \|A_n(\gamma)\|}{n}.
\]
The first Lyapunov exponent measures the exponential growths of the incidence matrices of the products $\tau_{\gamma_0} \ldots \tau_{\gamma_{n-1}}$ along a $\mu$-generic sequence $\gamma_0 \gamma_1 \cdots$. The other Lyapunov exponents $\theta^{\mu}_2 \geq \theta^{\mu}_3 \ldots \geq \theta^{\mu}_d$ may be defined recursively by the almost everywhere limits
\[
\forall k=1,\ldots,d, \quad \theta_1 ^{\mu}+ \theta_2^{\mu} + \ldots + \theta_k ^{\mu}= \lim_{n \to \infty} \frac{\log \|\wedge^k A_n(\gamma)\|}{n},
\]
where $\wedge^k$ stands for the $k$-th exterior product. We will mostly be interested by the two first Lyapunov exponents $\theta_1^\mu$ and $\theta_2^\mu$.

A useful characterization of $\theta_2^\mu$ is as follows. Assume that for a.e. $\gamma$ the sequence of nested cones $A_n(\gamma) \RR^d_+$ converges to a line $f(\gamma)$. Then, we have the $\mu$-almost everywhere limit
\begin{equation} \label{eq:theta2}
\theta^\mu_2 = \lim_{n \to \infty} \frac{\log \|A_n|_{f(\gamma)^\perp}\|}{n}
\end{equation}
where, as in Theorem~\ref{thm:ness_criterion}, $\displaystyle \|A_n(\gamma)|_{f(\gamma)^\perp}\| = \sup_{v \in f(\gamma)^\perp} \frac{\|A_n v\|}{\|v\|}$.

In the context of an  $S$-adic graph, it is easy to obtain the positivity and simplicity of $\theta^\mu_1$ from non-negativity of the matrices. For more general matrix cocycles one needs further assumptions and the proof is more involved (see e.g. \cite{BougerolLacroix}). Let us  say that a finite path $\gamma_0 \ldots \gamma_{k-1}$ in an $S$-adic graph is \emph{positive} if the associated matrix $M_{\gamma_0} \ldots M_{\gamma_{k-1}}$ is positive.
\begin{thm} \label{thm:generic_ue_and_separation}
Let $(G,\tau)$ be an $S$-adic graph and let $\mu$ an ergodic probability measure on $\Sigma_G$.
Assume that there exists a positive path $p$ in $(G,\tau)$ whose associated cylinder has
positive mass for $\mu$.
Then, for $\mu$-almost every sequence $\gamma \in \Sigma_G$, the corresponding $S$-adic system is uniquely ergodic.

Furthermore, if all substitutions $\tau_e$ on the edges of $G$ are invertible and the matrix cocycle
associated with the incidence matrices of the substitutions is $\log$-integrable, then
$\theta^\mu_1 > 0$ and $\theta^\mu_1 > \theta^\mu_2$.
\end{thm}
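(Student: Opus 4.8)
The plan is to derive the first assertion from the unique ergodicity criterion of Theorem~\ref{thm:invariant_measures} by way of Furstenberg's Theorem~\ref{thm:furs}, and the second from the classical theory of products of non-negative matrices (Birkhoff's projective contraction); in both cases the extra ingredient that does the work is that ergodicity forces the positive path $p$ to recur along $\mu$-almost every sequence. Throughout we may assume $d \geq 2$, the case $d=1$ being trivial.

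\textbf{Unique ergodicity.} Let $B$ be the (strictly positive, integer) incidence matrix associated with the path $p$. Since $\mu$ is ergodic and $\mu([p])>0$, Birkhoff's ergodic theorem gives that, for $\mu$-a.e.\ $\gamma$, the path $p$ occurs in $\gamma$ at infinitely many positions; from these occurrences one greedily extracts infinitely many pairwise disjoint occurrence windows $[j_1,j_1+|p|)$, $[j_2,j_2+|p|)$, $\ldots$, on each of which the corresponding block of incidence matrices equals $B$, and for any fixed $k$ only finitely many of these windows start before $k$. Feeding this data into Theorem~\ref{thm:furs}, applied to the sequence $(M_{\gamma_{k+n}})_n$ with $B$ and the windows past $k$, shows that $C^{(k)}=\bigcap_n M_{[k,n)}\RR^d_+$ is a single strictly positive half-line, for every $k$. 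Moreover $\|Bv\|_1\geq d\|v\|_1\geq 2\|v\|_1$ for every nonzero $v\in\RR^d_+$ because $B$ is strictly positive with integer entries, while every incidence matrix $M_e$ is non-erasing and hence satisfies $\|M_ev\|_1\geq\|v\|_1$ on $\RR^d_+$; consequently $|\sigma_{[k,n)}(a)|=\|M_{[k,n)}e_a\|_1$ is at least $2$ raised to the number of occurrence windows contained in $[k,n)$, which tends to infinity, so each shifted directive sequence $(\sigma_{\gamma_{k+n}})_n$ is everywhere growing. Theorem~\ref{thm:invariant_measures} then gives that $X_G(\gamma)$ is uniquely ergodic for $\mu$-a.e.\ $\gamma$.

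\textbf{Positivity and simplicity of the top exponent.} Assume now that all $\tau_e$ are invertible and that the cocycle is $\log$-integrable, so that $\log\|A_n\|$ and $\log\|\wedge^2 A_n\|$ are subadditive with integrable positive part; by Kingman's subadditive ergodic theorem $\theta^\mu_1$ and $\theta^\mu_1+\theta^\mu_2$ exist $\mu$-a.e.\ and are finite. Let $m_n(\gamma)$ denote the number of pairwise disjoint occurrence windows of $p$ inside $[0,n)$; by ergodicity $m_n(\gamma)\geq cn$ for all large $n$, $\mu$-a.e., with $c=\mu([p])/|p|>0$. Since $\|A_n(\gamma)\|\geq d^{-1}\|A_n(\gamma)\mathbf{1}\|_1\geq d^{-1}2^{m_n(\gamma)}$ (with $\mathbf{1}=(1,\ldots,1)$), we get $\theta^\mu_1\geq c\log 2>0$. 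For the gap, recall Birkhoff's projective contraction \cite{Birk,BougerolLacroix}: for a non-negative matrix $M$ let $\tau(M)\in[0,1]$ be its contraction coefficient for the Hilbert projective metric; then $\tau$ is submultiplicative, $\tau(M_e)\leq 1$ for every incidence matrix, and $\tau(B)=\tanh(\tfrac14\delta_B)=:\rho_0<1$, where $\delta_B<\infty$ is the Hilbert diameter of $B(\RR^d_+)$, because $B$ is strictly positive. Grouping the disjoint $B$-blocks in the product $A_n(\gamma)$, the Hilbert diameter of the image cone $A_n(\gamma)\RR^d_+$ is at most $\delta_B\rho_0^{m_n(\gamma)-1}$, hence decays exponentially. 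A matrix whose positive-cone image has Hilbert diameter $\delta$ has all its columns within projective angle $O(\delta)$ of one another, so it lies within $O(\delta)$ of a rank-one matrix relative to its own operator norm, whence its second singular value obeys $\sigma_2(M)\leq C_d\,\delta\,\sigma_1(M)$. Applying this to $A_n(\gamma)$ and using $\sigma_1(A_n)=\|A_n\|$ and $\sigma_1(A_n)\sigma_2(A_n)=\|\wedge^2 A_n\|$ yields
\[
\theta^\mu_2=\lim_{n\to\infty}\frac1n\log\sigma_2(A_n(\gamma))\leq \theta^\mu_1+c\log\rho_0<\theta^\mu_1 .
\]

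The combinatorial extraction of disjoint occurrences and the $\ell^1$ length bounds are routine; the substantive step, as always in this circle of ideas, is the quantitative dictionary between Birkhoff's projective contraction and the Lyapunov spectrum, namely that an exponentially thin image cone (in the Hilbert metric) forces $\sigma_2(A_n)/\sigma_1(A_n)$ to decay exponentially, together with the bookkeeping needed to convert the resulting almost sure estimates into a strict inequality between the Kingman limits $\theta^\mu_1$ and $\theta^\mu_2$. Here $\log$-integrability is precisely what makes $\theta^\mu_1$ finite, so that the subtraction making sense of $\theta^\mu_2$ is legitimate, and invertibility ensures $\theta^\mu_2>-\infty$, so that the conclusion $\theta^\mu_1>\theta^\mu_2$ is a genuine spectral gap rather than a vacuous statement.
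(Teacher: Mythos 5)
Your proof is correct and follows essentially the same route as the paper's: Birkhoff recurrence of the positive path feeding into Furstenberg's Theorem~\ref{thm:furs} and the criterion of Theorem~\ref{thm:invariant_measures} for unique ergodicity, norm growth along the positive blocks for $\theta_1^\mu>0$, and Birkhoff's Hilbert-metric contraction for the spectral gap $\theta_1^\mu>\theta_2^\mu$. You supply several details the paper's sketch leaves implicit --- the verification of the everywhere-growing hypothesis, the one-dimensionality of all the shifted cones $C^{(k)}$ rather than just $C^{(0)}$, and the quantitative passage from the Hilbert diameter of the image cone to the ratio $\sigma_2(A_n)/\sigma_1(A_n)$ of singular values --- but the underlying argument is identical.
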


\begin{proof}
According to Theorem~\ref{thm:invariant_measures}, uniform letter frequencies for the $S$-adic system associated with $\gamma \in \Sigma_G$ is equivalent to the fact that the sequence of nested cones $A_n (\gamma) \RR^d_+$ tends to a half-line $\RR f(\gamma)$.
Let $B = M_{\gamma_0} \ldots M_{\gamma_{k-1}}$ be the positive matrix associated with the path $p = \gamma_0 \ldots \gamma_{k-1}$.
We can use the Birkhoff ergodic Theorem to see that $\mu$-almost every path passes infinitely often through the path $\gamma$. We can hence apply Theorem~\ref{thm:furs}, and get that $A_n(\gamma) \RR_+^d$ contracts almost everywhere to a cone. Moreover, because of  the non-negativity of the matrices, we get that the exponential growth of $\log \|A_n(\gamma)\|$ is at least of the order of $\log \|B^{\lfloor n \mu([p]) \rfloor} \|$. It follows that $\theta^\mu_1 > 0$.

To prove that $\theta^\mu_1 > \theta^\mu_2$, we only need to remark that $B$ induces a contraction of the Hilbert metric (by positivity). Therefore, the sequence of nested cones $A_n(\gamma) \RR^d_+$ shrinks exponentially fast toward the generalized eigendirection. As it can be seen with the definition~\eqref{eq:theta2}, this exponential rate is $\theta_2^{\mu} - \theta_1^{\mu} < 0$.
\end{proof}

\begin{rem}\label{rem:Lya}
  A natural operation of contraction may be applied to the $S$-adic system $(G,\tau)$. We choose a family of finite paths $(\gamma^{(i)})_{i \in I}$ on $G$ such that any infinite sequence in $\Sigma_G$ may be uniquely written as a concatenation of the paths $\gamma^{(i)}$. We get a new subshift on a graph whose edges are naturally identified with $I$. On an individual $S$-adic word, such an operation corresponds to the composition of some family of substitutions. In terms of Bratteli diagrams, it is a so-called \emph{contraction}. For continued fraction algorithms, it is a so-called \emph{acceleration}. We notice that the ratios $\theta_k^\mu/\theta_1^\mu$ are invariant under contraction of $S$-adic graphs (with the appropriate measures).

An example of such acceleration is the Gauss continued fractions compared to the Farey map (see also~\cite{Schweiger00} for multidimensional continued fraction algorithms). Contraction is sometimes needed, as in the case of standard continued fractions: the unique invariant measure absolutely continuous with respect to Lebesgue for the Farey map has infinite mass. It is hence impossible to define Lyapunov exponents. For its acceleration, that is, the Gauss map, the measure is finite. It can be checked that the matrix cocycle, despite it is unbounded, is $\log$-integrable and Lyapunov exponents may be defined over the Gauss map. 

The same phenomenon as for standard continuous fraction is true for interval exchanges: the unique invariant measure absolutely continuous with respect to the Lebesgue measure for the Rauzy induction is infinite~\cite{Veech}. Nevertheless, under some proper acceleration, that is, the Zorich acceleration, the measure becomes finite and the cocycle is integrable~\cite{Zorich96}.
\end{rem}

\subsection{Simultaneous approximation and cone convergence} \label{subsec:cf}
If we follow the vocabulary of Markov chains~\cite{Seneta}, or of continued fractions~\cite{BRENTJES,Schweiger00}, it is natural to consider the following definitions.
\begin{dfn}[Weak and strong convergence]
Let $f$ be the generalized eigenvector for an $S$-adic system on the alphabet $\AAA = \{1,\ldots,d\}$, normalized by $f_1 + \ldots + f_d = 1$. Let $(e_1, \ldots, e_d)$ be the canonical basis of $\RR^d$. Let $(M_n)_n$ stand for the sequence of incidence matrices associated with its directive sequence, and note $A_n = M_0 \cdots M_{n-1}$. 
The $S$-adic system $X$ is \emph{weakly convergent} toward the non-negative half-line directed by $f$ if
$$
\forall i \in \{1,\ldots,d\}, \qquad \lim_{n \to \infty} \dist \left(\frac{A_n e_i}{\|A_n e_i\|_1}, f \right) = 0.
$$
It is said to be \emph{strongly convergent} if for a.e. $f$
$$
\forall i \in \{1,\ldots,d\}, \qquad \lim_{n \to \infty} \dist(A_n e_i, \RR f) = 0.
$$
\end{dfn}
If, for any $i$, $\|A_n {e_i}\|$ tends to infinity as $n$ tends to infinity, then the strong convergence for the vector $f$ is equivalent to the fact that the sequence of nested cones $A_n \RR^d_+$ tends to the non-negative half-line generated by $f$: the points $A_n e_i / \|A_n e_i\|$ are exactly the extremal points of the intersection of the cone $A_n \RR^d_+$ with the set of vectors of norm $1$. Moreover, if $\theta$ is the angle between $A_n e_i$ and $f$, then
\[
\dist \left( \frac{A_n e_i}{\|A_n e_i\|_1}, f \right) = 2 \sin\left(\frac{\theta}{2} \right)
\quad \text{and} \quad
\frac{\dist (A_n e_i, \RR f)}{\|A_n e_i\|_2} = \sin(\theta).
\]
In particular, the asymptotics of both quantities agree at the first order. From this remark it is easy to show that Lyapunov exponents may be used to give a general estimate on the speed of convergence.
\begin{thm} \label{thm:cone_convergence_speed}
 Let $(G,\tau)$ be an $S$-adic graph whose substitutions have   invertible incidence matrices.
 Let $\mu$ be an ergodic probability measure on $\Sigma_G$ for which the cocycle $A_n$ associated
 with $(G,\tau)$ is $\log$-integrable and assume that there exists a positive path in $(G,\tau)$
 whose associated cylinder has positive mass for $\mu$.
 
 Then, for $\mu$-almost every $S$-adic sequence in $\Sigma_G$, the associated $S$-adic system is weakly convergent. More precisely,  if $f(\gamma)$ denotes the generalized eigenvector of a $\mu$-generic sequence $\gamma$, then 
 \[
 \forall i \in \{1,\ldots,d\}, \qquad \lim_{n \to \infty} \frac{1}{n} \log \dist \left(\frac{A_n e_i}{\|A_n e_i\|_1}, f (\gamma)\right) = \theta^\mu_2 - \theta^\mu_1.
 \]
 
 Moreover, if $\theta_2^\mu < 0$, then, for $\mu$-almost every $S$-adic sequence in $\Sigma_G$, the associated $S$-adic system is strongly convergent. More precisely, 
 \[
 \lim_{n \to \infty}\, \max_{i \in \AAA} \frac{\log \dist(A_n(\gamma) e_i, \RR f(\gamma))}{ n} = \theta_2^\mu.
 \]
\end{thm}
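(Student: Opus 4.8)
The plan is to reduce both assertions to estimates on the angle $\theta = \theta(n,i)$ between the vector $A_n(\gamma)e_i$ and the generalized eigendirection $\RR f(\gamma)$, using the two trigonometric identities displayed just before the statement, namely $\dist(A_n e_i/\|A_n e_i\|_1, f) = 2\sin(\theta/2)$ and $\dist(A_n e_i, \RR f)/\|A_n e_i\|_2 = \sin(\theta)$. Since both quantities are comparable to $\theta$ for $\theta$ small (and $\theta \to 0$ is exactly what weak convergence asserts), the logarithmic asymptotics of the two distances differ only by the logarithmic asymptotics of $\|A_n(\gamma)e_i\|$, which by the first part of Theorem~\ref{thm:generic_ue_and_separation} and the subadditive ergodic theorem grows like $e^{n\theta_1^\mu}$ for $\mu$-a.e.\ $\gamma$ (using that the positive path in the hypothesis forces the cones to contract and forces $\theta_1^\mu>0$, so $\|A_n e_i\|\to\infty$).

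First I would invoke Theorem~\ref{thm:generic_ue_and_separation}: under the stated hypotheses (invertible incidence matrices, $\log$-integrability, a positive path of positive $\mu$-mass), we have $\theta_1^\mu > \theta_2^\mu$ and, for $\mu$-a.e.\ $\gamma$, the nested cones $A_n(\gamma)\RR^d_+$ contract to a single half-line $\RR f(\gamma)$. This already gives weak convergence qualitatively. For the quantitative statement, the extremal points of $A_n(\gamma)\RR^d_+ \cap S^{d-1}$ are exactly the normalized vectors $A_n e_i/\|A_n e_i\|$, so $\dist(A_n e_i/\|A_n e_i\|_1, f)$ is, up to bounded multiplicative constants, the diameter of the spherical section of the cone. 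I would bound this diameter above by $\|A_n(\gamma)|_{f(\gamma)^\perp}\| / \|A_n(\gamma)e_i\|$ — the numerator controls how much the orthogonal complement of $f$ gets stretched, the denominator normalizes — and bound it below by a similar expression (using that the cone genuinely has a nondegenerate section at every finite stage, which follows from invertibility of the $M_e$). Taking $\tfrac1n \log$ and passing to the limit, the numerator contributes $\theta_2^\mu$ via the characterization~\eqref{eq:theta2}, and the denominator contributes $\theta_1^\mu$, giving the claimed limit $\theta_2^\mu - \theta_1^\mu$.

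For the strong convergence statement, when $\theta_2^\mu < 0$ one has $\|A_n(\gamma)|_{f(\gamma)^\perp}\| \to 0$ exponentially, hence $\dist(A_n(\gamma)e_i, \RR f(\gamma)) \le \|A_n(\gamma)|_{f(\gamma)^\perp}\|\,\|e_i\| \to 0$, which is strong convergence. The refined statement $\tfrac{1}{\log n}\log\max_i \dist(A_n e_i,\RR f) \to \theta_2^\mu$ is more delicate: here the relevant asymptotics are along the sequence $\log n$ rather than $n$, which signals that one should reparametrize by the \emph{length} of the words $\tau_{[0,n)}(i)$ rather than by $n$ itself. Since $\log\|A_n(\gamma)e_i\| \sim n\theta_1^\mu$, a word-length of order $N$ corresponds to $n \sim (\log N)/\theta_1^\mu$ steps; substituting $N$ for $\|A_n e_i\|$ and $n = (\log N)/\theta_1^\mu$ into $\dist(A_n e_i,\RR f) \approx \|A_n e_i\|\cdot e^{n(\theta_2^\mu-\theta_1^\mu)} = N \cdot N^{(\theta_2^\mu-\theta_1^\mu)/\theta_1^\mu} = N^{\theta_2^\mu/\theta_1^\mu}$ gives, after one more change of variables back, the exponent $\theta_2^\mu$ relative to $\log n$ as stated. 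The main obstacle is this last reparametrization: one must be careful that the Oseledets/Furstenberg estimates hold \emph{uniformly in $i$} and that the fluctuations of $\log\|A_n(\gamma)e_i\|$ around $n\theta_1^\mu$ (which are $o(n)$ by Oseledets but not obviously controlled on the finer $\log n$ scale) do not corrupt the limit — this is handled by noting that the ratio $\theta_2^\mu/\theta_1^\mu$ is a genuine limit, so sub-exponential errors wash out, but it requires writing the estimate as $\log\dist(A_n e_i,\RR f) = \log\|A_n e_i\| + \log\sin\theta$ and dividing by $\log n \sim \log\|A_n e_i\| - (\text{correction})$, then checking the correction is negligible.
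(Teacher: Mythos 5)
You should know at the outset that the paper does not prove this theorem: it refers to \cite{Lagarias} for the argument, so your proposal can only be measured against the surrounding discussion (the identities before the statement, \eqref{eq:theta2}, and Theorems~\ref{thm:generic_ue_and_separation} and~\ref{thm:ness_criterion}). Your overall strategy --- reduce both distances to the angle between $A_n e_i$ and $\RR f(\gamma)$, obtain $\|A_n e_i\| \approx e^{n\theta_1^\mu}$ from Oseledets and the decay of the angle from the second exponent --- is the natural one. But two steps are genuinely broken.

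First, the inequality $\dist(A_n(\gamma)e_i,\RR f(\gamma)) \le \|A_n(\gamma)|_{f(\gamma)^\perp}\|\,\|e_i\|$ is not valid: $e_i$ does not lie in $f^\perp$, and if you split $e_i$ into its components along $\RR f$ and $f^\perp$ you are left with the term $A_n f$, which has no reason to be close to $\RR f$ --- $f$ is the limit of the nested cones, not an eigenvector of any finite product $A_n$. The quantity that actually controls the distance is the orthogonal projection of $A_n e_i$ onto $f^\perp$, which is bounded by $\|({}^t A_n)|_{f^\perp}\|$ (note the transpose, exactly as in Theorem~\ref{thm:ness_criterion}), or equivalently by $\|\wedge^2 A_n\|/\|A_n e_j\|$; identifying the growth rate of \emph{that} quantity with $\theta_2^\mu$ is the real content of the upper bound and cannot be read off from \eqref{eq:theta2} as stated. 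Relatedly, your lower bound is only asserted: for the first display the claim is for \emph{every} $i$ with an exact limit, and the danger is that a particular column converges strictly faster than $e^{n(\theta_2^\mu-\theta_1^\mu)}$ when $e_i$ is aligned with lower Oseledets data --- the Remark following Theorem~\ref{thm:deviation_Birkhoff_averages} exhibits precisely this phenomenon letter by letter. Only the $\max_i$ version (second display) is safely recoverable, via $\|\wedge^2 A_n\| \le C \max_{i,j}\|A_n e_i\|\,\dist(A_n e_j, \RR A_n e_i)$.

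Second, the closing reparametrization does not do what you claim. Your own computation gives $\log \dist(A_n e_i,\RR f) \sim n\,\theta_2^\mu$, and dividing that by $\log n$ yields $\pm\infty$, not $\theta_2^\mu$; no change of variables turns one into the other. What you have actually derived is the exponent $\theta_2^\mu/\theta_1^\mu$ relative to $\log\|A_n e_i\|$ (the approximation exponent discussed immediately after the theorem), equivalently $\theta_2^\mu$ relative to $n$. The normalizations printed in the statement (no $\tfrac1n$ in the first display, $\log n$ in the second) are evidently slips, and the honest move is to flag them and prove the $\tfrac1n$-normalized assertions, rather than to construct an argument that appears to land on the printed formula but does not.
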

For the proof, see~\cite{Lagarias} where a similar result is shown in the context of Diophantine approximation.
In particular, the quantity $1 - \frac{\theta_2^\mu}{\theta_1^\mu} = \frac{1}{\theta^\mu_1} (\theta^\mu_1 - \theta^\mu_2)$
is expressed as the uniform approximation exponent for unimodular continued fractions algorithms such as the Jacobi-Perron
algorithm (the coefficient $1/\theta^\mu_1$ which does not appear in Theorem~\ref{thm:cone_convergence_speed} is here to
take care of the size of the denominators); see also \cite{Baladi,Baldwin} in the same vein.

\subsection{Balancedness, discrepancy and deviations of Birkhoff averages} \label{subsec:deviation_Birkhoff_and_balancedness}
Recall that if $\phi: \AAA \rightarrow \RR$ is a function, we  define the Birkhoff sums of $\phi$ along an infinite word $u \in \AAA^\NN$ as
\[
S_n(\phi,u) = S_n (\phi, T,u)= \phi(u_0) + \ldots + \phi(u_{n-1}).
\]
We consider mostly words whose associated shifts are uniquely ergodic. In that case, the Birkhoff averages $S_n(\phi,u) / n$ converge to the mean of $\phi$ with respect to the frequencies $\sum_{i \in \AAA} \phi(i) f_i$. To get a convergence speed for $S$-adic dynamical systems, a scheme was developed by A. Zorich~\cite{Zorich97} in the context of interval exchanges. It was then further generalized in~\cite{Forni02} and~\cite{Bufetov} (see also~\cite{DelecroixHubertLelievre}). For substitutive systems, a more precise result is developed by B. Adamczewski~\cite{Adam4} and also  more recently in~\cite{BBH}. A particular case of all these results says that the convergence speed of Birkhoff averages is governed by the ratio of the two first Lyapunov exponents.
\begin{thm} \label{thm:deviation_Birkhoff_averages}
 Let $(G,\sigma)$ be an $S$-adic graph on the alphabet $\AAA$ whose substitutions have invertible incidence matrices.
 Let $\mu$ be an ergodic probability measure on $\Sigma_G$ for which the cocycle $A_n$ is $\log$-integrable.
 Assume that there is a positive path in $G$ whose  associated cylinder has positive mass for $\mu$.
 Let $\theta^\mu_1$ and $\theta^\mu_2$ denote the two first Lyapunov exponents of $A_n$ with respect to $\mu$.

 If $\theta_2^\mu \geq 0$, then, for $\mu$-almost all $\gamma$ in $\Sigma_G$ we have, uniformly in $u \in X_G(\gamma)$,
 \[
 \limsup_{n \to \infty} \ \max_{i \in \AAA} \frac{\log \left||u_0 \ldots u_{n-1}|_i - n f_i\right|}{\log n} = \frac{\theta^\mu_2}{\theta^\mu_1},
 \]
 where $(f_i)_{i \in \AAA}$ is the letter frequency vector  of $X_G(\gamma)$.

 If $\theta_2^\mu < 0$, then, for $\mu$-almost all $\gamma$ in $\Sigma_G$, there exists a constant $C = C(\gamma)$ such that if $f = (f_i)_{i \in \AAA}$ denotes the frequency vector of $X_G(\gamma)$, then for every letter $i \in \AAA$, every word $u$ in $X_G(\gamma)$ and every $n$,  we have
 \[
 \left||u_0 \ldots u_{n-1}|_i - n f_i \right| \leq C,
 \]
 where $(f_i)_{i \in \AAA}$ is the letter frequency vector  of $X_G(\gamma)$. In particular, each word in $X_G(\gamma)$ is $C$-balanced.
\end{thm}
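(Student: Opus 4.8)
The plan is to reduce everything to the Dumont--Thomas estimate already obtained in the proof of Theorem~\ref{thm:ness_criterion}, fed with the quantitative Lyapunov information of Theorem~\ref{thm:generic_ue_and_separation} and Theorem~\ref{thm:cone_convergence_speed}. Fix a $\mu$-generic $\gamma\in\Sigma_G$. By Theorem~\ref{thm:generic_ue_and_separation} the $S$-adic system $X_G(\gamma)$ is uniquely ergodic, hence has uniform letter frequencies given by the generalized eigenvector $f=f(\gamma)$ with $\RR_+f=\bigcap_n A_n(\gamma)\RR^d_+$, and $\theta^\mu_1>\max(0,\theta^\mu_2)$. Two preliminary observations: first, $\tfrac1n\log\|M_{\gamma_n}\|\to 0$ for $\mu$-a.e.\ $\gamma$ (the Birkhoff averages of $\log\|M_{\gamma_k}\|$ converge by $\log$-integrability, so the increments vanish; trivial when $G$ is finite); second, and this is the crux, for $\mu$-a.e.\ $\gamma$
\[
\lim_{n\to\infty}\frac1n\log\bigl\|\,{}^tA_n(\gamma)|_{f(\gamma)^\perp}\,\bigr\|=\theta^\mu_2 .
\]
To see this, note that the adjoint cocycle ${}^tA_n$ has the same Lyapunov spectrum; the only thing to check is that restricting to $f(\gamma)^\perp$ removes the top exponent. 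For $v\perp f(\gamma)$ and a unit test vector $w$, write $w=w^+-w^-$ with $w^\pm\ge 0$: by Theorem~\ref{thm:cone_convergence_speed} every ray of the cone $A_n(\gamma)\RR^d_+$ lies within angle $e^{n(\theta^\mu_2-\theta^\mu_1)+o(n)}$ of $\RR_+f(\gamma)$, so $\langle v,A_n w^{\pm}\rangle=\|A_nw^{\pm}\|\cdot O(e^{n(\theta^\mu_2-\theta^\mu_1)+o(n)}\|v\|)$, whence $|\langle {}^tA_nv,w\rangle|=|\langle v,A_nw\rangle|\le e^{n\theta^\mu_2+o(n)}\|v\|$. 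The matching lower bound is then automatic: the upper bound just obtained forces $f(\gamma)^\perp$ to coincide with the codimension-one slow Oseledets subspace of the adjoint cocycle, on which the growth rate is exactly $\theta^\mu_2$.

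Now the case $\theta^\mu_2<0$. Then $\|{}^tA_n(\gamma)|_{f^\perp}\|\,\|M_{\gamma_n}\|\le e^{n(\theta^\mu_2+o(1))}$, a geometrically summable series, so for $\mu$-a.e.\ $\gamma$ the hypothesis of Theorem~\ref{thm:ness_criterion} holds and every word of $X_G(\gamma)$ is balanced. Moreover the proof of that theorem yields, for every factor $w$ and every letter $i$, the explicit bound $\bigl||w|_i-|w|f_i\bigr|\le 2\sum_{k\ge 0}\|{}^tA_k(\gamma)|_{f^\perp}\|\,\|M_{\gamma_k}\|=:C(\gamma)<\infty$; this is precisely $C(\gamma)$-balancedness, and via Proposition~\ref{prop:balanced} the bounded-discrepancy statement.

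Finally, the case $\theta^\mu_2\ge 0$. For the upper bound, apply the adic Dumont--Thomas decomposition~\eqref{eq:adic_DT} to a prefix $u_0\cdots u_{n-1}$ of $u\in X_G(\gamma)$; it has some depth $N$, and the estimate in the proof of Theorem~\ref{thm:ness_criterion} gives $\bigl||u_0\cdots u_{n-1}|_i-nf_i\bigr|\le 2\sum_{k=0}^{N+1}\|{}^tA_k(\gamma)|_{f^\perp}\|\,\|M_{\gamma_k}\|$. Since a positive block of positive $\mu$-mass occurs infinitely often along $\gamma$, positivity makes the columns of $A_N(\gamma)$ comparable, so $|\sigma_{[0,N)}(b)|=\|A_N(\gamma)e_b\|_1=e^{N\theta^\mu_1+o(N)}$ uniformly in $b$; hence the depth of a length-$n$ prefix satisfies $N=(\log n)/\theta^\mu_1+o(\log n)$. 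Substituting the two estimates above, the sum is $\le e^{N\theta^\mu_2+o(N)}=n^{\theta^\mu_2/\theta^\mu_1+o(1)}$ (the $O(\log n)$ number of terms and the $e^{o(N)}$ matrix norms being absorbed), which gives $\limsup_n\max_i\log\bigl||u_0\cdots u_{n-1}|_i-nf_i\bigr|/\log n\le \theta^\mu_2/\theta^\mu_1$, uniformly in $u$. For the reverse inequality one evaluates the deviation on the special Birkhoff sums $|\sigma_{[0,N)}(b)|_i-|\sigma_{[0,N)}(b)|f_i=\langle {}^tA_N(\gamma)(e_i-f_i(e_1+\cdots+e_d)),e_b\rangle$ over words $\sigma_{[0,N)}(b)$ that are factors of $u$ (for $N$ just after an occurrence of the positive block): since the vectors $e_i-f_i(e_1+\cdots+e_d)$ span $f(\gamma)^\perp$, maximizing over $i$ and $b$ recovers $\|{}^tA_N(\gamma)|_{f^\perp}\|=e^{N\theta^\mu_2+o(N)}$ on a word of length $e^{N\theta^\mu_1+o(N)}$; passing from these factor deviations to prefix deviations of $u$ (a factor of length $m$ with discrepancy $\delta$ produces a prefix with discrepancy $\ge\delta/2$) yields $\limsup\ge\theta^\mu_2/\theta^\mu_1$. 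This last transfer and its uniformity in $u$ are the technical core; I expect it, together with the estimate $\tfrac1n\log\|{}^tA_n|_{f^\perp}\|\to\theta^\mu_2$, to be the main obstacle, both resting on the \emph{exponential} speed of cone convergence (hence on the Birkhoff contraction coefficient of the positive block being $<1$), and I would invoke the scheme of Zorich, Forni and Bufetov (and Adamczewski in the substitutive case) for the remaining bookkeeping.
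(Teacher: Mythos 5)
Your proposal follows essentially the same route as the paper's own (sketched) proof: establish the Lyapunov estimates on the building blocks $W_n$ (equivalently, the growth rate $\theta^\mu_2$ of $\|{}^tA_n|_{f^\perp}\|$), then decompose an arbitrary word via the Dumont--Thomas prefix-suffix decomposition and sum, deferring the remaining bookkeeping to Zorich, Forni, Bufetov and Adamczewski, and deduce the $\theta^\mu_2<0$ case directly from Theorem~\ref{thm:ness_criterion}. You in fact supply slightly more detail than the paper at the points it asserts without argument (the identification of the rate on $f^\perp$ with $\theta^\mu_2$ via cone convergence and Oseledets, the depth estimate $N\sim\log n/\theta^\mu_1$, and the factor-to-prefix transfer for the lower bound), and these additions are correct.
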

Note that a characterization of balanced words generated by primitive substitutions is given in~\cite[Corollary 15]{Adam0}. It is shown that there exist balanced fixed points of substitutions for which $\theta_2=0$, i.e.,  the incidence matrix of the substitution has an eigenvalue of modulus one. It is even proved that if   a primitive substitution generates  an infinite word  that  is balanced, then all eigenvalues of modulus one of the incidence matrix have to be roots of unity. 
Furthermore, observe that the Thue-Morse word
is  $2$-balanced, but if one considers generalized balances with respect to   factors of length $2$ instead of letters, then it is not balanced anymore: according to the notation of  \cite{Adam0},  one has,  for the  eigenvalue $-1$ of $\sigma_2$,  $A_{\sigma_2,U_2}=1$  (and not   $A_{\sigma_2,U_2}=0$ as  claimed in \cite{Adam0}), which leads  to  (unbounded) imbalances for the factor $aa$.

\begin{rem}
Let us remark that the first part of the theorem might fail for some letter in the alphabet. Namely consider the substitution $\sigma$ on $\{a,b,c\}$ given by $a \mapsto abc,\, b \mapsto bba,\, c \mapsto cca$. Then the eigenvalues of its incidence matrix are $3$, $2$ and $0$. It is easy to show that $b$ and $c$ have discrepancy of order  $n^{\log(2)/\log(3)}$, but the letter $a$ has bounded discrepancy.
In general, for $S$-adic systems, in order to provide a precise asymptotic for each letter,  one needs to care about the  other Lyapunov exponents and the associated Lyapunov flags.
\end{rem}

\begin{proof}
We only sketch the proof which is very similar to what is done in~\cite{Zorich97}, \cite{DelecroixHubertLelievre}, \cite{Adam4} or \cite{delecroixwords}.
According to Theorem~\ref{thm:generic_ue_and_separation}, the existence of the positive path with positive mass implies that $\theta_1^{\mu} > \theta_2^{\mu}$ and that $\mu$-almost every $S$-adic path in $\Sigma_G$ gives a uniquely ergodic $S$-adic system.
In particular, letters have uniform frequencies. Let $(\tau_{\gamma_n})_n$ be a directive sequence of substitutions which satisfies the Oseledets Theorem, $X_{\gamma}$ the associated shift space, and $f$ the associated letter frequency  vector.
Let $W_n = \{\tau_{[0,n)}(a) \mid a \in \AAA \}$ be the set of images of letters by $\tau_{[0,n)} = \tau_0 \ldots \tau_{n-1}$.
Then, by definition of Lyapunov exponents, for every $\epsilon > 0$, for  $n$ large enough, for all $w \in W_n$ and all $i \in \AAA$, we have
\[
\exp(n (\theta_1^\mu-\epsilon) \leq |w| \leq \exp(n (\theta_1^\mu+\epsilon))
\quad \text{and} \quad
| |w|_i - |w| f_i| \leq \exp(n (\theta_2^\mu+\epsilon)).
\]
In particular, we get that for $w \in W_n$ with $n$ large enough
\[
\frac{\log (|w|_i- |w| f_i|)}{\log(|w|)} \leq \frac {\theta_2^\mu+\epsilon}{\theta_1^\mu-\epsilon}.
\]
This proves that the theorem holds for the elements of $W_n$. Now any word in $X$ may be decomposed with respect to the building blocks $W_n$ (according to the Dumont-Thomas prefix-suffix decomposition \cite{DT89}). From that decomposition, it remains to perform a summation to obtain the theorem.

The second part follows from Theorem~\ref{thm:ness_criterion}.
\end{proof}

Recall that, a substitution is said \emph{irreducible Pisot} if the characteristic polynomial of its incidence matrix is the minimal polynomial of a Pisot number, that is, a real algebraic integer larger than $1$ whose other Galois conjugates are smaller than $1$ in modulus. We thus say that $(G, \sigma, \mu)$ satisfies the \emph{Pisot condition} if 
\[
\theta_1 ^{\mu}> 0 > \theta_2 ^{\mu}.
\]
By Theorem~\ref{thm:deviation_Birkhoff_averages}, an $S$-adic graph  (satisfying the general   assumptions of the theorem) endowed with a measure $\mu$ such that the Lyapunov exponents satisfy the Pisot condition is such that for $\mu$-almost every $\gamma$, the associated $S$-adic system $X_G(\gamma)$ is made of balanced words.
This property is known to hold for some continued fraction algorithms endowed with their absolute continuous measure: the standard continued fractions, the Brun algorithm in dimension 3~\cite{Brun}, the Jacobi-Perron algorithm in dimension 3~\cite{BroiseGuivarch}. Some more precise results that hold for \emph{all} measures are proven in~\cite{AvilaDelecroix}.

One reason for the importance of the Pisot condition is due to  the Pisot conjecture. Recall that irreducible Pisot substitutions are conjectured to have pure discrete spectrum.  Pisot substitutive dynamical systems are thus expected to be measure-theoretically isomorphic to a translation on a compact Abelian group. In~\cite{BST} Rauzy fractals are associated with Pisot $S$-adic systems, and pure discrete spectrum is proved under a generalized geometric coincidence condition. This applies to Brun and Arnoux-Rauzy continued fractions: almost every of these expansions has pure discrete spectrum.

\section*{Acknowledgements}
We would like to thank warmly  S\'ebastien Labb\'e, Julien Leroy, Thierry Monteil, N.~Pytheas Fogg  and  Wolfgang Steiner  for their careful reading and stimulating discussions.


\begin{thebibliography}{99}
%

\bibitem{Adam0} 
Adamczewski, B.,
Balances for fixed points of primitive substitutions,
 \textit{Theoret. Comput. Sci.}, \textbf{307} (2003), 47--75. 

\bibitem{Adam2}
Adamczewski, B., 
Codages de rotations et ph\'enom\`enes d'autosimilarit\'e, \textit{J. Th\'eor. Nombres Bordeaux}, \textbf{14} (2002), 351--386. 



\bibitem{Adam3}
Adamczewski, B.,
R\'epartition des suites $(n\alpha)_{ n \in {\mathbb N}}$ et substitutions,
\textit{ Acta Arith.}, \textbf{ 112} (2004), 1--22.

\bibitem{Adam4}
Adamczewski B.,
Symbolic discrepancy and self-similar dynamics,
\textit{Ann. Inst. Fourier (Grenoble)},
\textbf{54} (2004), 2201--2234.

\bibitem{AD}
Adamczewski, B. and Damanik, D.,
Linearly recurrent circle map subshifts and an application to Schr\"odinger operators,
\textit{Ann. Henri Poincar\'e}, \textbf{3} (2002), 1019--1047. 

\bibitem{AKI}
Akiyama S. and Shirasaka, M.,
Recursively renewable words and coding of irrational rotations,
\textit{Journal of Mathematical Society of Japan}, \textbf{ 59} (2007), 1199--1234.

\bibitem{AC}
Aliste-Prieto, J. and Coronel, D.,
Tower systems for linearly repetitive Delone sets,
\textit{Ergodic Theory and Dynamical Systems}, \textbf{31} (2011), 1595--1618. 

\bibitem{Arnoux-modulaire}
Arnoux, P.,
Le codage du flot g\'eood\'esique sur la surface modulaire
Enseign. Math. 40 (1994), 29--48.

\bibitem{AF1}
Arnoux, P. and Fisher, A. M.,
The scenery flow for geometric structures on the torus: the linear setting, 
\textit{Chinese Ann. Math. Ser. B}, \textbf{22} (2001), 427--470. 

\bibitem{AF2}
Arnoux, P. and Fisher, A. M.,
Anosov families, renormalization and nonstationary subshifts, 
\textit{Erg. Th. Dyn. Syst.}, \textbf{25} (2005), 661--709.

\bibitem{AFMP}
Ambro\v{z}, P., Frid. A, Mas\'akov\'a Z. and Pelantov\'a, E.,
On the number of factors in codings of three interval exchange,
\textit{Discrete Math. and Theoretical Computer Science}
\textbf{13}, No 3 (2011), 51--66

\bibitem{arnito}
Arnoux, P. and Ito S.,
Pisot substitutions and Rauzy fractals, 
\textit{Bull. Belg. Math. Soc. Simon Stevin},
{\textbf 8} (2001), 181--207. 

\bibitem{ArnouxHubert}
Arnoux, P. and Hubert, P.,
Algorithmes de fractions continues sur les surfaces de Veech,
J. Analyse math., 81 (2000), 35--64.

\bibitem{AR91}
Arnoux, P. and Rauzy, G.,
Repr\'esentation g\'eom\'etrique de suites de complexit\'e {$2n+1$},
\textit{Bull. Soc. Math. France}, \textbf{119} (1991), 199--215.
 
\bibitem{AS} 
Arnoux, P. and Starosta, \v{S}.,
The Rauzy gasket,
\textit{Further Developments in Fractals and Related Fields}, to appear.

\bibitem{ABI}
Arnoux, P., Berth\'e, V. and Ito, S., 
Discrete planes, ${\mathbb Z}^2$-actions, Jacobi-Perron algorithm and substitutions,
\textit{ Ann. Inst. Fourier} (Grenoble) \textbf{52} (2002), 305--349.

\bibitem{afh}
Arnoux, P., Ferenczi S. and Hubert, P.,
Trajectories of rotations, 
\textit{Acta Arith.}, \textbf{87} (1999), 209--217.

\bibitem{AthreyaForni}
Athreya J., Forni G.,
Deviations of ergodic averages for rational polygonal billiards,
\textit{Duke Math. J.}, \textbf{144} (2007), 285--319.

\bibitem{Aubrun}
Aubrun, N. and Sablik, M.,
Multidimensional effective S-adic systems are sofic,
arXiv:1103.0895.

\bibitem{Avila}
Avila, A. and Forni, G., 
Weak mixing for interval exchange transformations and translation flows,
\textit{Ann. of Math.} (2), \textbf{165} (2007), 637--664. 

\bibitem{AvilaDelecroix}
Avila A. and Delecroix V.,
Lyapunov exponents of several continued fraction algorithms,
\textit{preprint}.

\bibitem{Baladi} 
Baladi, V. and Nogueira, A.,
Lyapunov exponents for non-classical multidimensional continued fraction algorithms,
\textit{Nonlinearity}, \textbf{ 9} (1996), 1529--1546.

\bibitem{Baldwin}
Baldwin, P. R.,
 A convergence exponent for multidimensional continued-fraction algorithms,
 \textit{ J. Statist. Phys.}, \textbf{ 66} (1992), 1507--1526.

\bibitem{BBLT}
Barat, G., Berth\'e, V., Liardet, P. and Thuswaldner, J.,
Dynamical directions in numeration,
\textit{ Ann. Inst. Fourier} (Grenoble), \textbf{56} (2006), 1987--2092.
 
\bibitem{BerstelPocchiola}
Berstel, J. and Pocchiola, M.,
A geometric proof of the enumeration formula for Sturmian words,
Internat. J. Algebra Comput.
\textbf{3} (1993), 349--355.

\bibitem{Integers}
Berth\'e, V., 
Multidimensional Euclidean algorithms, numeration and substitutions
\textit{Integers } \textbf{11B} (2011) A2. 

\bibitem{BertheLabbe}
Berth\'e, V. and Labb\'e, S.,
Convergence and Factor Complexity of the Arnoux-Rauzy-Poincaré Algorithm,
LNCS {\bf 8079}, pp. 71--82, 2013. Words 2013.

\bibitem{BuzziSarig}
Buzzi, J. and Sarig, O.,
Uniqueness of equilibrium measures for countable Markov shifts and multi-dimensional piecewise expanding maps
\textit{Erg. Thy. Dynam. Sys.} \textbf{23}, (2003) 1383--1400.

\bibitem{CANT}
Berth{\'e}, V. and Rigo, M., (Eds.),
Combinatorics, Automata and Number Theory,
\textit{Encyclopedia Math. Appl.}, vol. 135, Cambridge University Press, Cambridge, 2010. 

\bibitem{BCS}
Berth\'e, V., Cassaigne, J. and Steiner, W.,
Balance properties of Arnoux-Rauzy words,
\textit{International Journal of Algebra and Computation}, \textbf{23} (2013), 689--704.

\bibitem{BFZ}
Berth{\'e}, V., Ferenczi, S. and Zamboni, L.~Q.,
Interactions between dynamics, arithmetics and combinatorics: the good, the bad, and the ugly,
\textit{Algebraic and topological dynamics}, \textit{Contemp. Math.}, vol.~385, Amer. Math.
 Soc., Providence, RI, 2005, pp.~333--364.

\bibitem{ice}
Berth{\'e}, V., Holton, C. and Zamboni, L.~Q.,
\textit{ Initial powers of {S}turmian sequences},
Acta Arith., \textbf{122}, (2006) 315--347.

\bibitem{BJS}
Berth\'e, V., Jolivet, T. and Siegel, A.,
Substitutive Arnoux-Rauzy sequences have pure discrete spectrum,
Uniform distribution theory, 7 (2012) 173-197

\bibitem{BST}
Berth\'e, V., Steiner, W. and Thuswaldner, J.,
$S$-adic tiles,
{\it preprint} (2013).
 
\bibitem{Ei}
Berth\'e, V., Ei, H., Ito, Sh., Rao, H.,
On substitution invariant Sturmian words: an application of Rauzy fractals,
\textit{Theor. Inform. Appl.}, \textbf{ 41} (2007), 329--349. 

\bibitem{Bezugly}
Bezuglyi, S., Kwiatkowski, J., Medynets, K.,
Aperiodic substitution systems and their Bratteli diagrams,
\textit{ Ergodic Theory Dynam. Systems}, \textbf{29} (2009), 37--72. 

\bibitem{Bezugly2}
Bezuglyi, S., Kwiatkowski, J., Medynets, K., and Solomyak, B.,
Invariant measures on stationary Bratteli diagrams,
\textit{ Ergodic Theory Dynam. Systems}, \textbf{ 30} (2010), 973--1007.
 
\bibitem{Bezugly3}
Bezuglyi, S., Kwiatkowski, J., Medynets, K., and Solomyak, B,
Finite rank Bratteli diagrams: structure of invariant measures,
\textit{Trans. Amer. Math. Soc.}, \textbf{365} (2013), 2637--2679. 


\bibitem{Birk}Birkhoff, G., 
Extensions of Jentzsch's theorem.
\textit{Trans. Amer. Math. Soc.},
\textbf{ 85} (1957), 219--227.
 
\bibitem{Bosh84}
Boshernitzan, M.,
A unique ergodicity of minimal symbolic flows with linear block growth,
\textit{ J. Analyse Math.}, \textbf{ 44} (1984/85), 77--96.

\bibitem{Bosh92}
Boshernitzan, M.,
A condition for unique ergodicity of minimal symbolic flows,
\textit{Ergodic Th. \& Dynam. Sys}, \textbf{ 12} (1992), 425--428.

\bibitem{Bosh}
Boshernitzan, M. D., Carroll, C. R., 
An extension of Lagrange's theorem to interval exchange transformations over quadratic fields, 
\textit{J. Anal. Math.}, \textbf{ 72} (1997), 21--44. 

\bibitem{BougerolLacroix}
Bougerol P., Lacroix J.,
Product of random matrices with applications to Schr\"odinger operators
Birkha\"user, (1985).

\bibitem{BRENTJES}
Brentjes, A.~J.,
{\textit Multidimensional continued fraction algorithms},
Mathematisch Centrum, Amsterdam, 1981.

\bibitem{BBH}
 Bressaud, X., Bufetov, A. I. and Hubert , P.,
 Deviation of ergodic averages for substitution dynamical systems with eigenvalues of modulus one,
arXiv:1106.2666.

\bibitem{Bressaud}
Bressaud, X., Durand, F. and Maass, A.,
Necessary and sufficient conditions to be an eigenvalue for linearly recurrent dynamical Cantor systems,
\textit{ J. London Math. Soc.}, \textbf{72} (2005), 799--816. 

\bibitem{Bressaud2}
Bressaud, X., Durand, F. and Maass, A., 
On the eigenvalues of finite rank Bratteli-Vershik dynamical systems,
\textit{Ergodic Theory Dynam. Systems}, \textbf{ 30} (2010), 639--664.

\bibitem{BroiseGuivarch}
Broise-Alamichel, A. and Guivarc'h, Y., 
Exposants caract\'eristiques de l'algorithme de Jacobi-Perron et de la transformation associ\'ee,
\textit{Ann. Inst. Fourier (Grenoble)}, {\textbf 51} (2001), 565--686.

\bibitem{Bufetov}
Bufetov, A. I.,
Limit theorems for translation flows,
{\em to appear in Annals of Math.} (2012), arXiv:1212.5574.

\bibitem{can_sie}
Canterini, V. and Siegel, A., 
Automate des pr\'efixes-suffixes associ\'e \`a une substitution primitive,
{\em J. Th\'eor. Nombres Bordeaux}, \textbf{13} (2001), 353--369.

\bibitem{can_sie1}
Canterini, V. and Siegel, A., 
Geometric representation of substitutions of {P}isot type,
{\em Trans. Amer. Math. Soc.}, {\bf 353} (2001), 5121--5144.

\bibitem{CA1}
Cassaigne, J.,
Special factors of sequences with linear subword complexity,
{\em Developments in Language Theory~II (DLT'95), Magdeburg (Allemagne)},
World Sci., Singapore, 1996, pp.~25--34.

\bibitem{CFM08}
Cassaigne, J., Ferenczi, S. and Messaoudi, A.,
Weak mixing and eigenvalues for {A}rnoux-{R}auzy sequences,
\textit{Ann. Inst. Fourier} (Grenoble), \textbf{58} (2008), 1983--2005.

\bibitem{CFZ00}
Cassaigne, J., Ferenczi, S. and Zamboni, L. Q .,
Imbalances in {A}rnoux-{R}auzy sequences,
\textit{Ann. Inst. Fourier} (Grenoble), \textbf{ 50} (2000), 1265--1276.		

\bibitem{CassaignePetrovFrid}
Cassaigne J., Petrov F. V., Frid Anna E.
On possible growths of Toeplitz languages (Russian),
Sibirsk. Mat. Zh. \textbf{52} (2011), no. 1, 81--94;
translation in Sib. Math. J. \textbf{52}, no. 1 (2011), 63--73.

\bibitem{Cortez}
Cortez, M. I. , Durand, F., Host, B. and Maass, A.,
Continuous and measurable eigenfunctions for linearly recurrent dynamical Cantor systems,
\textit{ J. London Math. Soc.}, \textbf{ 67} (2003), 790--804.

\bibitem{Da}
Dartnell, P., Durand, F., Maass, A.,
Orbit equivalence and Kakutani equivalence with Sturmian subshifts,
\textit{Studia Math.}, \textbf{ 142} (2000), 25--45. 

\bibitem{delecroix}
Delecroix, V.,
S-adic graphs and their Lyapunov exponents: global complexity of languages, deviations of Birkhoff sums and balancedness of languages,
Journ\'ees Montoises 2012.

\bibitem{delecroix-comp}
Delecroix, V.,
Global complexity of substitution systems,
in progress.

\bibitem{delecroixwords} Delecroix, V. , Hejda, T. and Steiner, W., Balancedness of Arnoux-Rauzy and Brun words, LNCS {\bf 8079}, 2013. Words 2013.

\bibitem{DelecroixHubertLelievre}
Delecroix, V., Hubert, P. and Leli\`evre, S.,
Diffusion for the periodic wind-tree model,
preprint, arXiv:1107.1810v2, {\it Annales E.N.S.}, to appear.

\bibitem{Deviatov}
Deviatov, R., 
On subword complexity of morphic sequences, Computer science--theory and applications, 
{\it Lecture Notes in Comput. Sci.}, {\bf 5010}, Springer, Berlin, (2008), 146--157.

\bibitem{Didier}
Didier, G., 
Codages de rotations et fractions continues,
\textit{J. Number Theory}, \textbf{71} (1998), 275--306. 

\bibitem{DT89}
Dumont, J.-M. and Thomas, A., 
Syst\`emes de num\'eration et fonctions fractales relatifs aux
 substitutions,
 {\em Theoret. Comput. Sci.}, {\bf 65} (1989), 153--169.

\bibitem{Dur98}
Durand, F., 
A characterization of substitutive sequences using return words,
{\it Discrete Math.}, {\bf 179} (1998), 89--101.

\bibitem{Dur03}
Durand, F.,
Linearly recurrent subshifts have a finite number of non-periodic subshift factors,
{\it Ergodic Theory Dynam. Systems}, {\bf 20} (2000), 1061--1078.
Corrigendum and addendum, {\it Ergodic Theory Dynam. Systems}, {\bf 23} (2003), 663--669.

\bibitem{DL}
Durand,~F. and Leroy~, J., 
$S$-adic conjecture and Bratteli diagrams,
\textit{C. R. Math. Acad. Sci. Paris}, \textbf{ 350} (2012), 979--983.

\bibitem{DHS99}
Durand, F., Host, B. and Skau, C., 
Substitutional dynamical systems, {B}ratteli diagrams and dimension groups,
{\em Ergodic Theory Dynam. Systems}, \textbf{19} (1999), 953--993.

\bibitem{DLR}
Durand,~F., Leroy~J. and Richomme~G.,
Do the properties of an S-adic representation determine factor complexity,
\textit{Journal of Integer Sequences},
\textbf{16} (2013), Article 13-2-6.



\bibitem{Forni02} Forni, G., 
Deviation of ergodic averages for area-preserving flows on surfaces of higher genus,
\textit{Ann. of Math. (2)}, \textbf{155} (2002), 1--103. 


\bibitem{FBRS}
Ferenczi, S., Bounded remainder sets,
\textit{Acta Arith.}, \textbf{ 61} (1992), 319--326. 

\bibitem{frank}
Ferenczi, S.,
Rank and symbolic complexity,
\textit{ Ergodic Theory Dynam. Systems},
{\textbf 16} (1996), 663--682.

\bibitem{Ferenczicomp}
Ferenczi, S.,
Complexity of sequences and dynamical systems, 
\textit{ Discrete Math.}, \textbf{206 }(1999), 145--154.
 
\bibitem{frank2}
Ferenczi, S., 
Systems of finite rank,
\textit{Colloq. Math.}, \textbf{73} (1997), 35--65. 

\bibitem{Ferenczi-2ngon}
Ferenczi, S.,
Billiards in regular 2n-gons and the self-dual induction.
J. Lond. Math. Soc., II. \textbf{87}, No. 3 (2013) 766--784.

\bibitem{FerRocha}
Ferenczi, S. and da Rocha, L. F., 
A self-dual induction for three-interval exchange transformations,
\textit{ Dyn. Syst.}, {\bf 24} (2009), 393--412. 

\bibitem{Ferzam}
Ferenczi, S. and Zamboni, L. Q.,
Structure of K-interval exchange transformations: induction, trajectories, and distance theorems,
\textit{J. Anal. Math.}, {\bf 112} (2010), 289--328. 

\bibitem{FFT}
Ferenczi, S., Fisher, A., Talet, M., 
Minimality and unique ergodicity for adic transformations,
\textit{J. Anal. Math.}, \textbf{ 109} (2009), 1--31. 

\bibitem{FHZ1}
Ferenczi, S., Holton, C. and 
Zamboni, L. Q., Structure of three interval exchange transformations. I. An arithmetic study, 
\textit{Ann. Inst. Fourier} (Grenoble), \textbf{ 51} (2001), 861--901. 

\bibitem{FHZ2}
Ferenczi, S., Holton, C. and Zamboni, L. Q., 
Structure of three-interval exchange transformations II: a combinatorial description of the trajectories,
\textit{ J. Analyse Math.}, {\bf 89} (2003), 239--276.
 
\bibitem{FHZ3}
Ferenczi, S., Holton, C. and Zamboni, L. Q., 
Structure of three-interval exchange transformations III: Ergodic and spectral properties,
\textit{J. Anal. Math.} \textbf{ 93} (2004), 103--138.

\bibitem{Fernique}
Fernique, T.,
Bidimensional Sturmian sequences and substitutions,
\textit{Developments in language theory}, 236--247, Lecture Notes in Comput. Sci., 3572, Springer, Berlin, 2005. 

\bibitem{Fisher}
Fisher, A. M.,
Nonstationary mixing and the unique ergodicity of adic transformation,
\textit{Stoch. Dyn.}, \textbf{9} (2009), 335--391.


\bibitem{Brun}
Fujita, T., Ito, S., Keane, M. and Ohtsuki, M.,
 On almost everywhere exponential convergence of the modified Jacobi-Perron algorithm: a corrected proof,
 \textit{ Ergodic Theory Dynam. Systems}, {\textbf 16} (1996), 1345--1352.
\bibitem{Furman}
Furman, A.,
Random walks on groups and random transformations,
\textit{Handbook of Dynamical Systems}, Vol. 1A, 931--1014, North-Holland, Amsterdam, 2002. 
 
\bibitem{Furs}
Furstenberg, H., 
Noncommuting random products,
\textit{Trans. Amer. Math. Soc.}, \textbf{108} (1963), 377--428. 

\bibitem{Furstenberg:60}
Furstenberg, H.,
Stationary processes and prediction theory,
\textit{Annals of Mathematics Studies},
vol. 44, Princeton University Press, Princeton, N.J., 1960.

\bibitem{Gahler}
G\"ahler, F. and Maloney, G. R.,
Cohomology of One-Dimensional Mixed Substitution Tiling Spaces,
\textit{Topology Appl.}, {\textbf 160} (2013), 703--719.


\bibitem{GPS}
Giordano, T., Putnam, I. and Skau, C.,
Topological orbit equivalence and $C^*$-crossed products,
\textit{ Internat. J. Math.}, \textbf{ 469} (1995), 51-111.

\bibitem{GJ09}
Glen, A. and Justin, J.,
Episturmian words: a survey,
\textit{ Theor. Inform. Appl.} {\bf 43}, (2009) 403--442.

\bibitem{Grillenberger72}
Grillenberger, C., 
Constructions of strictly ergodic systems. I. Given entropy., 
\textit{ Z. Wahrscheinlichkeitstheorie und Verw. Gebiete}, {\bf 25} (1972/73), 323--334. 

\bibitem{Hajnal}
Hajnal, J., On products of non-negative matrices,
\textit{Math. Proc. Cam- bridge Philos. Soc.}, \textbf{79} (1976), 521--530.

\bibitem{HPS}
Herman, R. H., Putnam, I. and Skau, C.,
Ordered Bratteli diagrams, dimension groups, and topological dynamics,
\textit{Int. J. Math.}, \textbf{ 3} (1992), 827--864.

\bibitem{Hua}
Hua, S., Rao, H., Wen, Z. and Wu, J.,
On the structures and dimensions of Moran sets,
\textit{Sci. China Ser. A},
\textbf{43} (2000), 836--852.

\bibitem{Jullian-iet}
Jullian, Y.,
An algorithm to identify automorphisms which arise from self-induced interval exchange transformations,
\textit{Math. Z.}, {\bf 274} (2013), 33--55. 

\bibitem{Keane75}
Keane, M.,
Interval exchange transformations,
\textit{Math. Zeitschr.} \textbf{141} (1975) 77--102.

\bibitem{Keane}
Keane, M., 
Non-ergodic interval exchange transformations,
\textit{Israel J. Math.} \textbf{ 26} (1977), 188--196. 

\bibitem{Lagarias}
Lagarias, J. C.,
The quality of the Diophantine approximations found by the Jacobi-Perron algorithm and related algorithms,
\textit{ Monatsh. Math.}, \textbf{ 115} (1993), 299--328.

\bibitem{Leroy} Leroy, J., 
Some improvements of the S-adic conjecture,
\textit{Adv. in Appl. Math.}, \textbf{48} (2012), 79--98. 


\bibitem{Leroy13} Leroy, J., 
An $S$-adic characterization of minimal subshifts with first difference of complexity $1 \leq p(n+1) - p(n) \leq 2$, 
arXiv:1305.0434.


\bibitem{LR}
 Leroy, J. and Richomme, G.,
A combinatorial proof of S-adicity for sequences with sub-affine complexity,
\textit{preprint}, 2011.

\bibitem{LRi}
 Lev\'e, F.  and   Richomme, G., 
 Quasiperiodic Sturmian words and morphisms, 
 \textit{Theoret. Comput. Sci.}, {\textbf  372} (2007), 15--25.

\bibitem{Lind}
Lind, D. and Marcus, B.,
\textit{An introduction to symbolic dynamics and coding},
Cambridge University Press, 1995.

\bibitem{Lipatov}
Lipatov, E. P.,
A classification of binary collections and properties of homogeneity classes,
Problemy Kibernet.,
\textbf{39}, (1982) 67--84.

\bibitem{Lothaire:2002}
Lothaire, M.,
\emph{Algebraic Combinatorics on Words}, vol.~90, 
Encyclopedia of Mathematics and Its Applications.
Cambridge University Press, 2002.

\bibitem{Marmi}
Marmi, S., Moussa, P. and Yoccoz, J.-C., 
The cohomological equation for Roth-type interval exchange maps,
\textit{J. Amer. Math. Soc.} \textbf{ 18} (2005), 823--872.

\bibitem{Masur}
Masur, H.,
Interval exchange transformations and measured foliations,
\textit{ Annals of Math.}, \textbf{ 115} (1982), 169--200.

\bibitem{Monteil}
Monteil, T., Private communication.

\bibitem{MU}
Mauldin, R. D. and Urba\'nski, M.,
Dimensions and Measures in Infinite Iterated Function Systems,
\textit{Proc. London Math. Soc.}, {\bf 73} (1996), 105--154. 
 
\bibitem{MH2}
Morse, M. and Hedlund, G. A.,
Symbolic dynamics II. Sturmian trajectories, \textit{ Amer. J. Math.}, \textbf{ 62} (1940), 1--42.

\bibitem{Mignosi}
Mignosi, F.,
On the number of factors of {S}turmian words,
Theoret. Comput. Sci.,
\textbf{82} (1991), 71--84.

\bibitem{Mozes}
Mozes, S.,
{Tilings, substitutions systems and dynamical systems generated by them},
J. {d'Analyse} Math., 
{\bf 53} (1989), 139--186.

 \bibitem{ose} Oseledec, V.I., 
The spectrum of ergodic
 automorphisms,
 \textit{Dokl. Akad. Nauk. SSSR}, {\bf 168} (1966), 1009--1011.

\bibitem{Pansiot}
Pansiot, J.-J.,
Complexit\'e des facteurs des mots infinis engendr\'es par morphismes it\'er\'es,
\textit{ Lecture Notes in Comput. Sci.}, \textbf{172}, Springer, Berlin (1984), 380--389.

\bibitem{pyt} 
Pytheas Fogg, N.,
{\textit Substitutions in dynamics, arithmetics and combinatorics},
Lecture Notes in Math., Springer-Verlag, {\bf 1794} (2002).

\bibitem{pyt2}
Pytheas Fogg, N.,
{\url{http://www.lirmm.fr/arith/wiki/PytheasFogg/S-adique}}, 2009.

\bibitem{PS11}
Priebe~Frank N. and Sadun, L.,
Fusion: a general framework for hierarchical tilings of $\mathbb{R}^d$,
\textit{preprint}, 2011.

\bibitem{Queffelec:10}
Queff{\'e}lec, M.,
\emph{Substitution dynamical systems---spectral analysis},
second ed., Lecture Notes in Mathematics, Springer-Verlag, Berlin, \textbf{1294} (2010).

\bibitem{ra0}
Rauzy, G.,
Une g\'en\'eralisation du d\'eveloppement en fraction continue,
S\'eminaire Delange-Pisot-Poitou, 1976/1977,
Th\'eorie des Nombres, Fasc. 1,
Exp. No.15,16, Paris, 1977.



\bibitem{ra1}
Rauzy, G.,
\'Echanges d'intervalles et transformations induites,
\textit{Acta Arith.}, {\bf 34} (1979), 315--328.



\bibitem{Santini}
Santini-Bouchard, M.-L.,
\'Echanges de trois intervalles et suites minimales, 
\textit{Theoret. Comput. Sci.}, {\bf 174} (1997), 171--191. 

\bibitem{Sarig}
Sarig, O. M., 
Thermodynamic Formalism for Countable Markov shifts,
\textit{Erg. Th. Dyn. Sys.} \textbf{19}, (1999) 1565--1593.

\bibitem{Schweiger95}
Schweiger, F.,
\textit{Ergodic theory of fibered systems and metric number theory},
Oxford Science Publications, 1995.
 
\bibitem{Schweiger00}
Schweiger, F.,
\textit{Multidimensional continued fractions},
Oxford Science Publications, Oxford University Press,
Oxford, 2000.
 
\bibitem{Seneta}
Seneta E.,
\textit{Non-negative matrices and Markov chains},
Springer Series in Statistics, Springer-Verlag, New York, 1981.

\bibitem{Sid}
Sidorov, N.,
Arithmetic dynamics, {\it Topics in dynamics and ergodic theory}, pp.145--189,
 London Math. Soc. Lecture Note Ser., vol. 310, Cambridge Univ. Press, Cambridge, 2003.

\bibitem{SieThus}
Siegel A. and Thuswaldner J.,
Topological properties of Rauzy fractals,
\textit{M\'em. Soc. Math. Fr.}, {\textbf 118} (2009).

\bibitem{SmillieUlcigrai}
Smillie, J. and Ulcigrai, C., 
Beyond Sturmian sequences: coding linear trajectories in the regular octagon,
\textit{Proc. Lond. Math. Soc.}, \textbf{102} (2011),  291--340. 

\bibitem{tij}
Tijdeman, R.,
The chairman assignment problem,
\textit{Discrete Math.}, \textbf{32} (1980), 323--330. 
 
\bibitem{Veech}
Veech, W. A.,
Gauss measures for transformations on the space of interval exchange maps.
\textit{Annals of Math.}, \textbf{115} (1982), 201--242.

\bibitem{Ver82}
Vershik, A., 
A theorem on periodical Markov approximation in ergodic theory,
\textit{ J. Soviet Math.}, \textbf{28} (1985), 667--673.


\bibitem{Viana}
Viana, M.,
Ergodic theory of interval exchange maps,
\textit{Rev. Mat. Complut.}, \textbf{ 19} (2006), 7--100. 

\bibitem{Walters}
Walters, P.,
An introduction to ergodic theory,
\textit{Graduate Texts in Mathematics}, \textbf{79}, Springer-Verlag, New York-Berlin, 1982.

\bibitem{Wen}
Wen, Z.- X. and Wen, Z.-Y.,
Remarques sur la suite engendr\'ee par des substitutions compos\'ees,
\textit{Ann. Fac. Sci. Toulouse Math. (5)}, {\bf 9} (1988), 55--63.

\bibitem{Yoccoz}
Yoccoz J.-C. ,
Interval exchange maps and translation surfaces. Homogeneous flows, moduli spaces and arithmetic, pp. 1--69, \textit{Clay Math. Proc.}, \textbf{10}, Amer. Math. Soc., Providence, RI, 2010. 
\bibitem{Zorich96}
Zorich, A.,
Finite Gauss measure on the space of interval exchange transformations. Lyapunov exponents,
{\it Ann. Inst. Fourier (Grenoble)}, {\bf 46} (1996),325--370.

\bibitem{Zorich97}
Zorich, A.,
Deviation for interval exchange transformations,
\textit{Erg. Th. and Dyn. Syst.}, \textbf{17} (1997), 1477--1499.
\end{thebibliography}
\end{document}